\newcommand{\R}{\mathbb R}
\newtheorem{thm}{Theorem}[section]
\newtheorem{lem}[thm]{Lemma}
\newtheorem{prop}[thm]{Proposition}
\newtheorem*{thma}{Theorem A}
\newtheorem*{thmb}{Theorem B}
\newtheorem*{thmc}{Theorem C}
\DeclareMathOperator{\Real}{Re}
\DeclareMathOperator{\E}{\mathcal{E}}
\DeclareMathOperator{\PSL}{\textrm{PSL}}
\DeclareMathOperator{\AdS}{\textrm{AdS}^3}
\theoremstyle{definition}
\newtheorem{defn}[thm]{Definition}
\newtheorem{remark}[thm]{Remark}
\begin{document}
\title[Almost strict domination and AdS 3-manifolds]{Almost strict domination and Anti-de Sitter 3-manifolds}
\author{Nathaniel Sagman}
\address{Nathaniel Sagman: University of Luxembourg, 2 Av. de l'Universite, 4365 Esch-sur-Alzette, Luxembourg.} \email{nathaniel.sagman@uni.lu}

\begin{abstract}
We define a condition called almost strict domination for pairs of representations $\rho_1:\pi_1(S_{g,n})\to \PSL(2,\mathbb{R})$, $\rho_2:\pi_1(S_{g,n})\to G$, where $G$ is the isometry group of a Hadamard manifold, and prove it holds if and only if one can find a $(\rho_1,\rho_2)$-equivariant spacelike maximal surface in a certain pseudo-Riemannian manifold, unique up to fixing some parameters. The proof amounts to setting up and solving an interesting variational problem that involves infinite energy harmonic maps. Adapting a construction of Tholozan, we construct all such representations and parametrize the deformation space. 

When $G=\PSL(2,\mathbb{R})$, an almost strictly dominating pair is equivalent to the data of an anti-de Sitter 3-manifold with specific properties. The results on maximal surfaces provide a parametrization of the deformation space of such $3$-manifolds as a union of components in a $\PSL(2,\mathbb{R})\times \PSL(2,\mathbb{R})$ relative representation variety.
\end{abstract}
\maketitle

\begin{section}{Introduction}
Recently there has been a growing interest in Lorentzian geometry in low dimensions, with a focus on Lorentzian space forms. Among the most important space forms is the anti-de Sitter space, which has constant curvature $-1$. This study was originally motivated by general relativity, where anti-de Sitter metrics are solutions to the Einstein equations, and now there are more modern applications in physics (see the paper of Witten \cite{Wi}, for instance). Another direction is the study of properly discontinuous group actions on Lorentzian space forms, or more generally on Clifford-Klein forms, which has its roots in some famous conjectures about affine geometry.  

Some aspects of the theory of properly discontinuous actions are well understood. However, in our previous paper \cite{S} we found anti-de Sitter structures on Seifert-fibered $3$-manifolds that fall outside the scope of the usual theory. This current paper has grown out of an attempt to elaborate on the situation from \cite{S}. In the end, we have found a general property for representations $\rho_1:\pi_1(S_{g,n})\to \PSL(2,\mathbb{R})$, $\rho_2:\pi_1(S_{g,n})\to G$, where $G$ is the isometry group of a Hadamard manifold $(X,\nu)$. 
\begin{subsection}{On AdS $3$-manifolds}
In dimension $3$, anti-de Sitter space (from now on, $\AdS$) identifies isometrically with the Lie group $\PSL(2,\mathbb{R})$, equipped with (a constant multiple of) its Lorentzian Killing metric. The space and time orientation preserving component of the isometry group is $\PSL(2,\mathbb{R})\times\PSL(2,\mathbb{R})$, where the action is by left and right multiplication: $$(g,h)\cdot x = gxh^{-1}.$$ An AdS $3$-manifold is a Lorentzian $3$-manifold of constant curvature $-1$. Equivalently, such a manifold is locally isometrically modelled on $\PSL(2,\mathbb{R})$.

If an AdS $3$-manifold is geodesically complete, meaning geodesics run for all time, then it comes from a proper quotient of $\widetilde{\PSL}(2,\mathbb{R})$ with respect to the lift of the action above. Goldman showed that the space of closed AdS $3$-manifolds is larger than originally expected \cite{G}, and Kulkarni and Raymond took up the problem of understanding all closed and geodesically complete AdS $3$-manifolds \cite{KR}. Among other things, they proved \cite[Theorem 5.2]{KR} that any torsion-free discrete group acting properly discontinuously on $\AdS$ is of the form 
\begin{equation}\label{form}
    \Gamma_{\rho_1,\rho_2} = \{(\rho_1(\gamma),\rho_2(\gamma)) : \gamma\in \Gamma\},
\end{equation}
where $\Gamma$ is the fundamental group of a (not necessarily compact) surface, and $\rho_1,\rho_2:\Gamma\to \PSL(2,\mathbb{R})$ are representations, with at least one of them Fuchsian (discrete and faithful). This is generalized for actions on rank $1$ Lie groups in \cite{K}.
\begin{remark}
Shortly after, Klingler \cite{Kl} proved that closed Lorentzian manifolds of constant curvature are geodesically complete. Thus, the completeness assumption can be dropped in the work of Kulkarni and Raymond on closed AdS $3$-manifolds.
\end{remark}
The natural next step is to understand which $\Gamma_{\rho_1,\rho_2}$ act properly discontinuously. In the cocompact case, Salein observed it is sufficient \cite{Sa2}, and Kassel proved it is necessary \cite{Ka2} that $\rho_1$ strictly dominates $\rho_2$ (defined below). Gu{\'e}ritaud and Kassel extended these results to surfaces with punctures and higher dimensional hyperbolic spaces \cite{GK}.
\begin{defn}
Let $\Gamma$ be a discrete group and let $\rho_k : \Gamma \to \textrm{Isom}(X_k,g_k)$ be representations into isometry groups of Riemannian manifolds $(X_k, g_k)$, $k=1,2$. $\rho_1$ (strictly) dominates $\rho_2$ if there is a $(\rho_1,\rho_2)$-equivariant $1$-Lipschitz ($\lambda$-Lipschitz, $\lambda<1$) map $f: X_1\to X_2$.
\end{defn}
A mapping as above is $(\rho_1,\rho_2)$-equivariant if for all $\gamma\in \Gamma$ and $x\in X_1$, $$f(\rho_1(\gamma) \cdot x) = \rho_2(\gamma)\cdot f(x).$$ When a group $\Gamma$ acts on a manifold with no reference to a representation $\rho_1$, we may just write $\rho_2$-equivariant. By the Selberg lemma, we only need to consider torsion-free groups.
\begin{thm}[Gu{\'e}ritaud-Kassel, Theorem 1.8 in \cite{GK}]\label{GK}
A finitely generated discrete group $\Gamma_{\rho_1,\rho_2}\subset \textrm{PSL}(2,\mathbb{R})\times \textrm{PSL}(2,\mathbb{R})$ of the form (\ref{form}) acts properly discontinuously and without torsion on $\textrm{AdS}^3$ if and only if $\rho_1$ is Fuchsian and strictly dominates $\rho_2$, up to interchanging $\rho_1$ and $\rho_2$.

The quotient is a Seifert-fibered AdS $3$-manifold over the hyperbolic surface $\mathbb{H}/\rho_1(\Gamma)$ such that the circle fibers are timelike geodesics.
\end{thm}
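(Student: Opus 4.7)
The plan is to split the equivalence into two directions, both of which are properness questions, and then read off the Seifert structure from the Lie-theoretic model $\AdS\cong \PSL(2,\mathbb{R})$. The Kulkarni-Raymond theorem recalled above reduces torsion-free proper actions to groups of the form $\Gamma_{\rho_1,\rho_2}$, so the problem is to characterize which such pairs act properly. The unifying tool is a Kobayashi-Benoist type properness criterion: $\Gamma_{\rho_1,\rho_2}$ acts properly on $\PSL(2,\mathbb{R})$ if and only if the translation-length gap $|\ell(\rho_1(\gamma_n))-\ell(\rho_2(\gamma_n))|$ tends to infinity along every escaping sequence $\gamma_n$.

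For the sufficiency direction, assume $\rho_1$ is Fuchsian and $f:\mathbb{H}^2\to\mathbb{H}^2$ is a $(\rho_1,\rho_2)$-equivariant $\lambda$-Lipschitz map with $\lambda<1$. Evaluating $f$ along axes of hyperbolic elements yields $\ell(\rho_2(\gamma))\leq \lambda\,\ell(\rho_1(\gamma))$ for every $\gamma\in\Gamma$; parabolic and cusp contributions in the puncture case are handled by a horocyclic variant of the same argument. Discreteness and faithfulness of $\rho_1$ force $\ell(\rho_1(\gamma_n))\to\infty$ along every escaping sequence, so the uniform gap $(1-\lambda)\,\ell(\rho_1(\gamma_n))$ blows up and the properness criterion yields proper discontinuity. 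Torsion-freeness is immediate from $\rho_1$ being faithful onto a torsion-free Fuchsian group.

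For the necessity direction, properness hands back a uniform translation-length gap along escaping sequences, and after possibly swapping the two representations, one (say $\rho_1$) uniformly dominates the other on translation length. The main obstacle of the proof is the converse upgrade: promoting this pointwise translation-length inequality to an equivariant map of Lipschitz constant strictly below one. Kassel's original argument in the cocompact case minimizes Lipschitz constants among equivariant maps and shows the minimum is attained; Gu{\'e}ritaud-Kassel extend this by identifying a maximally stretched geodesic lamination in a Thurston-style asymmetric metric and explicitly building a strictly contracting equivariant extension off of it. That $\rho_1$ itself must be Fuchsian is then a subsidiary conclusion: non-discreteness or non-faithfulness would produce $\rho_1$-elements of arbitrarily small translation length, violating the uniform gap.

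Finally, the Seifert fibration over $\mathbb{H}^2/\rho_1(\Gamma)$ is extracted from the Lie-theoretic projection $\PSL(2,\mathbb{R})\to \mathbb{H}^2$ coming from the compact subgroup $K\cong \mathrm{SO}(2)$: although the $K$-fibers are not $\Gamma_{\rho_1,\rho_2}$-invariant on the nose, strict domination permits a $(\rho_1,\rho_2)$-equivariant deformation of them that produces well-defined circle fibers in the quotient. Each such fiber is timelike by negative definiteness of the Killing form on the Lie algebra of $K$, and geodesic because one-parameter subgroups are geodesics for bi-invariant metrics.
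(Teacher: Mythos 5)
This statement is quoted from Gu\'eritaud--Kassel (Theorem 1.8 of \cite{GK}); the paper gives no proof of it, so your sketch can only be measured against the actual argument in \cite{GK} and \cite{Ka2}. Your outline follows that argument in broad strokes (Kulkarni--Raymond reduction, a properness criterion, the Lipschitz-minimization/maximally-stretched-lamination upgrade for necessity, and the fibration by translates of the maximal compact for the Seifert structure), but the pivot you place at the center is wrong: the Benoist--Kobayashi properness criterion is stated in terms of the Cartan projection $\mu(g)=d_\sigma(o,g\cdot o)$, not the translation length $\ell$. Properness of $\Gamma_{\rho_1,\rho_2}\curvearrowright\PSL(2,\mathbb{R})$ is equivalent to $|\mu(\rho_1(\gamma))-\mu(\rho_2(\gamma))|\to\infty$ along escaping sequences; the translation-length version you use as an ``if and only if'' is false. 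A concrete counterexample: $\Gamma=\mathbb{Z}$, $\rho_1(1)$ parabolic, $\rho_2$ trivial. The action is proper (left multiplication by a discrete group always is), yet $\ell(\rho_1(\gamma))=\ell(\rho_2(\gamma))=0$ for all $\gamma$, so your length gap never blows up. Conversely, $\mu(\rho_1(n))\sim 2\log|n|\to\infty$ while $\mu(\rho_2(n))=0$, so the Cartan-projection criterion gives the right answer. In the sufficiency direction the correct route from a $(\rho_1,\rho_2)$-equivariant $\lambda$-Lipschitz map, $\lambda<1$, is the estimate $\mu(\rho_2(\gamma))\le\lambda\,\mu(\rho_1(\gamma))+C$ (apply the map to the orbit of a basepoint), which feeds directly into the Cartan-projection criterion; evaluating along axes only gives length inequalities, which are too weak.

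This is not a cosmetic slip in the present context: the gap between length-spectrum domination and Lipschitz (Cartan-projection) domination is exactly the phenomenon this paper is organized around. When peripheral images are parabolic or have equal translation lengths, one can have $\ell(\rho_2(\gamma))\le\lambda\,\ell(\rho_1(\gamma))$-type control (or equality of boundary lengths, as in almost strict domination) without the minimal Lipschitz constant dropping below $1$, and then the action is proper only on a domain $\Omega\subsetneq\AdS$, not on all of $\AdS$ (Theorems C and D). So an argument routed through a translation-length gap would misclassify precisely the representations studied here. Your description of the necessity direction (Kassel's Lipschitz minimization, Gu\'eritaud--Kassel's maximally stretched laminations) and of the Seifert fibration by timelike geodesic fibers $L_{p,f(p)}$, i.e.\ equivariantly deformed translates of $K=\mathrm{PSO}(2)$, is consistent with \cite{GK} and with Section 6 of this paper; replace the properness criterion by the Cartan-projection version and the sketch becomes an accurate summary of the cited proof.
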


From the theorem above, it is clear that the $\textrm{AdS}$ $3$-manifold is compact if and only if $\rho_1$ is cocompact. Specific to closed surfaces, there was an open question: is every non-Fuchsian representation $\pi_1(S_g)\to \PSL(2,\mathbb{R})$ strictly dominated by a Fuchsian one? This question was answered by Deroin-Tholozan in \cite{DT} and Gu{\'e}ritaud-Kassel-Wolf in \cite{GKW}, using different methods. Deroin-Tholozan actually proved a more general result.
\begin{thm}[Deroin-Tholozan, Theorem A in \cite{DT}]\label{DT}
 Let $(X,\nu)$ be a $\textrm{CAT}(-1)$ Hadamard manifold with isometry group $G$ and $\rho: \pi_1(S_g)\to G$ a representation, $g\geq 2$. Then $\rho$ is strictly dominated by a Fuchsian representation, unless it stabilizes a totally geodesic copy of $\mathbb{H}$ on which the action is Fuchsian.
\end{thm}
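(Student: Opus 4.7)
The plan is to produce, inside the deformation space of Fuchsian representations, some $j_\ast$ admitting a $(j_\ast,\rho)$-equivariant conformal harmonic map $f:\mathbb{H}\to X$, and then run an Ahlfors--Schwarz-type argument against the $\CAT(-1)$ hypothesis to upgrade $f$ to a uniformly contracting map. The first ingredient is routine: for each point $[j]\in\mathcal T(S_g)$, equivalently each hyperbolic metric $g_j$ on $S_g$, the Corlette--Donaldson--Labourie theorem yields a unique $(j,\rho)$-equivariant harmonic map $f_j:\mathbb{H}\to X$, provided $\rho$ is reductive (fixes no point of $\partial_\infty X$); the non-reductive case is elementary. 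I then study the Dirichlet energy $E:\mathcal T(S_g)\to\mathbb{R}_{\geq 0}$ defined by $E([j]):=E(f_j)$.

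The crux of the argument, and its main technical obstacle, is to show that $E$ is proper: if $[j_n]$ leaves every compact subset of $\mathcal T(S_g)$, then $E([j_n])\to\infty$. The plan is to quantify the energy along a pinching neck via a lower bound of the form
\[
E(f_j)\;\geq\;C\sum_i \frac{\ell_\rho(\gamma_i)^2}{\ell_j(\gamma_i)},
\]
summed over the pinching curves $\gamma_i$; the geometric hypothesis on $\rho$ is precisely what rules out the single way properness could fail, namely that $\ell_\rho(\gamma_i)=0$ along every pinching direction—that degeneration would force $\rho$ to factor through a Fuchsian representation into the isometry group of a totally geodesic $\mathbb{H}\hookrightarrow X$, which is the stated exception.

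Granted properness, $E$ attains a minimum at some $[j_\ast]$. The first variation of $E$ in a Teichm\"uller direction is the pairing of a Beltrami differential with the Hopf differential $\Phi(f_{j_\ast})\in H^0(S_g,K^2)$, so vanishing of the derivative in all directions forces $\Phi(f_{j_\ast})\equiv 0$, and hence $f:=f_{j_\ast}$ is weakly conformal. Away from its isolated branch points write $f^\ast\nu=\lambda\,g_{j_\ast}$ with $\lambda>0$; the Gauss equation combined with the CAT$(-1)$ bound $K_\nu\leq -1$ gives $K_{f^\ast\nu}\leq -1$, which with $K_{g_{j_\ast}}=-1$ and the conformal-change-of-curvature identity yields
\[
\Delta_{g_{j_\ast}}\log\lambda\;\geq\;2(\lambda-1).
\]
Compactness of $S_g$ produces an interior maximum of $\log\lambda$, whence $\lambda\leq 1$ globally; the strong maximum principle then forces the dichotomy. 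Either $\lambda<1$ pointwise, in which case compactness gives $\lambda\leq\lambda_0<1$ and $j_\ast$ strictly dominates $\rho$, or $\lambda\equiv 1$, in which case $f$ is a totally geodesic isometric immersion whose image is a copy of $\mathbb{H}$ preserved by $\rho$. In the second case, if the induced action on the preserved $\mathbb{H}$ is Fuchsian we are in the stated exception; otherwise the same argument, applied to $\rho|_{\mathbb{H}}$ as a target, produces a Fuchsian $j$ strictly dominating $\rho|_{\mathbb{H}}$, and composing with the totally geodesic inclusion gives the desired strict domination of $\rho$.
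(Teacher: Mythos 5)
Your strategy breaks down at its very first step: the properness of the single energy functional $E([j])=E(f_j)$ on $\mathcal{T}(S_g)$ is simply false for a large part of the theorem's scope, and the failure is not confined to the stated exceptional class. If $\rho(\pi_1(S_g))$ fixes a point of $X$ (say the image lies in a compact group of rotations), the equivariant harmonic map is constant and $E\equiv 0$. More seriously, take $\rho$ killing a non-separating simple closed curve $c$ (e.g.\ a representation factoring through a handlebody/free group onto a Schottky subgroup of $\PSL(2,\mathbb{R})\subset G$): pinching $c$ keeps the energy bounded, since the limiting cusped surface carries a finite-energy harmonic map for the induced representation, so $E$ is not proper; yet such a $\rho$ does not stabilize a totally geodesic $\mathbb{H}$ on which it acts Fuchsianly (it is not even faithful), so it is covered by the theorem and must be strictly dominated. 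Thus your claimed dichotomy ``properness fails only when $\ell_\rho(\gamma_i)=0$ forces the exceptional Fuchsian case'' is wrong, the minimizer $j_\ast$ need not exist, and the argument collapses before the Gauss-equation step. (That second half --- first variation kills the Hopf differential, Gauss equation gives $K_{f^*\nu}\le -1$, maximum principle gives $\lambda\le 1$ and the dichotomy $\lambda<1$ or $\lambda\equiv 1$ --- is essentially sound where it applies; note also that in the $\lambda\equiv 1$ case the action on the stabilized plane is automatically conjugate to $j_\ast$ via the isometry $f$, so your final ``otherwise'' branch is vacuous. But it only applies when a critical point exists, i.e.\ under an additional well-displacing type hypothesis on $\rho$.)

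For comparison, the actual Deroin--Tholozan argument requires no minimization at all: fix an arbitrary conformal structure, take the $\rho$-equivariant harmonic map $f$, and use the Bochner formula to show that an auxiliary conformal metric built from the holomorphic and anti-holomorphic energy densities (of the form $(\sqrt{H}+\sqrt{L})^2\sigma$, which dominates $f^*\nu$ pointwise) has curvature $\le -1$ wherever nondegenerate; the Ahlfors--Schwarz lemma then bounds it by the hyperbolic metric in its conformal class, whose holonomy is the dominating Fuchsian representation, and the equality analysis produces exactly the exceptional totally geodesic Fuchsian case. This is why the theorem holds for \emph{every} representation, proper energy or not. Note also that where energy functionals on Teichm\"uller space do appear in this circle of ideas (Tholozan's parametrization, and the functional $\E_{\rho_1,\rho_2}^\theta$ in the present paper), it is the \emph{difference} of two energies that is used, precisely because its properness is equivalent to (almost strict) domination rather than being an unconditional fact.
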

Tholozan completed the story for closed $3$-manifolds in \cite{T}. Let $\mathcal{T}(S_g)$ be the Teichm{\"u}ller space of $S_g.$
\begin{thm}[Tholozan, Theorem 1 in \cite{T}]\label{T}
Fix $g\geq 2$ and a $\textrm{CAT}(-1)$ Hadamard manifold $(X,\nu)$ with isometry group $G$. The space of dominating pairs within $T(S_g)\times \textrm{Rep}^{nf}(\pi_1(S_g),G)$ is homeomorphic to $$\mathcal{T}(S_g)\times \textrm{Rep}^{nf}(\pi_1(S_g),G),$$ where $\textrm{Rep}^{nf}(\pi_1(S_g),G)$ is the space of representations that do not stabilize a totally geodesic copy of $\mathbb{H}$ on which the action is Fuchsian.
\end{thm}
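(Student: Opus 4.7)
The plan is to construct a canonical homeomorphism $\Psi\colon\mathcal{D}\to T(S_g)\times \textrm{Rep}^{nf}(\pi_1(S_g),G)$, where $\mathcal{D}\subset T(S_g)\times \textrm{Rep}^{nf}(\pi_1(S_g),G)$ denotes the space of dominating pairs. Since the second projection $\mathcal{D}\to\textrm{Rep}^{nf}(\pi_1(S_g),G)$ is already surjective by Theorem \ref{DT}, what must really be produced is, for each $\rho$, a natural bijection between the fibre $\mathcal{D}_\rho:=\{j:(j,\rho)\in\mathcal{D}\}$ and all of $T(S_g)$. The construction goes through equivariant maximal spacelike surfaces in the pseudo-Riemannian product $(\mathbb{H}\times X,\sigma-\nu)$, where $\sigma$ denotes the hyperbolic metric on $\mathbb{H}$.

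For the forward map, fix $(j,\rho)\in\mathcal{D}$ together with any $(j,\rho)$-equivariant $\lambda$-Lipschitz map with $\lambda<1$ from the definition of domination; its graph is a $(j,\rho)$-equivariant spacelike surface in $(\mathbb{H}\times X,\sigma-\nu)$. A variational argument---maximizing area among equivariant spacelike graphs, where the strict Lipschitz bound prevents escape to the light cone and the $\mathrm{CAT}(-1)$ hypothesis on $X$ yields uniqueness---produces a unique equivariant maximal surface $\Sigma_{j,\rho}$. Let $\tau_{j,\rho}\in T(S_g)$ be the marked conformal class it induces on $S_g$ and set $\Psi(j,\rho):=(\tau_{j,\rho},\rho)$; continuity of $\Psi$ follows from standard elliptic regularity under perturbation of the data $(j,\rho)$.

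For the inverse, given $(\tau,\rho)\in T(S_g)\times\textrm{Rep}^{nf}$, let $u_\rho\colon(\widetilde S_g,\tau)\to X$ be the unique $\rho$-equivariant harmonic map and let $q_\rho$ be its Hopf differential. By Wolf's parametrization, the map sending $j\in T(S_g)$ to the Hopf differential of the $j$-equivariant harmonic map $(\widetilde S_g,\tau)\to\mathbb{H}$ is a diffeomorphism onto the space of holomorphic quadratic differentials on $(S_g,\tau)$, so there is a unique $j\in T(S_g)$ realizing $q_\rho$. To see this $j$ lies in $\mathcal{D}_\rho$, note that the combined map $(u_j,u_\rho)\colon\widetilde S_g\to\mathbb{H}\times X$ is harmonic for $\sigma-\nu$ with vanishing Hopf differential $q_j-q_\rho=0$, hence parametrizes a (branched) maximal surface; provided the pointwise energy comparison $e(u_j)>e(u_\rho)$ holds on $\widetilde S_g$, this surface is an unbranched spacelike graph over $\mathbb{H}$, and the graphing map $\mathbb{H}\to X$ is $(j,\rho)$-equivariant with Lipschitz constant bounded by $\sqrt{e(u_\rho)/e(u_j)}<1$. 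That $\Psi$ and this inverse are mutual inverses then follows from the uniqueness in Wolf's theorem and the variational characterization of the maximal surface.

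The main obstacle is the pointwise comparison $e(u_j)>e(u_\rho)$, which simultaneously delivers spacelikeness of the graph and strict domination. It is obtained from a maximum principle applied to a Bochner-type identity for the difference $e(u_j)-e(u_\rho)$, exploiting that $\mathbb{H}$ has curvature exactly $-1$ while $X$ has curvature at most $-1$. Ruling out the degenerate equality case is where $\rho\in\textrm{Rep}^{nf}$ enters essentially: equality would force $u_\rho$ to take values in a totally geodesic hyperbolic plane in $X$ on which $\rho$ acts Fuchsianly, which is exactly what the non-Fuchsian-stabilizer hypothesis excludes.
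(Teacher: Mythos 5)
This statement is quoted from Tholozan's paper \cite{T} rather than proved here, but the present paper reproduces the method in the harder punctured setting (Theorem B, via Sections 3--5), so that is the natural benchmark. Your inverse map is essentially the Deroin--Tholozan/Tholozan construction and matches the map $\Psi$ used in both places: take the $\rho$-equivariant harmonic map for a conformal structure, use Wolf's parametrization to find the unique Fuchsian $j$ with the same Hopf differential, and get spacelikeness and strict domination from the maximum-principle comparison of energy densities, with the $\textrm{Rep}^{nf}$ hypothesis ruling out the equality case. (One small inaccuracy there: the pointwise Lipschitz constant of $u_\rho\circ u_j^{-1}$ is not bounded by $\sqrt{e(u_\rho)/e(u_j)}$; the correct statement is the comparison of pullback metrics $u_\rho^*\nu\leq u_j^*\sigma$, which follows from equal Hopf differentials together with $e(u_\rho)\leq e(u_j)$, and compactness of $S_g$ then gives a uniform $\lambda<1$.)

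The genuine gap is in your forward map, which is exactly the hard half of the theorem. You assert that ``maximizing area among equivariant spacelike graphs'' produces a maximal surface, that the strict Lipschitz bound prevents degeneration, and that $\mathrm{CAT}(-1)$ gives uniqueness; none of this is substantiated, and it is not how existence or uniqueness is actually obtained. Direct maximization of area over spacelike competitors in the signature-$(2,\dim X)$ product has no obvious compactness, and uniqueness of equivariant maximal surfaces in $(\mathbb{H}\times X,\sigma\oplus(-\nu))$ is not a soft consequence of curvature bounds. Tholozan (and this paper, for $\mathcal{E}_{\rho_1,\rho_2}^\theta$) instead fixes $\rho$ and studies the functional on Teichm\"uller space given by the difference of the total energies of the two equivariant harmonic maps: the first-variation formula (Proposition \ref{derivative} here, $d\mathcal{E}=-4\Real\langle\Phi(h)-\Phi(f),\cdot\rangle$) identifies critical points with maximal surfaces; properness of the functional is shown to be equivalent to domination, using the strictly contracting equivariant map to bound the energy difference from below along degenerating sequences (the collar argument of Section 4.2); and uniqueness of the critical point is the specific computation of \cite[Lemma 2.4]{T}, reproduced as (\ref{21}) in Section 4.3 --- it does not follow from a general uniqueness principle for maximal surfaces. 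Finally, continuity of your forward map needs the minimizer to depend continuously on $(j,\rho)$, which is obtained from local uniform properness of the family of functionals (Lemma \ref{unifproper}, i.e.\ \cite[Prop.\ 2.6]{T}), not merely from elliptic regularity under perturbation. Without a replacement for these steps, the existence, uniqueness, and continuity of the maximal surface attached to a dominating pair --- and hence the bijectivity and bicontinuity of your $\Psi$ --- are not established.
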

The homeomorphism is fiberwise in the sense that for each $\rho \in \textrm{Rep}^{nf}(\pi_1(S_g),G)$, it restricts to a homeomorphism from $\mathcal{T}(S_g)\times \{\rho\}\to U\times \{\rho\}$, where $U\subset \mathcal{T}(S_g)$ is an open subset. The key point is that when $(X,\nu)=(\mathbb{H},\sigma)$, this is the deformation space of closed AdS quotients of $\PSL(2,\mathbb{R})$. The components of the deformation space are thus organized according to Euler numbers.

In \cite{AL}, Alessandrini-Li use results from \cite{DT} and \cite{T} to study AdS $3$-manifolds in the framework of non-abelian Hodge theory, finding an alternative construction of the closed AdS circle bundles over surfaces.
\end{subsection}

\begin{subsection}{Recent work}
There is now an interest in understanding non-compact quotients of $\PSL(2,\mathbb{R})$ (see \cite{DGK2} and \cite{DGK1},  for instance). For non-Fuchsian representations, we extended Theorem \ref{DT} in \cite[Theorem 1.2]{S}. Following the idea of Deroin-Tholozan, but using the infinite energy harmonic maps, we found that (with small exceptions) every non-Fuchsian representation is dominated by a Fuchsian one. Using strip deformations (see \cite{DGK1}) we then produced AdS $3$-manifolds \cite[Theorem 1.3]{S}. Some of the same results were proved by Gupta-Su in \cite{GS}, whose methods fall more in line with that of \cite{GKW}.

Near the end of the paper \cite{S}, we found something curious: pairs of representations $\rho_1,\rho_2$ that give rise to circle bundles with an anti-de Sitter structure that do not come from properly discontinuous actions on all of  $\AdS$. At the time we did not put much emphasis on the result; in fact, it's buried near the end of the paper as Proposition 7.7. The motivation for this paper is to explore representations such as $(\rho_1,\rho_2)$ in more depth. The representations all satisfy a geometric condition, which we call almost strict domination. Below, let $(X,\nu)$ be a Hadamard manifold with isometry group $G$.

\begin{defn}\label{asd}
 Let $\rho_1:\pi_1(S_{g,n})\to \PSL(2,\mathbb{R}), \rho_2: \pi_1(S_{g,n})\to G$ be two representations with $\rho_1$ Fuchsian. We say that $\rho_1$ almost strictly dominates $\rho_2$ if 
 \begin{enumerate}
     \item  for every peripheral $\zeta\in \Gamma$,  $\ell(\rho_1(\zeta))=\ell(\rho_2(\zeta))$, and
     \item there exists a $(\rho_1,\rho_2)$-equivariant $1$-Lipschitz map $g$ defined on the convex hull of the limit set of $\rho_1(\Gamma)$ in $\mathbb{H}$ such that the local Lipschitz constants are $<1$ inside the convex hull, and for peripherals $\zeta$ such that $\rho_1(\zeta)$ is hyperbolic, $g$ takes each boundary geodesic axis for $\rho_1(\zeta)$ isometrically to a geodesic axis for $\rho_2(\zeta)$.
 \end{enumerate}
\end{defn}
In the definition above, the global Lipschitz constant is $$\textrm{Lip}(g) = \sup_{y_1\neq y_2} \frac{d_\nu(g(y_1),g(y_2))}{d_\sigma(y_1,y_2)},$$ where $\sigma$ is the hyperbolic metric. The local one is $$\textrm{Lip}_x(g)=\inf_{r>0}\textrm{Lip}(g|_{B_r(x)}) = \inf_{r>0}\sup_{y_1\neq y_2 \in B_r(x)}\frac{d_\nu(g(y_1),g(y_2))}{d_{\sigma}(y_1,y_2)},$$ which by equivariance is a well-defined function on the convex core of  $\mathbb{H}/\rho_1(\pi_1(S_{g,n})$. In the language of \cite{GK}, the projection to $\mathbb{H}/\rho_1(\Gamma)$ of the stretch locus of an optimal Lipschitz map is exactly the boundary of the convex core. This property is very rare: it implies domination in the simple length spectrum (see \cite{GS}). In view of \cite{GK}, we will say a Lipschitz map $g:(\mathbb{H},\sigma)\to (X,\nu)$ is optimal if it satisfies the properties above.

To end this introductory portion, we list some other instances where domination has appeared. March{\'e} and Wolff use domination to answer a question of Bowditch and resolve the Goldman conjecture in genus $2$ \cite{MW}. Dai-Li study domination for higher rank Hitchin representations into $\PSL(n,\mathbb{C})$ \cite{DL}. It would be interesting to see if the almost strict condition generalizes meaningfully to higher rank.

\end{subsection}

\begin{subsection}{Main theorems: maximal surfaces}
We first fix some notations that we will keep throughout the paper. 
\begin{itemize}
    \item $\Sigma$ is surface with genus $g$ and $n$ punctures $p_1,\dots, p_n$, with $\chi(\Sigma)<0$. The deck group for the universal covering $\pi: \tilde{\Sigma}\to \Sigma$ is denoted by $\Gamma$.
    \item $\mathcal{T}(\Gamma)$ is the Teichm{\"u}ller space of classes of complete finite volume marked hyperbolic metrics on $\Sigma$.
    \item $\{\zeta_1,\dots, \zeta_n\}\subset \Gamma$ are the peripheral elements, i.e., those representing the simple closed curves enclosing $p_i$. If $n=1$, write $\zeta_1=\zeta$.
    \item $(X,\nu)$ is a $\textrm{CAT}(-1)$ Hadamard manifold with isometry group $G$.
    \item $(\mathbb{H},\sigma)$ denotes the hyperbolic space with constant curvature $-1$.
\end{itemize}
A representation $\rho:\Gamma\to G$ is irreducible if $\rho(\Gamma)$ fixes no point on the visual boundary $\partial_\infty X$. It is reductive if it is irreducible or if $\rho(\Gamma)$ preserves a geodesic in $X$.
\begin{thma}
Let  $\rho_1:\Gamma\to \PSL(2,\mathbb{R})$ and $\rho_2:\Gamma\to G$ be reductive representations with $\rho_1$ Fuchsian. $\rho_1$ almost strictly dominates $\rho_2$ if and only if there exists a complete finite volume hyperbolic metric $\mu$ on $\Sigma$ and a $\rho_1\times\rho_2$-equivariant tame spacelike maximal immersion from $$(\tilde{\Sigma},\tilde{\mu})\to (\mathbb{H}\times X,\sigma\oplus (-\nu)).$$ The maximal surfaces are not unique but are classified according to Proposition \ref{class}.
\end{thma}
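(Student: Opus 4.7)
Suppose $\tilde{u}=(\tilde{u}_1,\tilde{u}_2)\colon (\tilde{\Sigma},\tilde{\mu})\to (\mathbb{H}\times X,\sigma\oplus(-\nu))$ is a $(\rho_1\times\rho_2)$-equivariant tame spacelike maximal immersion. Its projections $\tilde{u}_k$ are $\rho_k$-equivariant harmonic maps, and the spacelike condition $\tilde{u}_1^{*}\sigma>\tilde{u}_2^{*}\nu$ implies $\tilde{u}_1$ is a local diffeomorphism. Using that $\rho_1$ is Fuchsian together with the tame asymptotics at each cusp, $\tilde{u}_1$ is a diffeomorphism from $\tilde{\Sigma}$ onto the interior of the convex hull of the limit set of $\rho_1(\Gamma)$. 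Then $g:=\tilde{u}_2\circ \tilde{u}_1^{-1}$ is a $(\rho_1,\rho_2)$-equivariant map whose local Lipschitz constant is strictly less than $1$ at every interior point, and the tame behavior of $\tilde{u}$ at the cusps yields both $\ell(\rho_1(\zeta))=\ell(\rho_2(\zeta))$ and the isometric identification of boundary geodesic axes, giving almost strict domination in the sense of Definition \ref{asd}.

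\textbf{Reduction for the hard direction.} A $(\rho_1\times\rho_2)$-equivariant spacelike maximal immersion into $(\mathbb{H}\times X,\sigma\oplus(-\nu))$ whose induced conformal structure is $[\mu]$ is equivalent to a pair of $\rho_k$-equivariant harmonic maps $u_{k,\mu}\colon(\tilde{\Sigma},[\mu])\to$ targets satisfying the Hopf matching condition $\Ho(u_{1,\mu})=\Ho(u_{2,\mu})$ and the pointwise inequality $u_{1,\mu}^{*}\sigma>u_{2,\mu}^{*}\nu$. Thus it suffices, given almost strict domination, to locate a complete finite-volume hyperbolic metric $\mu$ on $\Sigma$ for which these two conditions hold. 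I formulate this as a variational problem over $\mathcal{T}(\Gamma)$.

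\textbf{Variational setup.} Since peripheral $\rho_1(\zeta)$ can be hyperbolic, both harmonic maps $u_{k,\mu}$ have infinite Dirichlet energy in general. However, the hypothesis $\ell(\rho_1(\zeta))=\ell(\rho_2(\zeta))$ ensures that the leading divergences cancel cusp-by-cusp. Following Tholozan's closed-surface strategy and adapting the infinite-energy harmonic map framework of \cite{S}, I define
\[
F(\mu)=E^{\mathrm{ren}}(u_{2,\mu},\rho_2)-E^{\mathrm{ren}}(u_{1,\mu},\rho_1),
\]
and show it is finite, smooth, and depends only on the class of $\mu$ in $\mathcal{T}(\Gamma)$. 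The first variation with respect to a Beltrami differential $\beta$ equals a nonzero constant times $\mathrm{Re}\int_{\Sigma}(\Ho(u_{2,\mu})-\Ho(u_{1,\mu}))\,\overline{\beta}$, so critical points $\mu$ of $F$ automatically satisfy the Hopf matching condition.

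\textbf{Main obstacle: attainment of the infimum.} The most delicate step is proving that $F$ attains a minimum on $\mathcal{T}(\Gamma)$. For boundedness from above I would use the optimal Lipschitz map $g$ from Definition \ref{asd} to construct a test point in $\mathcal{T}(\Gamma)$ where $F$ is explicitly controlled. For properness, as $[\mu]$ leaves compact subsets of $\mathcal{T}(\Gamma)$ — either by a simple closed geodesic pinching or by the cusp geometry degenerating — I would show $F(\mu)\to +\infty$ by combining: (i) the strict interior contraction of $g$, which forces a definite pointwise gap between the energy densities of $u_{1,\mu}$ and $u_{2,\mu}$ on compact regions, producing a growing contribution from the bulk, and (ii) a careful analysis of the renormalized cusp contributions, which because of the peripheral length matching remain uniformly bounded. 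This control on the renormalized energy under both interior and cusp degeneration is the technical heart of the argument. At a minimizer the Hopf matching condition gives a maximal map, and the spacelike and immersive properties follow from a maximum principle applied to $\tilde{u}_1^{*}\sigma-\tilde{u}_2^{*}\nu$ using the interior strictness of $g$. Finally, the non-uniqueness of the hyperbolic metric, arising from freedom in the cusp parameters, yields the classification recorded in Proposition \ref{class}.
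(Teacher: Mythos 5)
Your outline follows the same route as the paper: the easy direction via $g=\tilde u_2\circ\tilde u_1^{-1}$ (though note the boundary length equality comes from the conformality $\Phi(h)=\Phi(f)$ and the residue formula, not from tameness alone), and the hard direction via a renormalized energy difference on $\mathcal{T}(\Gamma)$ whose critical points have matching Hopf differentials, with existence reduced to properness. The properness step, however, which you correctly identify as the heart of the matter, is where the proposal has genuine gaps. First, a sign slip: with $F=E^{\mathrm{ren}}(\rho_2)-E^{\mathrm{ren}}(\rho_1)$ the pinching heuristic drives $F$ to $-\infty$, not $+\infty$; the functional to minimize is $E^{\mathrm{ren}}(\rho_1)-E^{\mathrm{ren}}(\rho_2)$, i.e.\ $\E_{\rho_1,\rho_2}^\theta$. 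More substantively, your mechanism (i) does not work as stated. The optimal map $g$ gives no pointwise comparison between $e(\mu,f_\mu)$ and $e(\mu,h_\mu)$; what one can compare is $\int e(\mu,f_\mu)$ with $\int e(\mu,g\circ h_\mu)$, and in the infinite-energy setting even this integral comparison is not standard: it is exactly the minimization result Lemma \ref{min}, which requires checking that the competitor $g\circ h_\mu$ converges at infinity to a constant-speed projection onto the boundary geodesic. Your proposal never formulates such a statement, yet without it the claimed ``gap between energy densities'' has no justification. Moreover, the contraction of $g$ is not uniformly strict: by Definition \ref{asd} its local Lipschitz constant tends to $1$ at the boundary of the convex hull (this is precisely what separates almost strict from strict domination), so no definite gap is available wherever $h_\mu$ maps near that boundary. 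Finally, the divergent contribution does not come from ``compact regions'' or ``the bulk,'' where areas and densities stay bounded, but from thin collars of width $w_n\to\infty$; one must bound below the cross-sectional integrals over the part of each core circle that $h_{\mu_n}$ maps a definite distance away from the boundary of the convex core, which is the geometric argument of Lemma \ref{nocollar} (collar lemma plus non-peripherality of the pinching curve) and has no counterpart in your sketch.

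Second, divergence in $\mathcal{T}(\Gamma)$ is not exhausted by pinching: the peripheral data are fixed, so ``cusp geometry degenerating'' is not a divergence mode, while a sequence can project into a compact subset of moduli space and still leave every compact subset of Teichm\"uller space (e.g.\ along iterated Dehn twists). Ruling this out requires a separate argument: after extracting mapping classes $\psi_n$ with $\psi_n^*\mu_n\to\mu_\infty$, one must show the classes are eventually constant, which the paper does by bounding $\ell(\psi_n^*\rho_1(\gamma))$ through a second, harder collar estimate in which the image curve may enter and exit the boundary collar many times and is surgered along $\partial C_{d/2}(\xi)$ (Lemmas \ref{evenconstant} and \ref{bigcurve}). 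Your properness sketch contains nothing addressing this, so even granting your item (i) it would only yield compactness in moduli space, not in $\mathcal{T}(\Gamma)$. The remaining ingredients you assert, namely finiteness of the functional under the length-matching hypothesis, the first-variation formula, spacelikeness of critical immersions, and the classification by the parameters at the hyperbolic cusps, do correspond to Propositions \ref{welldef}, \ref{derivative} and \ref{class} and are consistent with the paper, but the two gaps above are essential and must be filled for the hard direction to constitute a proof.
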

We defer the definition of tame and the explanation of uniqueness to Section \ref{2.5} once we have more background on harmonic maps. Loosely, tameness means there is a constraint on the growth of the energy as we go into a puncture. Assuming this constraint, for each $\zeta_i$ such that $\rho(\zeta_i)$ is hyperbolic, there is a twist parameter $\theta_i\in \R$ that reflects how much the maximal immersion twists (the lift to $\tilde{\Sigma}$ of) a geodesic curve going up into the cusp. Assuming the map is tame, it is completely determined by its parameters at each peripheral. We also comment that almost strict domination is the same as strict domination when every $\rho_2(\zeta_i)$ is elliptic (see also \cite[Lemma 6.3]{S}).

Let's give some idea of the proof. For representations $\rho_1,\rho_2$, we define a functional $\E_{\rho_1,\rho_2}^\theta$ on the Teichm{\"u}ller space by $$\E_{\rho_1,\rho_2}^\theta(\mu) = \int_\Sigma e(\mu, h_\mu^\theta) - e(\mu, f_\mu^\theta) dA_\mu,$$ where $h_\mu^\theta$, $f_\mu^\theta$ are certain harmonic maps on $(\tilde{\Sigma},\tilde{\mu})$ that may have infinite energy, in the sense that $$\int_\Sigma e(\mu, h_\mu^\theta) dA_\mu= \int_\Sigma e(\mu, f_\mu^\theta) dA_\mu=\infty.$$ We show that this is always finite, provided the boundary lengths for $\rho_1$ and $\rho_2$ agree (Section 3). We then compute the derivative (Section 3), showing that critical points correspond to spacelike maximal surfaces (Proposition \ref{derivative}). To anyone working with harmonic maps, this is expected, but with no good theory of global analysis to treat infinite energy maps on surfaces with punctures, we have to work through some thorny details directly. In the course of our analysis, we develop a new energy minimization result (Lemma \ref{min}) that may be of independent interest.

Then we show that $\E_{\rho_1,\rho_2}^\theta$ is proper if and only if $\rho_1$ almost strictly dominates $\rho_2$. Here is an indication as to why this is true. Suppose we diverge along a sequence $(\mu_n)_{n=1}^\infty\subset \mathcal{T}(\Gamma)$ by pinching a simple closed curve $\alpha$. Then there is a collar around $\alpha$ whose length $\ell_n$ in $(\Sigma,\mu_n)$ is tending to $\infty$.  Almost strict domination implies $\ell(\rho_1(\alpha))>\ell(\rho_2(\gamma)$, and the analysis from \cite{S} shows that the total energy of the harmonic maps in the collar behaves like 

\begin{equation}\label{heur}
    \ell_n(\ell(\rho_1(\alpha))^2 - \ell(\rho_2(\alpha))^2) \to \infty.
\end{equation}
This reasoning, however, cannot be turned into a full proof. Two problems:
\begin{enumerate}
    \item Along a general sequence that leaves all compact subsets of $\mathcal{T}(\Gamma)$, the two harmonic maps could apriori behave quite differently in a thin collar. For instance we could have twisting in one harmonic map, which increases the energy, but no twisting in the other.
    \item For a general sequence, we also have little control over the energy outside of thin collars.
\end{enumerate}
We circumvent these issues as follows: if $g$ is an optimal map, then our energy minimization Lemma 3.12 implies that $$\E_{\rho_1,\rho_2}^\theta(\mu)\geq \int_\Sigma e(\mu, h_\mu^{\theta})-e(\mu, g\circ h_\mu^{\theta}) dA_\mu.$$ The integrand is positive, so now we can bound below by the energy in collars. The contracting property of $g$ then allows us to effectively study the energy in collars. In the end we make a rather technical geometric argument in order to find lower bounds similar to (\ref{heur}) along diverging sequences.

We also remark that even in the non-compact but finite energy setting, the result on the derivative of the energy functional was not previously contained in the literature. Hence we record it below. 

\begin{prop}\label{derivativefin}
Let $\rho:\Gamma\to G$ be a reductive representation with no hyperbolic monodromy around cusps, so that equivariant harmonic maps have finite energy. Then the energy functional $E_\rho:\mathcal{T}(\Gamma)\to [0,\infty)$ that records the total energy of a $\rho$-equivariant harmonic map from $(\tilde{\Sigma},\mu)\to (X,\nu)$ is differentiable, with derivative at a hyperbolic metric $\mu$ given by $$dE_\rho[\mu](\psi) = -4\Real \langle \Phi,\psi\rangle.$$ Here $\Phi$ is the Hopf differential of the harmonic map at $\mu$.
\end{prop}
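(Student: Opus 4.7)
The plan is to adapt the classical compact-target computation of Sampson, Tromba, and Wolf, which gives $dE=-4\Real\langle\Phi,\cdot\rangle$ on Teichm{\"u}ller space. The new work is to handle the non-compactness of $\Sigma$ carefully, using the finite energy hypothesis and the no-hyperbolic-cusp-monodromy assumption to ensure suitable decay at the punctures.

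First I would fix $\mu\in\mathcal{T}(\Gamma)$ and a tangent vector $\psi$, and choose a smooth path $\mu_t$ of complete finite volume hyperbolic metrics with $\mu_0=\mu$ and $\dot{\mu}_0=\psi$. Under the reductivity and no-hyperbolic-cusp-monodromy assumptions, each $\mu_t$ admits a unique $\rho$-equivariant finite energy harmonic map $h_t\colon(\tilde\Sigma,\tilde\mu_t)\to(X,\nu)$, and standard theory for harmonic maps from punctured surfaces (Lohkamp, Wolf, Li--Tam, etc.) shows $h_t$ depends smoothly on $t$ in an appropriate weighted function space. The object of study is
\[
E(t):=E_\rho(\mu_t)=E(\mu_t,h_t)=\int_\Sigma e(\mu_t,h_t)\,dA_{\mu_t}.
\]

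Next I would invoke the envelope principle. Differentiating via the chain rule,
\[
\frac{d}{dt}\bigg|_{t=0}E(\mu_t,h_t)=\partial_\mu E(\mu_0,h_0)[\psi]+\partial_h E(\mu_0,h_0)[\dot h_0].
\]
The second term equals $\int_\Sigma\langle dh_0,d\dot h_0\rangle_\nu\,dA_{\mu_0}$, and I must show this vanishes. Morally this is just the first variation characterization of $h_0$ being harmonic, but in the non-compact setting the compatibility with a non-compactly-supported $\dot h_0$ needs an exhaustion argument: on a compact piece $\Sigma_R\subset\Sigma$, Green's formula rewrites the integral as $-\int_{\Sigma_R}\langle\tau(h_0),\dot h_0\rangle+\int_{\partial\Sigma_R}(\cdots)$, and the boundary term tends to zero as $R\to\infty$ by the finite energy of $h_0$ together with weighted bounds on $\dot h_0$.

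The final step is to compute $\partial_\mu E(\mu_0,h_0)[\psi]$. Parametrize the conformal class of $\mu_t$ by a Beltrami differential $\nu_t$ relative to a holomorphic coordinate for $\mu_0$, with $\dot\nu_0$ identified with $\psi$. Holding $h_0$ fixed and expanding $e(\mu_t,h_0)\,dA_{\mu_t}$ in powers of $\nu_t$ reads off the linear term, and integrating yields $-4\Real\langle\Phi,\psi\rangle$, where $\Phi=\langle\partial h_0,\partial h_0\rangle_\nu\,dz^2$. All necessary integrals converge because $\Phi$ is $L^1$ under the finite energy assumption.

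The hardest part will be the rigorous justification of the non-compact manipulations above, i.e.\ the interchange of differentiation with integration and the vanishing of boundary terms at the cusps. Here it is essential that $\rho$ has no hyperbolic monodromy at the punctures: this ensures $h_0$ has finite energy with (exponential) decay of $|dh_0|$ near each cusp and provides enough control on $\dot h_0$ in a weighted Sobolev norm to push the compact-case argument through.
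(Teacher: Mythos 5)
Your outline follows the classical ``envelope'' computation: differentiate $E(\mu_t,h_t)$ by the chain rule, kill the $\partial_h E[\dot h_0]$ term by harmonicity, and read off $-4\Real\langle\Phi,\psi\rangle$ from the $\partial_\mu$ term. The genuine gap is the very first analytic input you invoke: smooth (or even differentiable) dependence $t\mapsto h_t$ in a weighted function space, with enough control on $\dot h_0$ at the cusps to make the exhaustion/boundary-term argument work. This is not ``standard theory'' in the equivariant punctured-surface setting --- the existence results of Lohkamp, Wolf, Li--Tam and the paper's own \cite[Proposition 3.8]{S} give existence and uniqueness, and the paper only establishes \emph{continuous} dependence of $f_\mu$ on $\mu$ on compacta (Lemma \ref{compactvar1}); it explicitly remarks that even the finite-energy statement of Proposition \ref{derivativefin} was not previously in the literature. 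Without that differentiability and the weighted bounds on $\dot h_0$, both your interchange of $d/dt$ with the integral and the vanishing of the boundary terms at the cusps are unjustified, and this is exactly the ``hardest part'' you defer. A secondary inaccuracy: the no-hyperbolic-monodromy hypothesis does \emph{not} give exponential decay of $|dh_0|$ near the cusps (for a Fuchsian $\rho$ with parabolic peripherals the harmonic map is essentially the identity and its energy density stays bounded away from $0$); what decays exponentially in the cusp coordinates is the metric variation $\dot\mu$, because its quadratic differential has at most a simple pole.

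The paper's proof is built precisely to avoid your missing ingredient. Since $f_{\mu_t}$ minimizes total energy among $\rho$-equivariant maps for $\mu_t$ and $f_\mu$ does so for $\mu$, one has the two-sided comparison
\begin{equation*}
\int_\Sigma e(\mu_t,f_t)\,dA_{\mu_t}-\int_\Sigma e(\mu,f_t)\,dA_{\mu}\ \le\ E(\mu_t)-E(\mu)\ \le\ \int_\Sigma e(\mu_t,f)\,dA_{\mu_t}-\int_\Sigma e(\mu,f)\,dA_{\mu},
\end{equation*}
so the map is held fixed on each side and no derivative of $t\mapsto f_t$ is ever taken. Dividing by $t$, the integrand converges pointwise to $-4\Real\Phi(f)\overline{\psi}$ (using only continuity of $\mu\mapsto f_\mu$ on compacta), and the dominated convergence theorem is justified by the exponential decay of $\dot\mu$ (and of $|\phi_t|^2$) in the cusp together with the uniform energy-density bounds of Lemma \ref{parun} and the asymptotically conformal comparison of cusp coordinates (Mori's theorem, as in Lemma \ref{close}). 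If you want to salvage your route, you would have to prove the differentiable dependence of the equivariant harmonic map on the hyperbolic metric with weighted decay at the cusps, which is a substantial analytic result in its own right; the minimization sandwich replaces it entirely.
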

See Sections 2 and 3 for notations. The proof can actually be extended to non-positively curved settings (see Remark \ref{npc}).

\end{subsection}
\begin{subsection}{Main theorems: parametrizations}
The next theorem concerns the space of almost strictly dominating pairs. We denote by $\textrm{Hom}^*(\Gamma,G)\subset \textrm{Hom}(\Gamma,G)$ the space of reductive representations. $G$ acts on $\textrm{Hom}^*(\Gamma,G)$ by conjugation, and we define the representation space as $$\textrm{Rep}(\Gamma, G)=\textrm{Hom}^*(\Gamma,G)/G.$$ 
In general this may not be a manifold, but it can have nice structure depending on $G$. For surfaces with punctures we would like to prescribe behaviour at the punctures. 
\begin{defn}
Fix a collection of conjugacy classes $\mathbf{c}=(c_i)_{i=1}^n$ of elements in $G$. The relative representation space $\textrm{Rep}_{\mathfrak{c}}(\Gamma,G)$ is the space of reductive representations taking $\zeta_i$ into $c_i$, modulo conjugation.
\end{defn}
We require one technical assumption on the group $G$: that if we choose a good covering of $\Sigma$ and let $\chi(\Gamma,G)$ denote the space of $G$-local systems with respect to this covering that have reductive holonomy, then the projection from $\chi(\Gamma,G)\to \textrm{Rep}(\Gamma,G)$ is a locally trivial principal bundle. We demand the same for the relative representation space, instead considering local systems whose holonomy representations respect $\mathbf{c}$. This assumption is satisfied under most cases of interest in Higher Teichm{\"u}ller theory, for instance if $G$ is a linear algebraic group (see \cite[Chapter 5]{La}).

 Within $\textrm{Rep}_{\mathbf{c}}(\Gamma,G)$, we have the subset $\textrm{Rep}_{\mathbf{c}}^{nf}(\Gamma,G)$ of representations that do not stabilize a plane of constant curvature $-1$ on which the action is Fuchsian. The almost strict domination condition is invariant under conjugation for both representations, so we can define $\textrm{ASD}_{\mathbf{c}}(\Gamma,G)$ to be the subspace of pairs of representations $\rho_1:\Gamma\to\PSL(2,\mathbb{R})$, $\rho_2:\Gamma\to G$ such that $\rho_1$ is Fuchsian and almost strictly dominates $\rho_2$. Necessarily, $\rho_1$ lies in the Teichm{\"u}ller space $\mathcal{T}_{\mathbf{c}}(\Gamma)$ (defined in Section \ref{2.2}).

 Assume there are $m$ peripherals such that $c_1,\dots,c_m$ are hyperbolic conjugacy classes. In Section 5.1, we explain that once we fix twist parameters $(\theta_1,\dots,\theta_m)\in \mathbb{R}^m$, there is a procedure that takes as input a point in $\mathcal{T}(\Gamma)\times \textrm{Rep}^{nf}_{\mathbf{c}}(\Gamma,G)$ and produces a tame spacelike maximal immersion with those parameters, and thus, by Theorem A, an almost strictly dominating pair. Formalizing the procedure, we obtain a family of maps from $\mathcal{T}(\Gamma)\times \textrm{Rep}^{nf}_{\mathbf{c}}(\Gamma,G)\to \textrm{ASD}_{\mathbf{c}}(\Gamma,G)$, which we prove are homeomorphisms.
\begin{thmb}
 Assume as above that there are $m$ peripherals giving hyperbolic conjugacy classes. For each choice of parameters $\theta=(\theta_1,\dots,\theta_m)\in \mathbb{R}^m$, there exists a homeomorphism $$\Psi^\theta: \mathcal{T}(\Gamma)\times \textrm{Rep}^{nf}_{\mathbf{c}}(\Gamma,G)\to \textrm{ASD}_{\mathbf{c}}(\Gamma,G).$$ Moreover, the homeomorphism is fiberwise in the sense that for each $\rho \in \textrm{Rep}_{\mathbf{c}}^{nf}(\Gamma,G)$, it restricts to a homeomorphism $$\Psi_\rho^\theta:\mathcal{T}(\Gamma)\times \{\rho\} \to U\times \{\rho\} \subset ASD_{\mathbf{c}}(\Gamma,G),$$ where $U$ is a non-empty open subset of the Teichm{\"u}ller space $\mathcal{T}_{\mathbf{c}}(\Gamma)$.
\end{thmb}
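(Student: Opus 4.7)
The plan is to adapt the strategy used for Theorem \ref{T} by combining Theorem A with a Wolf-type harmonic-map parametrization of $\mathcal{T}_\mathbf{c}(\Gamma)$. Given $(\mu,\rho_2)\in\mathcal{T}(\Gamma)\times \textrm{Rep}^{nf}_\mathbf{c}(\Gamma,G)$ and a choice of twists $\theta\in\mathbb R^m$, I first produce the tame $\rho_2$-equivariant harmonic map $h_\mu^\theta:(\tilde\Sigma,\tilde\mu)\to X$, and record its Hopf differential $\Phi_2$, a meromorphic quadratic differential on $(\Sigma,\mu)$ whose leading pole data at each hyperbolic cusp is determined by $\mathbf{c}$ and $\theta$. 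The key technical input---which combines the existence and uniqueness of infinite-energy harmonic maps from \cite{S} with the cusp regularity theory developed earlier in the paper---is that for each fixed conformal structure $(\Sigma,\mu)$ and each fixed $\theta$, the correspondence $\rho_1\leftrightarrow \Phi_1$ gives a homeomorphism from $\mathcal{T}_\mathbf{c}(\Gamma)$ onto the affine space of meromorphic quadratic differentials on $(\Sigma,\mu)$ with the prescribed leading data. I then set $\Psi^\theta(\mu,\rho_2):=(\rho_1,\rho_2)$, where $\rho_1$ is the unique Fuchsian representation mapped to $\Phi_2$ by this parametrization.

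By construction $\Phi_1=\Phi_2$, so $h_\mu^\theta$ and the corresponding $\rho_1$-equivariant harmonic map $f_\mu^\theta:(\tilde\Sigma,\tilde\mu)\to\mathbb{H}$ assemble into a tame $\rho_1\times\rho_2$-equivariant spacelike maximal immersion $(\tilde\Sigma,\tilde\mu)\to(\mathbb{H}\times X,\sigma\oplus(-\nu))$. The \emph{if} direction of Theorem A then yields that $\rho_1$ almost strictly dominates $\rho_2$, so the image of $\Psi^\theta$ lies in $\textrm{ASD}_\mathbf{c}(\Gamma,G)$. Conversely, for any $(\rho_1,\rho_2)\in\textrm{ASD}_\mathbf{c}(\Gamma,G)$, the \emph{only if} direction of Theorem A together with Proposition \ref{class} produces a unique conformal class $\mu$ whose associated $\theta$-twisted maximal immersion equivariantly realizes $(\rho_1,\rho_2)$. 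This conformal class is the preimage of $(\rho_1,\rho_2)$ under $\Psi^\theta$, supplying a two-sided inverse.

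Continuity of $\Psi^\theta$ and its inverse follows from the smooth dependence of equivariant harmonic maps on $(\mu, \rho_2)$, uniform in a neighborhood via the cusp estimates underlying Theorem A, together with continuity of the Wolf-type parametrization. For the fiberwise statement, fix $\rho_2$ and note that $\Psi^\theta_{\rho_2}$ is a continuous injection $\mathcal{T}(\Gamma)\to \mathcal{T}_\mathbf{c}(\Gamma)$ between manifolds of equal real dimension; invariance of domain then shows the image $U$ is open. Non-emptiness of $U$ is guaranteed by the existence statements in \cite{DT} and \cite{S}: every $\rho_2\in\textrm{Rep}^{nf}_\mathbf{c}(\Gamma,G)$ admits at least one Fuchsian representation almost strictly dominating it, and the inverse construction produces a preimage in $\mathcal{T}(\Gamma)$.

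The main obstacle is establishing the Wolf-type parametrization of $\mathcal{T}_\mathbf{c}(\Gamma)$ with fixed twist $\theta$: one must show that every meromorphic quadratic differential on $(\Sigma,\mu)$ with the prescribed leading data at the hyperbolic cusps arises as the Hopf differential of some $\rho_1$-equivariant harmonic map to $\mathbb{H}$. I expect this will require a continuity-method or degree-theoretic argument leveraging the properness of the functional $\E_{\rho_1,\rho_2}^\theta$ proved for Theorem A, together with a delicate analysis of the moduli of infinite-energy harmonic maps near hyperbolic cusps to match the prescribed $\theta$. A secondary subtlety is verifying that the constructed $\rho_1$ is genuinely Fuchsian---equivalently, that the $\rho_1$-harmonic map to $\mathbb{H}$ is a diffeomorphism---which should follow from the spacelike maximal-surface structure via a Sampson-type branch-point exclusion argument.
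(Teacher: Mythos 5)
Your forward construction is essentially the paper's: set $\Psi^\theta(\mu,\rho_2)=(\rho_1,\rho_2)$, where $\rho_1$ is the unique Fuchsian representation whose $\theta$-twisted equivariant harmonic diffeomorphism has the same Hopf differential as the $\rho_2$-equivariant harmonic map, and deduce bijectivity from Theorem A and Proposition \ref{class}. But the ``main obstacle'' you flag --- that every quadratic differential with the prescribed second-order pole data is realized, uniquely and continuously, as the Hopf differential of a Fuchsian harmonic diffeomorphism --- is not open: it is precisely \cite[Theorem 1.4]{S}, which the paper cites, so no continuity-method or degree argument is needed, nor a branch-point exclusion to see that $\rho_1$ is Fuchsian (spacelikeness of the resulting immersion is \cite[Proposition 3.13]{S}).

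The genuine gap is continuity of the inverse. You assert that it ``follows from the smooth dependence of equivariant harmonic maps on $(\mu,\rho_2)$,'' but such dependence only yields continuity of the forward map. The inverse sends $(\rho_1,\rho_2)$ to the unique minimizer in $\mathcal{T}(\Gamma)$ of $\E_{\rho_1,\rho_2}^\theta$, and continuity of a minimizer in parameters does not follow from continuity of the individual functionals: one needs the family $(\rho_1,\rho_2)\mapsto \E_{\rho_1,\rho_2}^\theta$ to be locally uniformly proper, which is exactly Tholozan's criterion (Lemma \ref{unifproper}) and is the bulk of the paper's argument. That step requires constructing, for all pairs $(j,\rho)$ near a given ASD pair, equivariant maps $g_{j,\rho}$ with uniformly controlled local Lipschitz constants off a fixed collar (via a left ball in the Gu\'eritaud--Kassel asymmetric metric and an equivariant Kirszbraun--Valentine extension), rerunning the collar estimates of Section 4 uniformly in $(j,\rho)$ and over the mapping-class orbit, and proving cusp estimates for harmonic maps that are uniform as both the metric and the representation vary (Lemmas \ref{varcusp1} and \ref{varcusp2}); none of this is contained in the estimates ``underlying Theorem A,'' which hold for fixed representations. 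Invariance of domain, which you invoke only for the fiberwise statement, cannot substitute globally, since $\textrm{Rep}_{\mathbf{c}}(\Gamma,G)$ need not be a manifold. Note also that even forward continuity in the $\rho_2$-variable is not a formality: it rests on the flat-connection approximation argument of Lemma \ref{varyrep}, which is new in Section 5 and not an instance of the $\mu$-variation results of Section 3.
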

 The mappings $\Psi^\theta$ are defined in essentially the same way as the map $\Psi$ from \cite{T}. Theorem B should be compared with Theorem \ref{T}. 
\end{subsection}

\begin{subsection}{Main theorems: AdS $3$-manifolds}
Concerning AdS $3$-manifolds, the following explains the relationship with spacelike immersions. 
\begin{prop}\label{equiv}
Given a $\rho_1$-invariant domain $V\subset \mathbb{H}$ on which $\rho_1$ acts properly discontinuously, there is a bijection between 
\begin{enumerate}
    \item $(\rho_1,\rho_2)$-equivariant maps $g: V\to \mathbb{H}$ that are locally strictly contracting, i.e., $$d_\sigma(g(x),g(y))<d_\sigma(x,y)$$ for $x\neq y$,
    \item  and circle bundles $p:\Omega/(\rho_1\times \rho_2(\Gamma))\to V$, where $\Omega\subset \AdS$ is a domain on which $\rho_1\times \rho_2$ acts properly discontinuously and such that each circle fiber lifts to a complete timelike geodesics in $\AdS$.
\end{enumerate}
\end{prop}
Indeed, given a spacelike maximal surface $(h,f)$ defined on $V\subset \mathbb{H}$, we will see later on that $g=h\circ f^{-1}$ is locally strictly contracting on a domain. The implication from (1) to (2) is a slight generalization of the work of Gu{\'e}ritaud-Kassel in \cite{GK}, and should be known to experts.
\begin{remark}
The proof in \cite{KR} that properly discontinuous subgroups of $\AdS$ are of the form $\Gamma_{\rho_1,\rho_2}$ rests on their main lemma that there is no $\mathbb{Z}^2$-subgroup acting properly discontinuously. The proof is local, and one can adapt to show that any torsion-free discrete group acting properly discontinuously on a domain in $\AdS$ takes this form.
\end{remark}
\begin{remark}
A version of this holds more generally for geometric structures modelled on some rank $1$ Lie groups. See Section 6.2.
\end{remark}

Specializing to almost strict domination, we have the following.
\begin{thmc}
Let $\rho_1,\rho_2:\Gamma\to \PSL(2,\mathbb{R})$ be two reductive representations with $\rho_1$ Fuchsian. The following are equivalent.
\begin{enumerate}
\item $\rho_1$ almost strictly dominates $\rho_2$.
\item $\rho_1\times\rho_2$ acts properly discontinuously on a domain $\Omega\subset \AdS$ and induces a fibration from $\Omega/(\rho_1\times \rho_2(\Gamma))$ onto the interior of the convex core of $\mathbb{H}/\rho_1(\Gamma)$ such that each fiber is a timelike geodesic circle. Moreover, when there is at least one peripheral $\zeta$ with $\rho_1(\zeta)$ hyperbolic, no such domain in $\AdS$ can be continued to give a fibration over a neighbourhood of the convex core.
    \item There exists a complete hyperbolic metric $\mu$ on $\Sigma$ and a $(\rho_1,\rho_2)$-equivariant embedded tame maximal spacelike immersion from $(\tilde{\Sigma},\tilde{\mu})\to (\mathbb{H}\times\mathbb{H},\sigma\oplus (-\sigma))$.
\end{enumerate}
Fixing a collection of conjugacy classes $\mathbf{c}$, there is a fiberwise homeomorphism $$\Psi : \mathcal{T}_{\mathbf{c}}(\Gamma)\times \textrm{Rep}_{\mathbf{c}}^{nf}(\Gamma,\PSL(2,\mathbb{R}))\to \textrm{ASD}_{\mathbf{c}}(\Gamma,\PSL(2,\mathbb{R})).$$ If we restrict the domain to classes of irreducible representations, the image identifies with a continuously varying family of AdS $3$-manifolds.
\end{thmc}
Components of the space of AdS $3$-manifolds are classified by the relative Euler numbers (see \cite{BIW}). The only piece that doesn't follow quickly from Theorems A, B, and Proposition \ref{equiv} is the implication from (2) to (1). To prove this part, we draw on the work of Gu{\'e}ritaud-Kassel on maximally stretched laminations \cite{GK} and show that the stretch locus (Definition 6.4) of an optimally Lipschitz map is exactly the boundary of the convex hull of the limit set.

In (2), the domain $\Omega$ is all of $\AdS$ if and only if every $\rho_2(\zeta_i)$ is elliptic. This is a consequence of \cite[Lemma 2.7]{GK} and the properness criteria, Theorem \ref{GK}. Also related to (2), one can get incomplete AdS $3$-manifolds fibering over larger subsurfaces, but still not extending to the whole surface, by doing strip deformations (see Section 6.3). 

\end{subsection}

\begin{subsection}{Future Work}
There are many directions to go from here. For now we point out two.
\begin{enumerate}
    \item In Definition \ref{asd}, we assume that $\rho_1$ is Fuchsian. We can relax this, and allow for $\rho_1$ to be the holonomy of a hyperbolic conifold. One can find maximal spacelike immersions, again with applications to anti-de Sitter geometry. We plan to address this in future work.
    \item In a similar vein, one can remove the tame hypothesis (Definition 2.7). Gupta builds harmonic diffeomorphisms to crowned hyperbolic surfaces whose Hopf differentials have poles of order $\geq 3$ at cusps \cite{Gu}. In forthcoming work, among other things, we plan to introduce a class of AdS $3$-manifolds that fiber over crowned surfaces. 
\end{enumerate}
\end{subsection}

\begin{subsection}{Outline}
\begin{itemize}
    \item In Section 2 we give the background content for Theorem A and Theorem B: spaces of representations, harmonic maps, and maximal surfaces. We also set up the proof of Theorem A by defining the energy functionals $\E_{\rho_1,\rho_2}^\theta$.
    \item  In Section 3, we treat the functional $\E_{\rho_1,\rho_2}^\theta$: well-definedness and the derivative.
    \item In Section 4 we show that $\E_{\rho_1,\rho_2}^\theta$ is proper if and only if $\rho_1$ almost strictly dominates $\rho_2$ (see 1.3 for the proof idea).
    \item We prove Theorem B in Section 5 by studying variations of minimizers of $\E_{\rho_1,\rho_2}^\theta$ (similar to \cite[Section 2]{T}).
    \item Section 6 is a change of pace. After giving an overview of the relevant aspects of AdS geometry, we prove Proposition \ref{equiv} and Theorem C. We close the paper with some comments on the AdS geometry, as well as Higgs bundles.
\end{itemize}
\end{subsection}

\begin{subsection}{Acknowledgements}
I'd like to thank Qiongling Li, Vlad Markovic, and Nicolas Tholozan for helpful discussions and correspondence. Thanks also to an anonymous referee for comments on an earlier draft. I am funded by the FNR grant O20/14766753, \it{Convex Surfaces in Hyperbolic Geometry.}
\end{subsection}

\end{section}
\begin{section}{Preliminaries}

\begin{subsection}{Teichm{\"u}ller spaces}\label{2.2} 
Firstly, we review isometries for Hadamard manifolds. The translation length of an isometry $\gamma\in G$ is $$\ell(\gamma) = \inf_{x\in X}d_\nu(\gamma\cdot x,x),$$ and $\gamma$ is 
\begin{itemize}
    \item elliptic if $\ell(\gamma)=0$ and the infimum is attained,
    \item parabolic if $\ell(\gamma)=0$ and the infimum is not attained, and
    \item hyperbolic if $\ell(\gamma)>0$.
\end{itemize}
Elliptic isometries fix points inside $X$, parabolic isometries fix points on $\partial_\infty X$, and a hyperbolic isometry stabilizes a unique geodesic on which it acts by translation of length $\ell(\gamma)$. Around a peripheral $\zeta$, we say a representation $\rho$ has hyperbolic, parabolic, or elliptic mondromy if $\rho(\zeta)$ is hyperbolic, parabolic, or elliptic respectively.

Fix a collection of conjugacy classes $\mathbf{c}=(c_i)_{i=1}^n\subset \PSL(2,\mathbb{R})$.
\begin{defn}
Suppose $\mathbf{c}$ has no elliptic isometries. We refer to the Teichm{\"u}ller space $\mathcal{T}_{\mathbf{c}}(\Gamma)$ for $\mathbf{c}$ as one of the components of $\textrm{Rep}_\mathfrak{c}(\Gamma,\PSL(2,\mathbb{R}))$ consisting of discrete and faithful representations (there are two such components). Representations in Teichm{\"u}ller space are called Fuchsian.
\end{defn}
Throughout the paper, by ``hyperbolic metric on $\Sigma$" we mean a complete finite volume hyperbolic metric, unless specified otherwise. We say two such metrices $\mu_1,\mu_2$ are equivalent if there exists an orientation preserving diffeomorphism $\psi$, isotopic to the identity, such that $\psi^*\mu_1 = \mu_2$. A Teichm{\"u}ller space agrees with the space of classes $[\mu]$ of hyperbolic metrics on $\Sigma$ with cusps and geodesic boundary components whose lengths are prescribed by $\mathbf{c}$. When we make no reference to the boundary data, we refer to the Teichm{\"u}ller space $\mathcal{T}(\Gamma)$ such that all peripheral monodromy is parabolic. For most of the paper, when we write $\mathcal{T}(\Gamma)$, we are using the perspective of hyperbolic metrics, and with $\mathcal{T}_{\mathbf{c}}(\Gamma)$ we are considering representations.
 
For the sake of convenience, we abuse notation slightly:  if $\mathbf{c}=(c_i)_{i=1}^n$ has elliptic isometries, we write $\mathcal{T}_{\mathbf{c}}(\Gamma)$ to be the Teichm{\"u}ller space with boundary length $\ell(c_i)$ for each $c_i$ with positive length, and cusps for each $c_i$ that is elliptic or parabolic.

We need some terminology for hyperbolic surfaces. Let $\mu$ be any metric on $\Sigma$, with no constraint on the volume. A cusp region is a neighbourhood surrounding a puncture of $\Sigma$ that identifies isometrically with 
\begin{equation}\label{17}
    U(\tau):=\{z=x+iy: (x,y)\in [0,\tau]\times [a,\infty)\}/\langle z\mapsto z+\tau\rangle,
\end{equation}
equipped with the hyperbolic metric $y^{-2}|dz|^2$. 
 \begin{defn}
  The convex core $C(\Sigma,\mu)$ of $(\Sigma,\mu)$ is the quotient of the convex hull of the limit set of $\Gamma$ by the action of $\Gamma$. We denote the convex hull of the limit set by $\tilde{C}(\Sigma,\mu)$. For quotients $\mathbb{H}/\rho_1(\Gamma)$, we omit the metric from the notation.  
 \end{defn}
It is the minimal convex set such that the inclusion $C(\Sigma,\mu)\to \Sigma$ is a homotopy equivalence. The convex core is a finite volume hyperbolic surface with finitely many cusps and geodesic boundary components. $(\Sigma,\mu)$ can be recovered from $C(\Sigma,\mu)$ by attaching infinite funnels along the boundary components. In the language of representations, the monodromy of the holonomy representation around a cusp is parabolic, and for a geodesic boundary component it is hyperbolic.
\end{subsection}

\begin{subsection}{Harmonic maps}
Let $\rho:\Gamma\to G$ be a representation and $\mu$ a metric on $\Sigma$. Given a $C^2$ $\rho$-equivariant map $f: (\tilde{\Sigma},\tilde{\mu})\to (X,\nu)$, the derivative $df$ defines a section of the endomorphism bundle $T^*\tilde{\Sigma}\otimes f^*TX$. The Levi-Civita connections on $(\tilde{\Sigma},\tilde{\mu})$ and $(X,\nu)$ induce a connection on this bundle, and we define the tension field $\tau(\mu,\nu,f)\in \Gamma(f^*TX)$ by $$\tau(\mu,\nu,f) = \textrm{trace}_\mu \nabla df.$$
\begin{defn}
A $\rho$-equivariant map $f:(\tilde{\Sigma},\tilde{\mu})\to (X,\nu)$ is harmonic if $\tau(\mu,\nu,f)=0$.
\end{defn}
We assume throughout that our metrics are smooth, which implies harmonic maps are smooth. Existence of equivariant harmonic maps for irreducible representations of closed surface groups to $\PSL(2,\mathbb{C})$ was proved by Donaldson \cite{D}, who used the heat flow developed by Eells-Sampson \cite{EL}. Corlette and Labourie extended to more general settings \cite{C} \cite{L}.

We simultaneously view $(\Sigma,\mu)$ as a Riemann surface with the holomorphic structure compatible with $\mu$. In a local holomorphic coordinate $z$ on $(\tilde{\Sigma},\tilde{\mu})$ in which $\tilde{\mu} = \mu(z)|dz|^2$, the pullback metric $f^*\nu$ decomposes as
\begin{equation}\label{2}
    f^*\nu = e(\mu,f)\mu(z)dzd\overline{z} + \Phi(f)(z)dz^2 + \overline{\Phi}(f)(z)d\overline{z}^2.
\end{equation}
Since $G$ is acting by isometries and $f$ is equivariant, $f^*\nu$ descends to a symmetric $2$-form on $\Sigma$. We call this the pullback metric, even though it may have degenerate points. $$e(\mu,f) = \frac{1}{2}\textrm{trace}_{\tilde{\mu}} f^*\nu$$ is a well-defined function on $\Sigma$ called the energy density with respect to $\mu$, and $\Phi(f)(z)$ is a holomorphic quadratic differential on $\Sigma$---a holomorphic section of the symmetric square of the canonical bundle---called the Hopf differential.

The energy density can be defined for any equivariant map, harmonic or not. When the base surface is closed, harmonic maps are critical points for the energy functional $$f\mapsto \int_\Sigma e(\mu,f) dA_\mu$$ (defined on a suitable Sobolev space), where $dA_\mu$ is the area form. This ``total energy" is conformally invariant and depends only on the class of $\mu$ in Teichm{\"u}ller space.
\end{subsection}

\begin{subsection}{Infinite energy harmonic maps} For surfaces with punctures, existence is proved for different families of harmonic maps from punctured surfaces in a few places, for example, \cite{Gu}, \cite{KM}, \cite{Lo}, and \cite{W}. In \cite{S}, we proved the most general result for equivariant harmonic maps to negatively curved Hadamard manifolds and studied their properties in detail. The results of \cite{Lo} and \cite{W} now correspond to the case of Fuchsian representations. This is summarized in our result below. Set $\Lambda(\theta)=(1-\theta^2) -i2\theta$. Reorganize the $p_i$ so that $\rho(\zeta_i)$ is hyperbolic for $i=1,\dots, m$. For each such $p_i$, pick a $\theta_i\in\mathbb{R}$. The vector $\theta=(\theta_1,\dots, \theta_m)$ is called a twist parameter.
\begin{thm}[Theorem 1.1 in \cite{S}]\label{S}
Let $\rho:\Gamma\to G$ be a reductive representation. Then there exists a $\rho$-equivariant harmonic map $f^\theta:(\tilde{\Sigma},\tilde{\mu})\to (X,\nu)$ such that if $\zeta$ is a peripheral isometry and $\theta\in \mathbb{R}$, the Hopf differential $\Phi$ has the following behaviour at the corresponding cusp
\begin{itemize}
    \item if $\rho(\zeta)$ is parabolic or elliptic, $\Phi$ has a pole of order at most $1$ and
    \item if $\rho(\zeta)$ is hyperbolic, $\Phi$ has a pole of order $2$ with residue $$-\Lambda(\theta_i)\ell(\rho(\zeta))^2/16\pi^2.$$
\end{itemize}
If $\rho$ does not fix a point on $\partial_\infty X$, then $f^\theta$ is the unique harmonic map with these properties. If $\rho$ stabilizes a geodesic, then any other harmonic map with the same asymptotic behaviour differs by a translation along that geodesic.
\end{thm}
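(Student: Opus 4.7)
The plan is to follow the standard exhaustion strategy for equivariant harmonic maps, adapted to deal with the infinite-energy cusp behavior that is dictated by the twist parameters. First I would build, for each cusp $p_i$, an explicit model map on the corresponding cusp region $U(\tau_i)$ equipped with the hyperbolic metric $y^{-2}|dz|^2$: in the parabolic/elliptic case, set the model to take values in a bounded neighborhood of (respectively) the parabolic fixed point or the elliptic fixed point, arranged so its Hopf differential is $L^1$ near the puncture; in the hyperbolic case, choose the model so that as $y \to \infty$ it wraps around the axis of $\rho(\zeta_i)$ translating by length $\ell(\rho(\zeta_i))$ per period while twisting by $\theta_i$ periods, the explicit calculation producing a pole of order $2$ in the Hopf differential with residue exactly $-\Lambda(\theta_i)\ell(\rho(\zeta_i))^2/16\pi^2$. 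Patching these model maps into an arbitrary smooth $\rho$-equivariant map on a compact core yields a global reference map $f_0 : (\tilde\Sigma,\tilde\mu) \to (X,\nu)$ of infinite energy but with the desired asymptotic shape.

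Next I would exhaust $\Sigma$ by compact subsurfaces $\Sigma_k$ obtained by truncating each cusp at height $y = k$, lift to $\tilde\Sigma_k \subset \tilde\Sigma$, and solve the Dirichlet problem for a $\rho$-equivariant harmonic map $f_k$ on $\tilde\Sigma_k$ with boundary values $f_0|_{\partial\tilde\Sigma_k}$. Existence on each compact piece follows from the work of Hamilton/Jost in the $\mathrm{CAT}(-1)$ setting, and $f_k$ minimizes energy among equivariant extensions of its boundary data. To pass to the limit, I would bound the difference of energies $E(f_k) - E(f_0|_{\Sigma_k})$ uniformly in $k$, using that $f_k$ minimizes on $\Sigma_k$ and that $f_0$ is already close to harmonic in the cusp region by construction; equivalently, the difference $e(\mu,f_k) - e(\mu,f_0)$ is integrable on $\Sigma$. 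Reductivity of $\rho$ then prevents the $f_k$ from escaping to $\partial_\infty X$: the standard sub-level-set argument using the convex function $d_\nu(f_k(x), \cdot)^2$ gives a uniform $L^\infty$ bound on a fundamental domain, from which elliptic regularity produces smooth subconvergence on compact subsets of $\tilde\Sigma$ to a $\rho$-equivariant harmonic map $f^\theta$.

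The most delicate step, and the one I expect to be the main obstacle, is verifying that $f^\theta$ inherits the prescribed Hopf differential asymptotics—especially the order-$2$ pole with the precise residue in the hyperbolic case. The key mechanism is that $\Phi(f_k)$ is holomorphic, so after rescaling one can extract a holomorphic limit on a punctured disk; the residue is forced by the translation length and twist of the boundary circle, which are preserved in the limit because the equivariance constraint pins down how $f_k$ winds around the axis. I would make this quantitative by analyzing the harmonic map equation in the cylindrical model $\{(x,y) : y \ge a\}/\langle z\mapsto z+\tau\rangle$, expanding $f_k$ in Fourier modes in $x$, and showing that the zero-mode tends to a geodesic parametrized with the correct speed while higher modes decay; the square of the derivative then exhibits the Laurent expansion that identifies the residue of $\Phi$.

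Finally, for uniqueness: suppose $f^\theta_1$ and $f^\theta_2$ are two $\rho$-equivariant harmonic maps with the same cusp asymptotics. The function $d_\nu(f^\theta_1, f^\theta_2)^2$ is subharmonic on $\tilde\Sigma$ by the $\mathrm{CAT}(0)$ property, descends to $\Sigma$, and its boundary behavior at cusps is controlled by the matching residues. When $\rho$ does not fix a point on $\partial_\infty X$, strict negative curvature on the image together with the maximum principle forces $f^\theta_1 = f^\theta_2$, following Hartman's argument adapted as in \cite{S}. When $\rho$ stabilizes a geodesic $\ell$, the same convexity argument shows the distance is constant, and minimality forces $f^\theta_1, f^\theta_2$ to differ only by translation along $\ell$, which accounts precisely for the stated ambiguity.
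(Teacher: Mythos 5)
Your proposal is correct and follows essentially the same route as the paper's source argument (Theorem 1.1 of \cite{S}, whose construction is recalled in Sections 3.1 and 3.9 here): model maps in each cusp projecting onto the geodesic axis, precomposed with a fractional Dehn twist to encode $\theta$, Dirichlet approximations $f_r$ on truncated surfaces, uniform energy bounds against the model map, elliptic estimates plus Arz\`ela--Ascoli for convergence on compacta, identification of the order-$2$ pole and residue from the cylindrical asymptotics, and uniqueness via subharmonicity of the distance function. The only small caveat is that for reducible (geodesic-stabilizing) $\rho$ the approximating maps can drift along the invariant geodesic, so one must first renormalize the $f_r$ by translations before extracting a limit, exactly as the paper notes.
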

The residue of $\Phi$ is the coefficient attached to the $z^{-2}$ term of the Laurent expansion in a holomorphic coordinate taking the cusp to $\mathbb{D}^*$. This is independent of the coordinate.

The theorem says that a harmonic map is determined by the choice of twist parameter, with an exception if $\rho$ is reducible. To elaborate on the nomenclature, take $\zeta_i$ with $\rho(\zeta_i)$ hyperbolic, and choose a constant speed parametrization $\alpha_i$ for the geodesic axis $\beta_i$ of $\rho(\zeta_i)$. Let $\mathcal{C}$ be the cylinder
\begin{equation}\label{9}
    \mathcal{C}=\{(x,y)\in[0,1]^2\}/\langle (0,y)\sim (1,y)\rangle.
\end{equation}
with the flat metric. There is a ``model mapping" $\tilde{\alpha}_i^{\theta_i}:\mathcal{C}\to \beta_i$ defined as follows. For $\theta_i=0$, we set $$\tilde{\alpha}_i(x,y)=\alpha_i(x),$$ a constant speed projection onto the geodesic. For $\theta_i\neq 0$, $\tilde{\alpha}_i^\theta$ is defined by precomposing $\tilde{\alpha}_i$ with the fractional Dehn twist of angle $\theta_i$, the map given in coordinates by 
\begin{equation}\label{16}
    (x,y)\mapsto (x+\theta_i y, y).
\end{equation}
Choosing a cusp neighbourhood $U$ of a $p_i$, we take conformal maps $i_r:\mathcal{C}\to U$ that take the boundaries linearly to $\{(x,y):y=r\}$ and $\{(x,y):y=r+1\}$. The mappings $f^\theta \circ i_r:\mathcal{C}\to (X,\nu)$ are harmonic, and as $r\to \infty$ they converge in the $C^\infty$ sense to a harmonic mapping that differs from $\tilde{\alpha}_i^{\theta_i}:\mathcal{C}\to \beta_i$ by a constant speed translation along the geodesic (see \cite[Section 5]{S}).

When the representation is Fuchsian, the harmonic map $h^\theta$ descends to a diffeomorphism from $\mathbb{H}\to C(\mathbb{H}/\rho(\Gamma))$ (see \cite{W}).
\end{subsection}
\begin{subsection}{Spacelike maximal surfaces}\label{2.5}
Let $\rho_1:\Gamma\to \PSL(2,\mathbb{R})$, $\rho_2:\Gamma\to G$ be reductive representations. Then $\rho_1\times \rho_2: \Gamma\to \PSL(2,\mathbb{R})\times G$ defines its own representation.
\begin{defn}
A $\rho_1\times\rho_2$-equivariant map $F:(\tilde{\Sigma},\tilde{\mu})\to (\mathbb{H}\times X, \sigma\oplus (-\nu))$ is maximal if the image surface has zero mean curvature. It is spacelike if the pullback metric $F^*(\sigma\oplus (-\nu))$ is non-degenerate and Riemannian.
\end{defn}
The vanishing of the mean curvature is equivalent to the condition that $F$ is harmonic and conformal. Using the product structure, we can write $$F=(h,f),$$ where $h,f$ are $\rho_1,\rho_2$-equivariant harmonic maps, and from (\ref{2}), $$\Phi(F)=\Phi(h)-\Phi(f).$$ Since $F$ is conformal, $\Phi(h)$ and $\Phi(f)$ agree.  
\begin{defn}
A spacelike maximal surface $F:(\tilde{\Sigma},\tilde{\mu})\to (\mathbb{H}\times X, \sigma\oplus (-\nu))$ is called tame if the Hopf differentials of the harmonic maps have poles of order at most $2$ at the cusps.
\end{defn}
At this point, we can readily prove the easy direction of Theorem A.
\begin{lem}
If $\rho$ is Fuchsian, the existence of a tame spacelike maximal surface implies almost strict domination.
\end{lem}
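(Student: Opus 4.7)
The plan is to deduce the almost-strict-domination data directly from the pointwise algebraic consequences of being spacelike maximal. Writing $F=(h,f)$ with $h:(\tilde\Sigma,\tilde\mu)\to(\mathbb{H},\sigma)$ and $f:(\tilde\Sigma,\tilde\mu)\to(X,\nu)$ the two component harmonic maps, I will first expand the pullback $F^*(\sigma\oplus(-\nu))$ in a local conformal coordinate using (\ref{2}). For this symmetric $2$-tensor to be Riemannian and non-degenerate, two pointwise identities are forced: the Hopf differentials must agree, $\Phi(h)=\Phi(f)$, and the energy densities must satisfy $e(\mu,h)>e(\mu,f)$ everywhere. Since $\rho_1$ is Fuchsian, $h$ is a $\rho_1$-equivariant diffeomorphism from $\tilde\Sigma$ onto the convex hull of the limit set of $\rho_1(\Gamma)$ (by \cite{W}, recalled at the end of Section 2.4), and the natural candidate optimal map is $g=f\circ h^{-1}$ on that convex hull.

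The interior Lipschitz bound will come from a singular value comparison at each interior point. Using the standard identities relating $e$ to $\sigma_1^2+\sigma_2^2$ and $|\Phi|^2$ to $(\sigma_1^2-\sigma_2^2)^2$, together with the fact that the phase of $\Phi$ records the principal directions in the source, equality $\Phi(h)=\Phi(f)$ forces the right singular vectors of $dh$ and $df$ to coincide and $(\sigma_1^h)^2-(\sigma_2^h)^2=(\sigma_1^f)^2-(\sigma_2^f)^2$. Combined with the strict energy inequality this gives $\sigma_i^h>\sigma_i^f$ for $i=1,2$. Writing $dg=df\circ dh^{-1}$ in the shared singular basis shows its singular values are the ratios $\sigma_i^f/\sigma_i^h<1$, so $\|dg\|<1$ pointwise in the interior. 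Convexity of the hull then upgrades this pointwise contraction to the global bound $\textrm{Lip}(g)\leq 1$.

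It remains to match translation lengths at peripherals and check isometric matching of boundary axes. For $\zeta$ with $\rho_1(\zeta)$ hyperbolic, tameness says $\Phi(h)=\Phi(f)$ has a pole of order at most $2$; by Theorem \ref{S}, $\Phi(h)$ has pole of order exactly $2$ with residue determined by $\ell(\rho_1(\zeta))$ and the twist parameter $\theta$. Reading the matching residue on the $f$-side through Theorem \ref{S} forces $\rho_2(\zeta)$ to be hyperbolic with the same twist parameter and $\ell(\rho_1(\zeta))=\ell(\rho_2(\zeta))$. If $\rho_1(\zeta)$ is parabolic, the residue comparison instead forces $\Phi(f)$ to have a pole of order at most $1$, so $\rho_2(\zeta)$ is parabolic or elliptic and both lengths vanish. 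Invoking the $C^\infty$ cusp asymptotics of Section \ref{2.5}, both $h$ and $f$ approach their respective model maps $\tilde\alpha^\theta$ onto the geodesic axes of $\rho_1(\zeta)$ and $\rho_2(\zeta)$ with the same twist parameter and matching translation lengths, so $g$ extends continuously across the boundary and restricts to an isometry on each boundary axis.

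The main obstacle I anticipate is this last cusp step: one has to pass from asymptotic $C^\infty$ convergence of the two harmonic maps to their models to an actual isometric extension of $g$ across the boundary geodesics, and also to ensure nothing pathological happens in the passage from local $\|dg\|<1$ on the open convex hull to the required global $1$-Lipschitz property up to boundary. I expect both to follow from the asymptotic results of \cite[Section 5]{S} combined with the equality of twist parameters forced by matching Hopf residues. The remaining pieces reduce to pointwise linear algebra in quadratic differentials and singular values.
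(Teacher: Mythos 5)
Your proposal is correct and follows essentially the same route as the paper's proof: split $F=(h,f)$, use the shared Hopf differential together with the pole-order/residue statement of Theorem \ref{S} to match peripheral translation lengths, use \cite{W} to realize $h$ as a diffeomorphism onto the convex hull so that $g=f\circ h^{-1}$ is defined and strictly contracting inside (your singular-value computation just makes explicit what the paper attributes to the spacelike condition), and invoke the cusp asymptotics of Section 2.4 for the isometric boundary behaviour. One small correction: the identity $\Phi(h)=\Phi(f)$ is not forced by the pullback being Riemannian (a Riemannian metric can have a nonzero $(2,0)$-part); it comes from conformality, i.e.\ from maximality, which your hypothesis supplies, so the conclusion is unaffected.
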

\begin{proof}
Let $F$ be such a maximal surface and split it as $F=(h,f)$. Note that $\Phi(h)=\Phi(f)$ implies $\ell(\rho_1(\zeta_i))=\ell(\rho_2(\zeta_i))$ for all $i$. Indeed, $\ell(\rho_k(\zeta_i))=0$ if and only if the Hopf differential has a pole of order at most $1$ at the cusp. And if $\ell(\rho_k(\zeta_i))>0$, this is because the residue at each cusp is determined entirely by the choice of twist parameter and the translation length $\ell(\rho_k(\zeta_i))$.

From \cite{W}, the harmonic map $h$ is a diffeomorphism onto the interior of the convex hull of the limit set of $\rho_1(\Gamma)$. Moreover, the mapping $f\circ h^{-1}$ is well defined. Since $F$ is spacelike (actually, by \cite[Proposition 3.13]{S}, maximal implies spacelike here), $f\circ h^{-1}$ is locally strictly contracting on the interior of the convex hull of the limit set for the holonomy representation associated to $h^*\sigma$. Since the harmonic maps both converge in a suitable sense to a projection onto the geodesic axis as in Section 2.4, it has local Lipschitz constant exactly $1$ on the boundary geodesic. By definition, $\rho_1$ almost strictly dominates $\rho_2$.
\end{proof}
 \begin{remark}
Note that $\rho_2$ cannot be Fuchsian, by an application of Gauss-Bonnet.
\end{remark}
The proof of the other direction of Theorem A will go as follows. Take a pair $(\rho_1,\rho_2)$ such that $\rho_1$ almost strictly dominates $\rho_2$. We want to show there is a hyperbolic metric $\mu$ on $\Sigma$ such that, after choosing twist parameters in some way, the associated $(\rho_1,\rho_2)$-equivariant surface from $(\tilde{\Sigma},\tilde{\mu})\to (\mathbb{H}\times X, \sigma \oplus (-\nu))$ is spacelike and maximal. Clearly, we must have the same twist parameter $\theta$ for both $\rho_1$ and $\rho_2$, and it turns out we can choose any $\theta$, as we will now explain. Given a metric $\mu$, let $h_\mu^\theta,f_\mu^\theta$ be harmonic maps for $\rho_1,\rho_2$ respectively with twist parameter $\theta$. Define $$\E_{\rho_1,\rho_2}^\theta: \mathcal{T}(\Gamma) \to \mathbb{R}$$ by $$\E_{\rho_1,\rho_2}^\theta(\mu) = \int_\Sigma e(\mu, h_\mu^\theta) - e(\mu, f_\mu^\theta) dA_\mu.$$ This does not depend on the metric $\mu\in [\mu]$, by conformal invariance (Section 3.1). When $\rho_2$ is reducible, harmonic maps are not unique, but the energy density does not depend on the choice of harmonic map, so the integral is well defined. A priori, it is not given that the integral defining $\E_{\rho_1,\rho_2}^\theta(\mu)$ is finite, for the harmonic maps themselves could have infinite energy. Finiteness will be proved in Proposition \ref{welldef}. The main goal of Section 3 is to prove Proposition \ref{derivative}. 
\begin{prop}\label{derivative}
Given $(\rho_1,\rho_2)$ with $\ell(\rho_1(\zeta_i))=\ell(\rho_2(\zeta_i))$ for all $i$, the functional $\E_{\rho_1,\rho_2}^\theta:\mathcal{T}(\Gamma)\to \mathbb{R}$ is differentiable with derivative at a hyperbolic metric $\mu$ given by $$d\E_{\rho_1,\rho_2}^\theta[\mu](\psi) = -4\Real \langle \Phi(h_\mu^\theta)-\Phi(f_\mu^\theta),\psi\rangle.$$
\end{prop}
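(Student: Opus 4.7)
The plan is to adapt the classical Eells--Lemaire formula for the derivative of the Dirichlet energy to this infinite-energy setting, exploiting the pointwise cancellation between the two energy densities in each cusp. Fix $[\mu]\in\mathcal{T}(\Gamma)$ and a tangent vector represented by a holomorphic quadratic differential $\psi$ with at most simple poles at the cusps. I would build a smooth family $\mu_t$ of complete finite-volume hyperbolic metrics with $\mu_0=\mu$ realizing this direction, and write $h_t := h_{\mu_t}^\theta$, $f_t := f_{\mu_t}^\theta$ for the harmonic maps supplied by Theorem \ref{S}.

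The first step is a local derivative identity on compact exhausting subsurfaces. Let $K_\varepsilon\subset\Sigma$ be the complement of the standard cusp neighborhoods $U_i^\varepsilon$ of height $\varepsilon^{-1}$. In a conformal coordinate, direct differentiation in $t$ of $(e(\mu_t,h_t)-e(\mu_t,f_t))\,dA_{\mu_t}$ splits into a metric variation, which pairs against $\psi$ to give the pointwise Hopf integrand, and a map variation proportional to the tensions of $h_t$ and $f_t$; the latter vanish by harmonicity, so after integration by parts one finds
$$\frac{d}{dt}\bigg|_{t=0}\int_{K_\varepsilon}\bigl(e(\mu_t,h_t)-e(\mu_t,f_t)\bigr)\,dA_{\mu_t} \;=\; -4\Real\langle \Phi(h)-\Phi(f),\psi\rangle_{K_\varepsilon} + B_\varepsilon,$$
where $B_\varepsilon$ is a boundary contribution on $\partial K_\varepsilon$. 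This computation is essentially the content of Proposition \ref{derivativefin} and is valid on each $K_\varepsilon$ since compactness makes all the quantities in sight finite. The remaining task is to pass to the limit $\varepsilon\to 0$.

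The central step, and the main obstacle, is controlling the cusp contributions and the boundary term $B_\varepsilon$ uniformly in $t$. I would invoke the asymptotic description recalled in Section 2.4: at each peripheral $\zeta_i$ with $\rho_1(\zeta_i)$ hyperbolic, both $h_t$ and $f_t$ converge in $C^\infty$ on the cusp cylinder, uniformly for $t$ near $0$, to translates of the model maps $\tilde{\alpha}_i^{\theta_i}$ projecting onto the geodesic axes of $\rho_1(\zeta_i)$ and $\rho_2(\zeta_i)$. The equal-length hypothesis $\ell(\rho_1(\zeta_i))=\ell(\rho_2(\zeta_i))$ forces these two model maps to have identical energy densities, so $e(\mu_t,h_t)-e(\mu_t,f_t)$ decays exponentially into each cusp uniformly in $t$, and the same decay transfers to the $t$-derivative of the integrand and to $B_\varepsilon$. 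Moreover $\Phi(h)-\Phi(f)$ has at most a simple pole at each cusp, since the $z^{-2}$ residues prescribed by Theorem \ref{S} agree, so the global Hopf pairing is absolutely convergent; the parabolic and elliptic cases are easier, with all quantities in sight having at most simple poles. These estimates simultaneously give $B_\varepsilon\to 0$, uniform vanishing of the cusp integrals of $e(\mu_t,h_t)-e(\mu_t,f_t)$, and a uniform bound on the $t$-difference quotients needed for genuine differentiability. Sending $\varepsilon\to 0$ in the local identity then yields the stated formula. The hardest piece is establishing the uniform-in-$t$ exponential decay: it requires revisiting the construction in \cite{S} and verifying that the exponential decay constants for the convergence to model maps are locally uniform in the modulus of each cusp, as the underlying Teichmüller point varies.
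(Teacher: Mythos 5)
Your route is genuinely different from the paper's: you differentiate in $t$ directly on a compact exhaustion, kill the interior map-variation term by harmonicity, and are left with a boundary term $B_\varepsilon$ on $\partial K_\varepsilon$ plus cusp contributions to be controlled. The paper instead never differentiates the harmonic maps in $t$ at all: it sandwiches $\E(\mu_t)-\E(\mu)$ between two quantities in which only the metric varies, by swapping competitors via the energy-minimization Lemma \ref{min} (e.g. $\int_\Sigma e(\mu,h)-e(\mu,f_t)\,dA_\mu \le \E(\mu)\le \int_\Sigma e(\mu,h_t)-e(\mu,f)\,dA_\mu$ and the analogous inequalities at $\mu_t$), and then applies dominated convergence using the exponential decay of the metric variation in the cusp, the uniform derivative bounds of Lemma \ref{hypun}, and the asymptotically conformal change of cusp coordinates (Lemma \ref{close}). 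The only input about the dependence of the harmonic maps on $\mu$ that the paper needs is continuity on compacta (Lemmas \ref{compactvar1} and \ref{vary}), not differentiability.

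This is where your proposal has a genuine gap. The boundary term produced by your integration by parts is, schematically, $B_\varepsilon=\oint_{\partial K_\varepsilon}\langle \partial_\nu h_t,\partial_t h_t\rangle - \langle \partial_\nu f_t,\partial_t f_t\rangle$, and already writing it down presupposes that the families $t\mapsto h_t$, $t\mapsto f_t$ of equivariant, infinite-energy harmonic maps are differentiable in $t$, with variation fields regular enough to integrate by parts and controlled uniformly deep in the cusps. Nothing in Theorem \ref{S} or in the convergence to model maps gives this: the $C^\infty$ convergence on cusp cylinders is a statement for each fixed metric, and uniformity of that convergence in $t$ does not produce existence or bounds for $\partial_t h_t$, $\partial_t f_t$ (indeed, since the cusp modulus $\tau_{\mu_t}$ moves with $t$, the maps genuinely move at unit rate arbitrarily deep in the cusp, so the variation field is not small there). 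Your claim that "the same decay transfers to the $t$-derivative of the integrand and to $B_\varepsilon$" also cannot be justified by the equal-length cancellation: that cancellation applies to $e(\mu_t,h_t)-e(\mu_t,f_t)$, but the two terms of $B_\varepsilon$ involve variation fields valued in the different pullback bundles $h_t^*T\mathbb{H}$ and $f_t^*TX$, so they do not cancel against each other and each must be bounded on its own. Establishing differentiability in $t$ of these infinite-energy equivariant harmonic maps and uniform cusp control of their variation fields is precisely the global-analytic machinery the paper says is unavailable and deliberately circumvents; without it, the limit $\varepsilon\to 0$, the interchange of $d/dt$ with the integral, and even the genuine (two-sided) differentiability of $\E$ are not secured. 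To repair the argument along the paper's lines, replace the direct differentiation by the two-sided comparison coming from Lemma \ref{min} (and the finite-energy minimizing property as in Proposition \ref{derivativefin}), which reduces everything to differentiating the metric alone and to the dominated-convergence estimates you already sketched.
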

Thus, critical points of $\E_{\rho_1,\rho_2}^\theta$ correspond to maximal surfaces. And by \cite[Proposition 3.13]{S}, any critical point is a spacelike immersion. In Section 4, we prove that the almost strict domination hypothesis implies there is a metric $\mu$ that minimizes $\E_{\rho_1,\rho_2}^\theta$, and that this is the only critical point. Granting this, we can discuss uniqueness.
\begin{prop}\label{class}
Suppose $\rho_1$ almost strictly dominates $\rho_2$ and that $\rho_2$ is irreducible. Assume that for every choice of twist parameter $\theta$, each $\E_{\rho_1,\rho_2}^\theta$ admits a unique critical point. Then every spacelike maximal immersion is of the form $(h_{\mu_\theta}^\theta,f_{\mu_\theta}^\theta)$, where $\mu_\theta$ is the minimizer for $\E_{\rho_1,\rho_2}^\theta$. If $\rho_2$ is reducible, then for each $\theta$ there is a a $1$-parameter family of tame maximal surfaces, and each one is found by translating $f_\mu^{\theta}$ along a geodesic axis.
\end{prop}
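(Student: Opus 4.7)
The plan is to start from an arbitrary tame spacelike maximal immersion $F\colon(\tilde{\Sigma},\tilde{\mu})\to(\mathbb{H}\times X,\sigma\oplus(-\nu))$ equivariant for $\rho_1\times\rho_2$, and reconstruct its data by combining Theorem \ref{S}, the conformality condition built into maximality, and the assumed uniqueness of critical points of $\E_{\rho_1,\rho_2}^\theta$. Since the target splits as a product, writing $F=(h,f)$ produces equivariant harmonic maps $h\colon\tilde{\Sigma}\to\mathbb{H}$ and $f\colon\tilde{\Sigma}\to X$. Conformality of $F$ amounts to the identity $\Phi(h)=\Phi(f)$, and tameness says these Hopf differentials have poles of order at most two at the cusps.

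The heart of the argument is to extract a single twist vector $\theta\in\mathbb{R}^m$ that serves both factors. At each hyperbolic peripheral $\zeta_i$, Theorem \ref{S} computes the order-two residues of $\Phi(h)$ and $\Phi(f)$ as $-\Lambda(\theta_i^h)\ell(\rho_1(\zeta_i))^2/(16\pi^2)$ and $-\Lambda(\theta_i^f)\ell(\rho_2(\zeta_i))^2/(16\pi^2)$ respectively. Almost strict domination yields $\ell(\rho_1(\zeta_i))=\ell(\rho_2(\zeta_i))$, while conformality yields equality of the two residues; since $\theta\mapsto\Lambda(\theta)$ is injective on $\mathbb{R}$, this forces $\theta_i^h=\theta_i^f=:\theta_i$. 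With a common $\theta$ in hand, the uniqueness clause of Theorem \ref{S} applied to the Fuchsian (hence irreducible) $\rho_1$ gives $h=h_\mu^\theta$, and the same clause applied to $\rho_2$ gives $f=f_\mu^\theta$ when $\rho_2$ is irreducible, or $f$ equal to $f_\mu^\theta$ up to a translation along the $\rho_2(\Gamma)$-stabilized geodesic axis when $\rho_2$ is reducible.

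To identify $\mu$, maximality of $F$ translates to $\Phi(h_\mu^\theta)=\Phi(f_\mu^\theta)$, which by Proposition \ref{derivative} is precisely the vanishing of $d\E_{\rho_1,\rho_2}^\theta[\mu]$. The hypothesis says this critical point is unique and equals $\mu_\theta$, concluding the irreducible case. In the reducible case the same argument still identifies $\mu=\mu_\theta$, and the asserted $\mathbb{R}$-family of maximal surfaces arises by translating $f_{\mu_\theta}^\theta$ along the stabilized geodesic; each translate is harmonic and $\rho_2$-equivariant with the same Hopf differential, so pairing with $h_{\mu_\theta}^\theta$ delivers a genuine tame spacelike maximal immersion.

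The main obstacle is the twist-parameter matching in the second paragraph: a priori $h$ and $f$ could carry different asymptotic rotations at the hyperbolic cusps, and without simultaneously exploiting the translation-length equality from almost strict domination and the residue formula from Theorem \ref{S}, one could not conclude that the two factors lie in a single family indexed by a common $\theta$. Once this alignment is secured, everything reduces to a direct application of uniqueness statements already in hand.
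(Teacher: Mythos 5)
Your proposal is correct and follows essentially the same route as the paper's proof: split $F=(h,f)$, use the uniqueness clause of Theorem \ref{S} to identify each factor as $h_\mu^\theta$, $f_\mu^\theta$ (up to translation in the reducible case), and use Proposition \ref{derivative} plus the assumed uniqueness of critical points to identify $\mu=\mu_\theta$. The only difference is that you spell out the twist-parameter matching via the residue formula, equal peripheral lengths, and injectivity of $\Lambda$, a step the paper treats as immediate.
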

\begin{proof}
In the irreducible case, let $F=(h',f'): (\tilde{\Sigma},\tilde{\mu})\to (X,\nu)$ be a spacelike maximal immersion. By the uniqueness statement in Theorem \ref{S}, $h'$ must be of the form $h_\mu^\theta$ for some parameter $\theta$. Thus $f=f_\mu^\theta$, and by our assumption, $\mu$ is the unique critical point for $\E_{\rho_1,\rho_2}^\theta$. 

In the reducible case, all $\rho_2(\zeta_i)$ that are hyperbolic must translate along the same geodesic. Since translating along the geodesic does not change the energy density or the Hopf differential, the one minimizer for $\E_{\rho_1,\rho_2}^\theta$ is the only one that yields maximal surfaces.
\end{proof}

\end{subsection}
\end{section}

\begin{section}{The derivative of the energy functional}
In this section we prove Proposition \ref{derivative}. We assume $\Sigma$ has only one puncture $p$ with peripheral $\zeta$---the analysis is local so the general case is essentially the same. When $\rho_1(\zeta),\rho_2(\zeta)$ are hyperbolic, we assume the twist parameter $\theta$ is $0$, and we also write $\ell$ in place of $\ell(\rho_k(\zeta))$. In Section \ref{3.9}, we explain the adjustments for the general case. With these assumptions, it causes no harm to write $\E$ for $\E_{\rho_1,\rho_2}^\theta$. Not only in this section but for the rest of the paper, we write $h_\mu,f_\mu$ for $h_\mu^\theta, f_\mu^\theta$ when $\theta$ is zero. 

\begin{subsection}{Analytic preliminaries}
Let $f:(\tilde{\Sigma},\tilde{\mu})\to (X,\nu)$ be harmonic. According to the splitting of the complexified tangent bundle $T\tilde{\Sigma}\otimes \mathbb{C}$ into $(1,0)$ and $(0,1)$ components, the energy density decomposes as $$e(\mu,f) = H(\mu,f) + L(\mu,f),$$ the holomorphic energy and the anti-holomorphic energy. When $(X,\nu)$ is a surface with compatible complex structure, we can define these quantities in the usual way as norms of holomorphic and anti-holomorphic derivatives. In general, we set $H(\mu,f)\geq L(\mu,f)$ to be the unique solutions of the system $x+y=e(\mu,f)$, $xy=\mu^{-2}|\Phi|^2,$ such that $x\geq y$, which are well-defined since $e(\mu,f) - 2\mu^{-2}|\Phi|^2\geq 0$. When the context is clear, we write $H=H(\mu,f)$, and the same for other analytic quantities. We collect some notations and relations that, in the sequel, we use without comment.
\begin{itemize}
\item When unspecified, $C$ denotes a constant that may grow in the course of a proof. 
\item Setting $x=x_1,y=x_2$, let $e_{\alpha\beta}(f)=f^*\nu\Big(\frac{\partial f}{\partial x_\alpha}, \frac{\partial f}{\partial x_\beta}\Big)$, so that $e(\mu,f)=\frac{1}{2}\mu^{\alpha\beta}e_{\alpha\beta}(f)$.
    \item The Jacobian $J=J(\mu,f)$ satisfies $J=H-L$.
    \item For a conformal metric $\mu=\mu(z)|dz|^2$, $||\Phi||^2:=\mu^{-2}|\Phi|^2=HL$.
    \item Set $|\psi| = L^{1/2}/H^{1/2}$. If $(X,\nu)=(\mathbb{H},\sigma)$ and $\rho$ is Fuchsian, $\psi = \frac{f_{\overline{z}}}{f_z}$ is the Beltrami form.
    \end{itemize}
We will make use of the following results.
\begin{itemize}
    \item Total energy is conformally invariant: if $i:(\Omega,g)\to (\tilde{\Sigma},\tilde{\mu})$ is conformal, then $f\circ i$ is harmonic.
    \item If $f_1,f_2$ are both harmonic, then the distance function $p\mapsto d_\nu(f_1(p),f_2(p))$ descends to a subharmonic function on $(\Sigma,\mu)$.
    \item Cheng's lemma \cite{Ch}: let $B_{\tilde{\mu}}(z,r)\subset (\tilde{\Sigma},\tilde{\mu})$ be a metric ball and $f:B_{\tilde{\mu}}(z,r)\to (X,\nu)$ a harmonic map whose image lies in a $\nu$-ball of radius $R_0$. Then $$e(\mu,f)\leq R_0^2 \cdot \frac{(1+r)^2}{r^2}.$$
    \item Proposition 3.13 from \cite{S}: if $h$ is a Fuchsian harmonic map with same Hopf differential as $f$ (this exists by Theorem 1.4 of \cite{S}), then $H(\mu,f), L(\mu,f)\leq H(\mu,h)$.
\end{itemize}
We prepare notation for dealing with cusps. On a hyperbolic surface $(\Sigma,\mu)$, there is a cusp region $U(\tau):=\{z=x+iy: (x,y)\in [0,\tau]\times [2,\infty)\}/\langle z\mapsto z+\tau\rangle,$ where $\tau=\tau_\mu$ depends on $\mu$. For $r\geq 2$, we define $(\Sigma_r,\mu)$ to be $\Sigma\backslash \{z\in U(\tau): y>r\}$, and we put $(\tilde{\Sigma}_r,\tilde{\mu})$ to be the preimage in $\tilde{\Sigma}$. Note that, as a set of points, this depends on $\mu$. As in Section 2.4, $\mathcal{C}$ is a conformal cylinder, and when we say a ``conformal cylinder for $\mu$," we mean the length is adjusted to be $\tau$ and the height is $1$, so that there are conformal maps $\mathcal{C}\to U(\tau)$. We define the maps $$i_r:\mathcal{C}\to \Sigma_{r+1}\backslash\Sigma_r \subset \Sigma$$ by $(x,y)\mapsto x + i(y+r)$, which are used to study asymptotics of harmonic maps.

It is often helpful to perturb the metric in the cusp. A metric $\mu$ is expressed in the coordinate of $U(\tau)$ as $\mu(z) = y^{-2}|dz|^2$. The flat-cylinder metric $\mu^f$ is defined by $\mu$ in $\Sigma_2=\Sigma\backslash U$, the flat metric $|dz|^2$ in the cusp coordinates on $\Sigma\backslash \Sigma_3$, and smoothly interpolated in between. This is conformally equivalent to $\mu$, so harmonic maps for $\mu$ and $\mu^f$ are the same. 

We now recall the construction of infinite energy harmonic maps from \cite[Section 5]{S} for a reductive representation $\rho$ such that $\rho(\zeta)$ is hyperbolic with $\theta=0$. Denote by $\beta$ the geodesic axis of $\rho(\zeta)$ and fix a constant speed parametrization $\alpha: [0,\tau]\to \beta$ such that $\rho(\zeta)\alpha(0)=\alpha(\tau).$ We define mappings $f_r$ to be equivariant harmonic maps on $(\tilde{\Sigma}_r,\tilde{\mu})$ with equivariant boundary values specified by $\alpha$ on $\partial\tilde{\Sigma}_r$. In keeping with the notation of \cite{S}, we set $\varphi=f_2$, extend $\varphi$ vertically into the cusp by $\varphi(x,y)=\alpha(x)$ in a fundamental domain for $\Gamma$, and extend equivariantly. In this cusp, $\varphi$ satisfies
\begin{equation}\label{12}
    e(\mu,\varphi) = \frac{\ell^2}{2\tau^2}.
\end{equation}
 In \cite[Proposition 5.1]{S}, we show there is a uniform bound
 \begin{equation}\label{10}
     \int_{\Sigma_s} e(\mu,f_r) dA_\mu \leq \int_{\Sigma_s} e(\mu,\varphi) dA_\mu = \int_{\Sigma_2}e(\mu,\varphi) dA_\mu + \frac{(s-2)\ell^2}{2\tau}.
 \end{equation}
 Convergence on compacta in the $C^{\infty}$ topology then follows easily: the bound on the total energy gives bounds on the energy density, and via the elliptic theory one promotes to estimates on higher derivatives. One then use some now well-understood arguments to show that if $\rho$ is irreducible, all maps take a compact set into the same compact set, at which point one can use Arz{\`e}la-Ascoli and a diagonal argument. In the reducible case we may need to first renormalize $f_r$ via well-chosen translations $g_r$. Note that if we choose a different constant speed parametrization $\alpha$ for our approximation maps, we still get convergence to the same harmonic map. 
 
Next we turn to finiteness of $\E$. If no monodromy is hyperbolic, then harmonic maps have finite energy. So suppose $\rho(\zeta)$ is hyperbolic.
\begin{prop}\label{asym}
Let $f:(\tilde{\Sigma},\tilde{\mu})\to (X,\nu)$ be a $\rho$-equivariant harmonic map. Then there exists $C,c,y_0>0$ such that, in the cusp coordinates (\ref{17}), for all $y\geq y_0$, the inequality $$|e(\mu^f,f_\mu)(x,y)-\ell^2/2\tau^2|<Ce^{-cy}$$ holds.
\end{prop}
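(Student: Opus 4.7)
The plan is to upgrade the smooth convergence $f\circ i_r\to \tilde\alpha^c$ (for the appropriate translate $\tilde\alpha^c$ of the model map along $\beta$ picked out by $f$), recalled in the excerpt from \cite[Section 5]{S}, to an exponential rate via a Fourier-mode / spectral gap analysis of the harmonic map system on the half-cylinder. First I pass to the flat-cylinder metric $\mu^f$, which is conformal to $\mu$—so $f$ remains harmonic—and identifies the deep cusp with $S^1_\tau\times[3,\infty)$ equipped with the Euclidean product metric. The model $\tilde\alpha^c$ is harmonic for $\mu^f$ with $e(\mu^f,\tilde\alpha^c)\equiv\ell^2/(2\tau^2)$, so it suffices to prove $f\to\tilde\alpha^c$ exponentially fast in $C^1$.

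Next I introduce Fermi coordinates $(t,\mathbf n)$ in a tubular neighbourhood of the geodesic axis $\beta\subset X$, with $t$ the arclength along $\beta$ and $\mathbf n$ the orthonormal-frame exponential coordinates. For $y$ sufficiently large the image of $f$ lies in this chart, and we can write $f(x,y)=(t(x,y),\mathbf n(x,y))$ with $w:=t-(\ell/\tau)x$ and $\mathbf n$ small in $C^k_{\textrm{loc}}$. A direct computation using that $\beta$ is a geodesic linearizes the harmonic map system at the model to
\begin{equation*}
\Delta w = Q_1,\qquad \Delta\mathbf n - M\mathbf n = Q_2,
\end{equation*}
where $M$ is the symmetric matrix encoding the sectional curvatures of $X$ in planes containing $\dot\beta$ (positive definite by the negative sectional curvature of $(X,\nu)$), and $Q_1,Q_2$ are quadratic remainders in $(w,\mathbf n,\nabla w,\nabla\mathbf n)$.

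Fourier-expanding in $x\in S^1_\tau$: every mode of $\mathbf n$ solves an ODE in $y$ with a strictly positive zeroth order term, so a barrier/Gronwall argument combined with the $C^0$ smallness already known yields $|\mathbf n|\leq Ce^{-cy}$ with $c$ controlled below by the smallest eigenvalue of $\sqrt M$. Equivariance $f(x+\tau,y)=\rho(\zeta)f(x,y)$ gives $t(x+\tau,y)-t(x,y)=\ell$, so $w$ is $\tau$-periodic in $x$; its zero Fourier mode satisfies $u''(y)=Q_1^{(0)}$, exponentially small by the previous step, and integrating twice with $u\to 0$ produces exponential decay of the zero mode. The non-zero Fourier modes of $w$ decay exponentially thanks to the $2\pi/\tau$ spectral gap of the Laplacian on the circle.

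A Schauder bootstrap promotes the resulting $C^0$ exponential decay of $(w,\mathbf n)$ to $C^k$ exponential decay for all $k$, and since $e(\mu^f,\cdot)$ is a universal polynomial in the first derivatives of the coordinate representation of the map, with $e(\mu^f,\tilde\alpha^c)\equiv \ell^2/(2\tau^2)$, the proposition follows. The main obstacle I anticipate is the coupling between the $w$- and $\mathbf n$-equations through $Q_1,Q_2$: one must iterate in the correct order, first extracting a crude exponential rate for $\mathbf n$ from the good sign of its linearization, then transporting this into the equation for $w$, and only then closing the loop to obtain sharp simultaneous exponential decay; some care is also needed to check that the positive definiteness of $M$ is really implied by the curvature hypothesis on $(X,\nu)$ when the normal bundle to $\beta$ has rank greater than one.
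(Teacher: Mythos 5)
Your outline is workable, but it takes a genuinely different route from the paper. The paper's proof is a short algebraic/comparison argument: writing $e=H+L=|\Phi|\bigl(|\psi|+|\psi|^{-1}\bigr)$, it uses that the Hopf differential has a pole of order at most $2$, so in cylinder coordinates $|\Phi|=\ell^2/4\tau^2+O(e^{-2\pi y})$; then Wolf's estimate $1-|\psi|^2=O(e^{-\ell y/2})$ handles the Fuchsian case, and in general one introduces the Fuchsian harmonic map $h$ with the \emph{same} Hopf differential and uses $L(h)\le L(f)$, $H(f)\le H(h)$ (Proposition 3.13 of \cite{S}) to sandwich $|\psi(h)|\le|\psi(f)|\le|\psi(h)|^{-1}$, whence $e(\mu^f,f)=\ell^2/2\tau^2+O(e^{-cy})$ with no linearization at all. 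Your proposal instead proves the stronger statement that $f$ converges to the model map exponentially in $C^1$, by linearizing the harmonic map system in Fermi coordinates about the axis and exploiting the positivity of the Jacobi-type operator coming from negative curvature; this is a legitimate cylindrical-end asymptotics argument, and its only external input (untwisted $C^\infty$ convergence to a \emph{fixed} translate of the model) is exactly what is recalled from \cite[Section 5]{S}, so the logic is not circular. Two caveats: the literal Fourier-mode statement for $\mathbf n$ is not right in general, since $M=M(t)$ varies along $\beta$ for a general $\mathrm{CAT}(-1)$ Hadamard target (so the modes couple), and when the normal bundle of $\beta$ has rank $>1$ the normal coordinates are only periodic up to the rotational part of $\rho(\zeta)$; both issues disappear if you run the barrier/Gronwall argument directly on $|\mathbf n|$ (every term of $Q_2$ carries a factor of $|\mathbf n|$, so $\Delta|\mathbf n|\ge(\lambda-C\epsilon)|\mathbf n|$ in the Kato sense), which you in fact also propose, and then feed the resulting decay into the $w$-equation, where periodicity of $w$ and the circle spectral gap (plus integration from infinity for the zero mode, using $u,u'\to0$) are unproblematic. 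The trade-off: your argument is longer and needs the bootstrap machinery, but it yields exponential $C^k$ convergence of the map itself and does not use the special relation to a companion Fuchsian map; the paper's argument is a few lines but leans on \cite{W} and on the Hopf-differential comparison specific to this setting.
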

\begin{proof}
We implicitly work with the metric $\mu^f$. Write 
\begin{equation}\label{3}
    e= H +L = H^{1/2}L^{1/2}\Big ( \frac{H^{1/2}}{L^{1/2}} + \frac{L^{1/2}}{H^{1/2}}\Big )= |\Phi|(|\psi|^{-1}+|\psi|).
\end{equation}
 Since $\Phi$ has a pole of order at most $2$, changing coordinates to the cusp $[0,1]\times [1,\infty)$ (see Section 3.2 below) gives the expression 
 \begin{equation}\label{4}
     |\Phi| = \frac{\ell^2}{4\tau^2} + O(e^{-2\pi y}).
 \end{equation}
From \cite[page 513]{W}, if $f$ is Fuchsian, we have the estimate $$1-|\psi|^2 = O(e^{-\ell/2y}),$$ and hence $$1-O(e^{-c y}) \leq |\psi|^2 \leq 1.$$ If $\rho$ is not Fuchsian, we find the Fuchsian harmonic map $h:(\tilde{\Sigma},\tilde{\mu})\to (\mathbb{H},\sigma)$ with the same Hopf differential. Then from $H(f)L(f)=H(h)L(h)$ and \cite[Proposition 3.13]{S} we get $$L(h)\leq L(f), H(f)\leq H(h),$$ which implies $$|\psi(h)|\leq |\psi(f)|\leq |\psi(h)|^{-1},$$ and furthermore 
\begin{equation}\label{5}
    1-O(e^{-cy}) \leq |\psi|^2 \leq \frac{1}{1-O(e^{-cy})}.
\end{equation}
Inserting (\ref{4}) and (\ref{5}) into (\ref{3}) gives $$e(\mu^f,f) = \frac{\ell^2}{2\tau^2} + O(e^{-c_1y}),$$ as desired. 
\end{proof}
The following is now evident.
\begin{prop}\label{welldef}
For a fixed hyperbolic metric $\mu$, the integral defining $\E(\mu)$ is finite. That is, $\E : \mathcal{T}(\Gamma)\to\mathbb{R}$ is well defined.
\end{prop}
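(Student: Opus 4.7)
The plan is to decompose the integral into a compact piece and a cusp piece, using conformal invariance on the cusp and applying Proposition \ref{asym} to both harmonic maps simultaneously. The key observation is that, under the standing assumption $\ell(\rho_1(\zeta)) = \ell(\rho_2(\zeta)) = \ell$, the leading asymptotic terms in the two energy densities are equal and hence cancel, leaving only an exponentially decaying remainder.

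First I would fix a large $R \geq 2$ and write $\Sigma = \Sigma_R \cup (\Sigma \setminus \Sigma_R)$, so that $\Sigma \setminus \Sigma_R$ lies inside the cusp $U(\tau)$. On the compact subsurface $\Sigma_R$, equivariance places the images of a fundamental domain under $h_\mu$ and $f_\mu$ in a fixed compact subset of the respective target (in the reducible case, after normalizing by translation along the stabilized geodesic, which does not affect the energy density). Cheng's lemma then produces uniform upper bounds on $e(\mu, h_\mu)$ and $e(\mu, f_\mu)$ over $\Sigma_R$, so the integrand $|e(\mu, h_\mu) - e(\mu, f_\mu)|$ is bounded and its integral over the finite-area region $\Sigma_R$ is finite.

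For the cuspidal contribution, I would use conformal invariance of total energy to replace $\mu$ by the flat-cylinder metric $\mu^f$, which agrees with $|dz|^2$ in the cusp once $R$ is large. Applying Proposition \ref{asym} separately to $h_\mu$ and $f_\mu$ then yields constants $c, C, y_0 > 0$ such that, for $y \geq y_0$,
\begin{equation*}
  e(\mu^f, h_\mu)(x,y) = \frac{\ell^2}{2 \tau^2} + O(e^{-c y}), \qquad e(\mu^f, f_\mu)(x,y) = \frac{\ell^2}{2 \tau^2} + O(e^{-c y}).
\end{equation*}
The two $\ell^2/(2 \tau^2)$ terms cancel, so the integrand in the cusp is $O(e^{-c y})$. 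Integrating against $dA_{\mu^f} = dx\, dy$ over $[0, \tau] \times [R, \infty)$ gives a bound of the form $C \tau \int_R^\infty e^{-c y}\, dy < \infty$, which finishes the argument.

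I do not expect any serious obstacle: the hard analytic work has already been packaged into Proposition \ref{asym}, so only bookkeeping remains. The one subtlety is in the reducible case, where the choice of harmonic map is ambiguous up to a translation along the stabilized geodesic; one must pick the normalizations so that both fundamental-domain images are compactly contained, but since such translations leave the energy densities invariant, the integrand in $\E(\mu)$ is insensitive to the choice and the argument above goes through unchanged.
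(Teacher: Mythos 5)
Your proposal is correct and follows essentially the same route as the paper, which treats Proposition \ref{welldef} as an immediate consequence of Proposition \ref{asym}: the $\ell^2/2\tau^2$ leading terms cancel and the exponentially decaying remainder is integrable in the cusp, while the compact part contributes finitely. Your filling-in of the bookkeeping (conformal invariance to pass to $\mu^f$, boundedness on $\Sigma_R$, and insensitivity of the energy density to translations in the reducible case) matches the intended argument.
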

\end{subsection}
\begin{subsection}{Tangent vectors of Teichm{\"u}ller space.} To do analysis on Teichm{\"u}ller space, we need a tractable way to study tangent vectors. Fix a metric $\mu_0$ on $\Sigma$ and let $z=x+iy$ be a conformal coordinate for the compatible holomorphic structure, so that $\mu_0=\mu_0(z)|dz|^2$. In this subsection we describe variations of the metric $$\mu' = \mu + \dot{\mu}.$$ Since we are working with harmonic maps, which are conformally invariant, we are permitted to work in a specified conformal class. In particular, we may restrict to variations through complete finite volume hyperbolic metrics. The hyperbolic condition is satisfied if and only if $$\dot{\mu}_{11}+\dot{\mu}_{22}=0$$ and $$\phi(z)dz^2=(\dot{\mu}_{11}-i\dot{\mu}_{12})dz^2$$ is a holomorphic quadratic differential. When $\Sigma$ has a puncture, the necessary and sufficient condition on $\dot{\mu}$ to preserve the complete finite volume property is that $\phi$ has a pole of order at most 1 at the cusp.
\begin{remark}\label{Bers}
We have described the tangent space to Teichm{\"u}ller space at $(\Sigma,\mu)$ as the space of $L^1$-integrable quadratic differentials on the conjugate Riemann surface. This characterization coincides with the one we get from the Bers embedding into $\mathbb{C}^{3g-3+n}$. If $\mu(z)$ is a conformal metric, the mapping $$\phi(z)dz^2\mapsto \mu(z)^{-1}\phi(\overline{z})\frac{d\overline{z}}{dz}$$ yields the usual identification with the space of harmonic Beltrami forms.
\end{remark}
 We work out the growth condition on $\phi$ in the cusp coordinates for $U(\tau)$. For convenience put $\tau=1$. The mapping $z\mapsto w(z) = e^{2\pi iz}$ takes a vertical strip to a punctured disk $$\{0\leq x<1, y> h, y^{-2}|dz|^2\}\to \{0<|w|<e^{-h}, |w|^{-2}(\log |w|)^{2}|dw|^2\}$$ holomorphically and isometrically. A meromorphic quadratic differential in the disk with a pole of order at most $1$ is written $$\Phi = \phi(w) = (a_{-1}w^{-1}+\varphi(w))dw^2,$$ with $\varphi$ holomorphic. Applying the above holomorphic mapping, the differential transforms according to 
 \begin{equation}\label{ivreallylosttrack}
     \Phi =  \phi(w(z))\Big ( \frac{\partial w(z)}{\partial z}\Big )^2 dz^2= \phi(e^{iz})(ie^{iz})^2 dz^2= -(a_{-1} e^{iz} + e^{2iz}\varphi(e^{iz}) )dz^2.
 \end{equation}
Thus, any admissible variation decays exponentially in the cusp as we take $y\to \infty$.

We also need to describe the inverse variation $$(\mu')^{\alpha\beta} =\mu^{\alpha\beta}+\dot{\mu}^{\alpha\beta}.$$ In the Einstein notation, the relation $\mu^{\alpha\beta}\mu_{\beta\gamma} = \delta_{\alpha\gamma}$ gives $\dot{\mu}^{\alpha\beta}\mu_{\beta\gamma} + \mu^{\alpha\beta}\dot{\mu}_{\beta\gamma} = 0$, and hence $$\dot{\mu}^{\alpha\beta}=-\mu^{\alpha\rho}\mu^{\beta \tau}\dot{\mu}_{\tau\rho}.$$ When $\mu$ is conformal, this returns $$\dot{\mu}^{\alpha\beta} = -\mu^{-2}\dot{\mu}_{\alpha\beta}.$$ We derive that, in the cusp coordinates, 
\begin{equation}\label{11}
    \dot{\mu}^{\alpha\beta} = -y^4\dot{\mu}_{\alpha\beta},
\end{equation}
so the decay is still exponential. From these descriptions we deduce the following.
\begin{prop}\label{fin}
Suppose $g$ is a finite energy equivariant map with respect to a finite volume hyperbolic metric $\mu$. Then it also has finite energy for any other metric.
\end{prop}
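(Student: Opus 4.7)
The plan is to split $\Sigma = K \cup \bigsqcup_i U_i$ into a compact core $K$ and cusp neighborhoods $U_i$, and show that $e(\mu', g)\, dA_{\mu'} \leq C \cdot e(\mu, g)\, dA_\mu$ pointwise on each piece for a uniform constant $C$. Integrating will then give $\int_\Sigma e(\mu', g)\, dA_{\mu'} \leq C \int_\Sigma e(\mu, g)\, dA_\mu < \infty$. I would first carry this out for small admissible variations $\mu' = \mu + \dot{\mu}$ and extend to arbitrary $\mu'$ using path-connectedness of $\mathcal{T}(\Gamma)$.

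On $K$, both metrics are smooth and positive-definite on a compact set, so their ratio is uniformly bounded above and below and the pointwise estimate is trivial. In a cusp $U$, use the coordinates $(x, y)$ adapted to $\mu = y^{-2}|dz|^2$, where conformal invariance yields the particularly clean expression
\[
e(\mu, g)\, dA_\mu = \tfrac{1}{2}(e_{11}(g) + e_{22}(g))\, dxdy,
\]
with coefficients $e_{\alpha\beta}(g) = g^*\nu(\partial_\alpha g, \partial_\beta g)$ that do not depend on the metric. A direct $2 \times 2$ matrix computation (using that $\dot{\mu}$ is traceless in this normalization) gives
\[
(\mu')^{\alpha\beta}\sqrt{\det \mu'} = \delta^{\alpha\beta} + O\big(y^2 |\dot\mu_{\alpha\beta}|\big),
\]
and the exponential decay $\dot{\mu}_{\alpha\beta} = O(e^{-cy})$ established in the previous paragraph gives $y^2 |\dot\mu_{\alpha\beta}| = O(y^2 e^{-cy})$, which is uniformly bounded and vanishes at the cusp. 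Combined with the Cauchy--Schwarz bound $|e_{\alpha\beta}(g)| \leq e_{11}(g) + e_{22}(g)$, this produces the desired pointwise inequality on $U$.

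The main obstacle is controlling the cusp corrections uniformly, and this is precisely what the exponential decay of $\dot{\mu}$ supplies; everywhere else the estimate is routine. For the extension to an arbitrary finite volume hyperbolic metric $\mu''$, I would connect $\mu$ and $\mu''$ by a smooth path $\{\mu_t\}_{t \in [0,1]}$ in $\mathcal{T}(\Gamma)$, cover the compact path by finitely many neighborhoods on which the perturbative estimate applies, and iterate the resulting bound along the cover to obtain a global constant.
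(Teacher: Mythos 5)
Your proof is correct and follows essentially the same route as the paper: reduce to a small variation $\mu'=\mu+\dot{\mu}$ in $\mathcal{T}(\Gamma)$, exploit the exponential decay of the associated quadratic differential in the cusp together with conformal invariance (so that $e_{11}+e_{22}$ is integrable in the flat cusp coordinates) and Cauchy--Schwarz for the cross term, then iterate along a path of metrics. The only cosmetic difference is that you package the estimate as a single pointwise domination $e(\mu',g)\,dA_{\mu'}\leq C\,e(\mu,g)\,dA_\mu$, whereas the paper splits the integrand into a bounded conformal factor times the original density plus an exponentially decaying correction; the content is the same.
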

\begin{proof}
By conformal invariance of energy, we are permitted to work with complete finite volume hyperbolic metrics. It is enough to prove the claim for metrics that are as close as we like to $\mu$. For then we can connect $\mu$ to any other metric $\mu'$ via a smooth path in the Teichm{\"u}ller space, cover this path with finitely many small balls, and argue inductively. That is, we can assume $\mu'=\mu+\dot{\mu}$, for some small variation $\dot{\mu}$. In a local coordinate $z=x+iy$, we write $|\mu+\dot{\mu}|=\textrm{det}(\mu+\dot{\mu})$, so that the volume form is $$dA_{\mu+\dot{\mu}}=\sqrt{|\mu+\dot{\mu}|}dz\wedge d\overline{z}=\sqrt{|\mu|-|\phi|^2}dz\wedge d\overline{z},$$ where $\phi$ is the holomorphic quadratic differential associated to $\dot{\mu}$. For simplicity, let's restrict to a $C^1$ map $g$. In a cusp, 
\begin{align*}
    2e(\mu+\dot{\mu},\nu, g)\sqrt{|\mu+\dot{\mu}|} &= \sqrt{|\mu|-|\phi|^2}(\mu+\dot{\mu}^{\alpha\beta})e_{\alpha\beta}(g) \\
 &= \frac{\sqrt{|\mu|-|\phi|^2}}{\sqrt{|\mu|}}\cdot \sqrt{|\mu|}e(\mu,g) + \sqrt{|\mu|-|\phi|^2} \dot{\mu}^{\alpha\beta} e_{\alpha\beta}(g).
\end{align*}
By hypothesis, the first term is uniformly bounded and converges to an integrable quantity. For the second term, using the flat cylinder metric we gather
$$\int_\Sigma (e_{11}+e_{22}) dA_{\mu^f} < \infty.$$ By Cauchy-Schwarz, $e_{12}$ is integrable as well. Meanwhile, the factor $\sqrt{|\mu|-|\phi|^2}\dot{\mu}^{\alpha\beta}$ decays exponentially as we go into the cusp.
\end{proof}
The proof above shows that the bound depends only on the energy of $g$ with respect to $\mu$ and the Teichm{\"u}ller distance of the new metric to $\mu$ (see \cite[Chapter 5]{Ah} for the definition).
\end{subsection}

\begin{subsection}{Variations: finite energy harmonic maps}
 In the proof of Proposition \ref{derivative}, we need to know that for a variation of hyperbolic metrics $t\mapsto \mu_t$, $f_{\mu_t}\to f_{\mu}$ pointwise as $t\to 0$. In this subsection and the next, we show uniform convergence of harmonic maps on compacta. To do so, we verify that analytic results for harmonic maps can be made uniform in the source metric. We start with the case of elliptic and parabolic monodromy at the cusp. Throughout, let $\rho:\Gamma\to G$ be a reductive representation.
\begin{prop}\label{parun}
 Assume the monodromy is elliptic or parabolic and fix an admissible metric $\mu_0$. For all $\mu$, there exists a $C_{k}>0$ depending only on the Teichm{\"u}ller distance from $\mu_0$ to $\mu$ such that for all $k>0$, $$|(\nabla^{\mu^f,\nu})^{(k)} df_\mu|_{\mu^f}\leq C_{k}.$$
\end{prop}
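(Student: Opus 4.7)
The plan is to reduce everything to standard elliptic regularity once a uniform $C^0$ bound on $df_\mu$ is in hand. The strategy has four steps: (i) bound the total energy $\int_\Sigma e(\mu^f, f_\mu) dA_{\mu^f}$ uniformly in terms of the Teichm\"uller distance $d_T(\mu_0,\mu)$; (ii) localize the image of $f_\mu$ using the elliptic or parabolic monodromy; (iii) apply Cheng's lemma on $\mu^f$-balls of fixed radius to convert (i)--(ii) into a pointwise bound on $e(\mu^f,f_\mu)$; and (iv) bootstrap via Schauder estimates on the harmonic map equation to obtain all higher covariant derivative bounds. The flat-cylinder metric $\mu^f$ plays a central role: it turns the PDE at the cusp into a uniformly elliptic equation on a flat cylinder, bypassing the degeneracy of $\mu$ at the puncture.

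For (i), since $f_\mu$ minimizes $\mu$-energy among $\rho$-equivariant maps, $E_\mu(f_\mu) \le E_\mu(f_{\mu_0})$, and Proposition \ref{fin} together with the remark after it gives $E_\mu(f_{\mu_0}) \le C(d_T(\mu_0,\mu))$; conformal invariance then controls $E_{\mu^f}(f_\mu)$ by the same quantity. For (ii), in the elliptic case $\rho(\zeta)$ fixes a point $p_0 \in X$; since $(X,\nu)$ is CAT$(-1)$, the function $d_\nu(f_\mu(\cdot),p_0)^2$ is subharmonic on $\tilde\Sigma$ and descends to $\Sigma$, and a standard subharmonic/maximum-principle argument combined with the energy bound shows $f_\mu(\tilde\Sigma) \subset B_\nu(p_0,R_0)$ with $R_0$ depending only on $d_T(\mu_0,\mu)$. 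In the parabolic case $\rho(\zeta)$ fixes a unique $\xi \in \partial_\infty X$ and one works with the Busemann function $b_\xi$: convexity of $b_\xi$ gives $\Delta(b_\xi\circ f_\mu)\ge 0$, and combined with the flat-cylinder structure of $\mu^f$ at the cusp this confines the image of every $\mu^f$-ball of fixed radius to a set of uniformly bounded $\nu$-diameter. Cheng's lemma then yields the $C^0$ bound on $|df_\mu|_{\mu^f}$.

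Finally, for (iv), the harmonic map equation in $\mu^f$-coordinates has the semilinear form $\Delta_{\mu^f}f^A = -\Gamma^A_{BC}(f)\,(\mu^f)^{ij}\partial_i f^B\partial_j f^C$; the coefficients $(\mu^f)^{ij}$ vary smoothly as $\mu$ ranges in a bounded ball in $\mathcal{T}(\Gamma)$, and the target Christoffel terms are bounded once the image is localized. Iterated interior Schauder estimates then yield $C^{k,\alpha}$ bounds on $f_\mu$, translating into the claimed bound $|(\nabla^{\mu^f,\nu})^{(k)}df_\mu|_{\mu^f}\le C_k$. The main obstacle is the parabolic case: the image of $f_\mu$ genuinely leaves every compact set as one enters the cusp, so the Cheng-type bound must be extracted by exploiting the flat-cylinder geometry and horospherical behaviour of $f_\mu$ rather than by a naive global image bound. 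It is precisely the interplay of finite-energy decay at the cusp with the parabolic fixed-point structure that makes the bounded-diameter statement hold uniformly in $\mu$.
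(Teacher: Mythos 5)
Your outline shares the paper's skeleton (uniform total-energy bound from energy minimization plus Proposition \ref{fin}, then a pointwise bound on the energy density, then elliptic bootstrapping), but the middle step is where your argument has a genuine gap. The paper converts the $L^1$ bound on $e(\mu^f,f_\mu)$ into a pointwise bound with a Harnack/mean-value inequality for the energy density (Schoen--Yau), which depends only on uniform quantities of the domain --- a Ricci lower bound and the injectivity radius of $(\Sigma,\mu^f)$, the latter being exactly what the flat-cylinder metric provides --- and requires no control whatsoever on where the image of $f_\mu$ sits in $X$. Your route through Cheng's lemma does require image localization, and the localization you propose does not hold. In the elliptic case the claim $f_\mu(\tilde\Sigma)\subset B_\nu(p_0,R_0)$ is false in general: $f_\mu(\tilde\Sigma)$ is $\rho(\Gamma)$-invariant, so a global bound of this kind would force $\rho(\Gamma)$ to have a bounded orbit and hence a fixed point in $X$, which is not the case for the reductive representations at hand (only the peripheral element $\rho(\zeta)$ fixes $p_0$, not the whole group). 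In the parabolic case, subharmonicity of $b_\xi\circ f_\mu$ only controls the Busemann function from one side, and sublevel sets of $b_\xi$ (horoballs) are unbounded, so this cannot yield a uniform $\nu$-diameter bound on images of $\mu^f$-balls; moreover the maximum-principle step is being invoked on the noncompact $\tilde\Sigma$ without justification.

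If you want to keep the Cheng's-lemma route, the fix is local and makes the monodromy dichotomy in your step (ii) unnecessary (the monodromy type should only enter through finiteness of the total energy): on a $\mu^f$-ball of fixed radius (fixed radius is available because $\mu^f$ has uniformly bounded injectivity radius), a Courant--Lebesgue argument produces a circle whose image has length controlled by the square root of the local energy, and then subharmonicity of $x\mapsto d_\nu(f_\mu(x),q)$ for a point $q$ on that circle confines the image of the enclosed ball to a $\nu$-ball of radius controlled by the total energy bound from step (i); Cheng's lemma then gives the pointwise density bound, and your Schauder bootstrap in step (iv) goes through as in the paper. As written, however, step (ii) is the missing/incorrect ingredient, and without it the application of Cheng's lemma is unjustified.
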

\begin{proof}
 We showed in \cite[Proposition 3.8]{S} that one can always find finite energy harmonic maps. Take such a map $g:(\tilde{\Sigma},\tilde{\mu}_0)\to (X,\nu)$, which by Lemma \ref{fin} is finite energy for any other metric, with a bound depending on the Teichm{\"u}ller distance to $\mu_0$. By the energy minimizing property in negatively curved spaces, $$\int_{\Sigma} e(\mu^f,f_{\mu}) dA_{\mu^f}=\int_{\Sigma} e(\mu,f_{\mu}) dA_\mu \leq \int_{\Sigma} e(\mu, g) dA_\mu \leq C.$$
For the uniform bounds on the energy density, independent of $\mu$, one uses a Harnack-type inequality, say, from \cite[page 171]{SY}, that only depends on uniform quantities: the Ricci curvature and the injectivity radius. Since we work with the flat-cylinder metric $\mu^f$ as opposed to the hyperbolic metric $\mu$, we do have uniform control on the injectivity radius. The estimates on higher order derivatives then come from the elliptic theory on the Sobolev space adapated to $(\Sigma,\mu_0)$ (see \cite[Chapter 10]{Ni}). It is clear from the general theory that the implicit constants in these estimates can be made uniform in $\mu$. 
\end{proof}
\begin{lem}\label{don}
Suppose $\rho$ is irreducible. Let $K\subset \tilde{\Sigma}$ be compact and suppose $f:(K,\mu)\to (X,\nu)$ is $C$-Lipschitz for some $C>0$. Then there exists a compact set $\Omega(K,C)\subset X$ that does not depend on the map $f$ such that $f(K)\subset \Omega(K,C)$.
\end{lem}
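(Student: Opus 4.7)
The plan is to combine the irreducibility of $\rho$ with the equivariance of $f$ to pin the image of a single base point of $K$ inside a fixed compact set of $X$, and then to propagate this bound to all of $f(K)$ using the Lipschitz hypothesis.

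First I would construct a proper witness function on $X$. Since $\rho$ is irreducible, $\bigcap_{\gamma \in \Gamma} \textrm{Fix}_{\partial_\infty X}(\rho(\gamma))$ is empty, and by compactness of the visual boundary there exist finitely many $\gamma_1, \ldots, \gamma_N \in \Gamma$ with $\bigcap_i \textrm{Fix}_{\partial_\infty X}(\rho(\gamma_i)) = \emptyset$. Set $F(x) = \sum_i d_\nu(x, \rho(\gamma_i) x)$; I would verify that $F$ is continuous and proper. Continuity is immediate, and properness follows from convexity of the displacement functions $x \mapsto d_\nu(x,\rho(\gamma_i) x)$ on the Hadamard manifold $X$: along any sequence $x_n \to \xi \in \partial_\infty X$ for which $d_\nu(x_n, \rho(\gamma_i) x_n)$ is bounded, $\rho(\gamma_i)$ must fix $\xi$, so boundedness of $F(x_n)$ would force a common boundary fixed point, contradicting the choice of the $\gamma_i$.

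Next I would fix a base point $z_0 \in K$. For any $z \in K$, the Lipschitz hypothesis gives $d_\nu(f(z),f(z_0)) \le C\,\textrm{diam}_\mu(K)$, so it suffices to show $f(z_0)$ lies in a compact subset of $X$ depending only on $K$ and $C$. By equivariance, $F(f(z_0)) = \sum_i d_\nu(f(z_0), f(\gamma_i z_0))$. Enlarging $K$ to the compact set $K^* = K \cup \gamma_1 K \cup \cdots \cup \gamma_N K$, one observes that $f$ remains $C$-Lipschitz on each piece $\gamma_i K$, via
\[
d_\nu(f(\gamma_i z),f(\gamma_i z')) = d_\nu(\rho(\gamma_i)f(z),\rho(\gamma_i)f(z')) = d_\nu(f(z),f(z')) \le C\,d_\mu(\gamma_i z,\gamma_i z').
\]
Joining $z_0$ to $\gamma_i z_0$ in $\tilde{\Sigma}$ by a path covered by finitely many overlapping $\Gamma$-translates of $K$, and chaining the local Lipschitz bounds across the overlaps, gives $d_\nu(f(z_0),f(\gamma_i z_0)) \le C \cdot L_i$ with $L_i$ depending only on $\mu,z_0,\gamma_i$. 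Properness of $F$ then forces $f(z_0)$ into a compact $\Omega_0 \subset X$ depending only on $\rho,K,C,z_0$, and one takes $\Omega(K,C)$ to be the closed $C\,\textrm{diam}_\mu(K)$-neighbourhood of $\Omega_0$.

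The main obstacle is the chaining argument: the Lipschitz hypothesis is local to $K$, so controlling $d_\nu(f(z_0),f(\gamma_i z_0))$ when $\gamma_i z_0$ lies outside $K$ requires either enlarging $K$ to cover the joining paths (ensuring via equivariance that the Lipschitz constant is preserved on each translate) or a covering argument tailored to how $K$ sits inside $\tilde{\Sigma}$. In practice one is free to reduce to a convenient $K$—for instance one containing a fundamental domain—so this is a technicality, but it is what prevents a one-line proof.
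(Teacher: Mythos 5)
Your argument is correct and is essentially the paper's own Donaldson-type argument: irreducibility plus compactness of $\partial_\infty X$ yields finitely many group elements whose displacement at $f(z_0)$ is bounded via equivariance and the Lipschitz hypothesis, which prevents the image from escaping toward the boundary — you package this as a proper displacement function $F$, while the paper instead excludes, for each $\xi\in\partial_\infty X$, a neighbourhood $B_\xi$ with $d_\nu(X\cap B_\xi,\rho(\gamma)(X\cap B_\xi))$ larger than the Lipschitz bound and finishes by compactness of the boundary. The chaining worry you flag (Lipschitz control only on $K$ when comparing $f(z_0)$ with $f(\gamma_i z_0)$) is equally present in the paper's proof, whose lifted loops from $x$ to $\gamma x$ leave $K$; in all applications the maps are globally Lipschitz thanks to the uniform derivative bounds, so your fundamental-domain enlargement is consistent with the intended reading of the hypothesis rather than an extra gap.
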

This is essentially carried out in the proof of Theorem 5.1 in \cite{S} (which is a modification of the argument from the main theorem of \cite{D}), but with some slightly different assumptions. We sketch a proof for the reader's convenience.
\begin{proof}
We show the claim holds for a single point $\xi$, and then the Lipschitz control promotes the result to general compact sets. Since $\rho$ is irreducible, there exists $\gamma\in \Gamma$ such that $\rho(\gamma)\xi\neq \xi$. For each $x\in K$, $\gamma$ may be represented by a loop $\gamma_x:[0,L_x]\to \Sigma$ based at $\pi(x)$. Since $K$ is compact, there is an $L>0$ such that $L_x\leq L$ for all $x$. Now choose a neighbourhood $B_\xi\subset X\cup\partial_\infty X$ such that $$d_\nu(X\cap B_\xi, \rho(\gamma) X\cap B_\xi) > CL.$$ This implies that for any $x\in K$, $f(x)$ cannot lie in $B_\xi$. Repeating this procedure and using compactness of $\partial_\infty X$, we get a neighbourhood of $\partial_\infty X$ in $X\cup \partial_\infty X$ that $f(K)$ cannot enter. We then take $\Omega(K,C)$ to be the complement of this neighbourhood. 
\end{proof}

\begin{lem}\label{compactvar1}
Let $K\subset \Sigma$ be compact. Then there is a choice of harmonic maps $f_{\mu}$ that vary continuously on lifts of $K$ inside $(\tilde{\Sigma},\tilde{\mu})$ in the $C^\infty$ topology.
\end{lem}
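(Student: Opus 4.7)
The plan is to prove sequential continuity: given $\mu_n\to\mu$ in $\mathcal{T}(\Gamma)$, show that (after an appropriate choice in the reducible case) $f_{\mu_n}\to f_\mu$ in $C^\infty_{\text{loc}}$ on $\tilde{\Sigma}$, which specializes to the desired continuity on any lift $\tilde K$ of $K$. The first ingredient is already in place: by Proposition \ref{parun}, once $n$ is large so that $\mu_n$ lies in a fixed Teichm\"uller ball around $\mu$, all covariant derivatives $(\nabla^{\mu_n^f,\nu})^{(k)}df_{\mu_n}$ are bounded uniformly in $n$, where $\mu_n^f$ is the flat-cylinder metric associated to $\mu_n$. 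Since $\mu_n^f\to\mu^f$ smoothly on compact subsets of $\tilde\Sigma$, these bounds transfer to uniform $C^k$-estimates on $\tilde K$ relative to a fixed background metric.

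For Arzel\`a--Ascoli to give subsequential convergence, the images $f_{\mu_n}(\tilde K)$ must lie in a common compact subset of $X$. If $\rho$ is irreducible, this is immediate from Lemma \ref{don} with $C$ the uniform Lipschitz constant produced by Step 1. If $\rho$ is reducible with $\rho(\Gamma)$ stabilizing a geodesic $\beta\subset X$, the uniqueness statement of Theorem \ref{S} allows translation of $f_{\mu_n}$ along $\beta$ without losing harmonicity, so we fix a basepoint $\tilde p_0\in\tilde\Sigma$ and precompose by a translation to normalize $f_{\mu_n}(\tilde p_0)$ to the nearest point on $\beta$ to a reference point. This gives uniform image control on any compact set.

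With both pieces in hand, a standard diagonal extraction yields a subsequence converging in $C^\infty_{\text{loc}}$ to a smooth $\rho$-equivariant limit $f_\infty:(\tilde\Sigma,\tilde\mu)\to(X,\nu)$; passing to the limit in the harmonic map equation, whose coefficients depend smoothly on the source metric (recall that the tension vanishing condition is local and purely in terms of $\mu$ and its derivatives), $f_\infty$ is harmonic. In the irreducible case, uniqueness of the finite-energy harmonic map (see \cite[Proposition 3.8]{S}) forces $f_\infty=f_\mu$; in the reducible case, the normalization pins down the correct representative. Since every subsequence of $(f_{\mu_n})$ admits a further subsequence converging to $f_\mu$, the whole sequence converges, which is the claimed continuity.

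The main obstacle is not the compactness extraction but the reducible case, where one must make the translation choice consistent as $\mu$ varies in order to rule out sudden jumps in the translation parameter. The normalization by nearest-point projection onto $\beta$ of $f_{\mu}(\tilde p_0)$ is continuous in $\mu$ because the projection to a geodesic is a smooth map wherever defined and, by the pointwise equivariance and Step 1, $f_\mu(\tilde p_0)$ already varies continuously for an arbitrary preliminary choice of $f_\mu$. All other steps are routine given Proposition \ref{parun} and Lemma \ref{don}.
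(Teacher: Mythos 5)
Your proposal is correct and follows essentially the same route as the paper: uniform derivative bounds from Proposition \ref{parun}, image compactness via Lemma \ref{don} (or recentering by translations along the invariant geodesic in the reducible case), Arzel\`a--Ascoli plus a compact exhaustion, and uniqueness of the finite-energy harmonic map to identify the limit with $f_\mu$; phrasing it as a subsequence argument rather than the paper's contradiction is only cosmetic. Two small slips: the translations act on the target, so you should \emph{post}compose $f_{\mu_n}$ with them, and the remark that $f_\mu(\tilde p_0)$ varies continuously for an arbitrary preliminary choice is neither true nor needed---the normalization itself, combined with uniqueness up to translation, is what pins down the limit.
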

\begin{proof}
We argue by contradiction. First if $\rho$ is irreducible, suppose there exists $\delta>0$ and sequences $(k_n)_{n=1}^\infty\subset K$, $r_n\to \infty$, and $\mu_n\to\mu$ such that 
\begin{equation}\label{8}
    d(f_{\mu_n},f_{\mu})\geq \delta
\end{equation}
 for all $n$. Then by Lemma \ref{parun} we have uniform derivative bounds on each $f_{\mu_n}$ and by Lemma \ref{don} they all take a lift of $K$ to $\tilde{\Sigma}$ into a compact subset of $(X,\nu)$. By Arz{\`e}la-Ascoli we see the $f_{\mu_n}$ $C^\infty$-converge in $K$ along a subsequence to a limiting harmonic map $f_\infty$. By equivariance, we have this same convergence on the whole preimage of $K$. 

We now show $f_\infty=f$, which contradicts (\ref{8}). Taking a compact exhaustion of $\Sigma$ and applying the same argument on each compact set, the maps $f_{\mu_n}$ subconverge on compact subsets of $\tilde{\Sigma}$ to a limiting finite energy harmonic map $f_\infty'$ that agrees with $f_\infty$ on lifts of $K$. By uniqueness for finite energy maps we get $f_\infty=f$.

If $\rho$ is reducible, we can recenter the harmonic maps via translations along the geodesic so that they take $K$ into a fixed compact set (see the proof of \cite[Propistion 5.1]{S} for this routine procedure), and then repeat the argument above.
\end{proof}
\end{subsection}

\begin{subsection}{Variations: infinite energy harmonic maps}
Now we treat harmonic maps for representations $\rho$ with hyperbolic monodromy at the cusp. Recall the map $\varphi$ from Section 3.1, which for a metric $\mu$ we now denote $\varphi_\mu:(\tilde{\Sigma}_2,\tilde{\mu})\to (X,\nu)$. We emphasize that the boundary curve for $(\tilde{\Sigma_2},\tilde{\mu})$ is varying with $\tilde{\mu}$ 
\begin{lem}\label{approxcon}
Near a base hyperbolic metric $\mu_0$, $\varphi_\mu$ can be chosen so that the association $\mu\to \varphi_\mu$ is continuous in the $C^\infty$ topology .
\end{lem}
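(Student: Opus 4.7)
The plan is to reduce to a fixed domain via an equivariant family of diffeomorphisms, and then apply a compactness-plus-uniqueness argument for the Dirichlet problem, mirroring Lemma \ref{compactvar1} but simplified by the fact that the boundary values are compactly supported on the fixed geodesic $\beta$.

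First I would fix a small neighborhood $\mathcal{U}$ of $\mu_0$ in $\mathcal{T}(\Gamma)$ and construct a smooth family of $\Gamma$-equivariant diffeomorphisms $\Psi_\mu : \tilde{\Sigma} \to \tilde{\Sigma}$ with $\Psi_{\mu_0} = \mathrm{id}$, varying smoothly in $\mu \in \mathcal{U}$, such that $\Psi_\mu$ identifies the hyperbolic cusp coordinates on $U(\tau_{\mu_0})$ with those on $U(\tau_\mu)$ (modulo the rescaling $\tau_{\mu_0} \mapsto \tau_\mu$) and is the identity on a fixed large compact subsurface. In particular $\Psi_\mu$ takes $\tilde{\Sigma}_2^{\mu_0}$ diffeomorphically onto $\tilde{\Sigma}_2^{\mu}$, and takes the equivariant boundary parametrization by $\alpha$ for $\mu_0$ to that for $\mu$. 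This is standard: $\tau_\mu$ is a smooth function on Teichm{\"u}ller space, and one can interpolate a smooth $x$-rescaling in an annular collar of the boundary horocycle.

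Next I would set $\tilde{\varphi}_\mu := \varphi_\mu \circ \Psi_\mu$, which is defined on the fixed domain $\tilde{\Sigma}_2^{\mu_0}$, equivariant, harmonic with respect to the pulled-back metric $\Psi_\mu^*\tilde{\mu}$ (which depends smoothly on $\mu$), and has Dirichlet boundary values that vary smoothly in $\mu$ and always trace the fixed geodesic $\beta$. By the energy-minimizing property, comparison with a fixed smooth equivariant extension of $\alpha$ into the interior gives uniform energy bounds; moreover the image of each $\tilde{\varphi}_\mu$ is contained in a fixed tubular neighborhood of $\beta$ (by convex-hull considerations, or by projecting onto $\beta$ and using the energy minimization). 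Cheng's Lemma then yields uniform interior Lipschitz bounds, boundary regularity gives uniform Lipschitz bounds up to the boundary, and standard elliptic Schauder estimates for the harmonic map system (with coefficients varying smoothly in $\mu$) promote this to uniform $C^k$ bounds on compact subsets of $\tilde{\Sigma}_2^{\mu_0}$.

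Arzel{\`a}--Ascoli then extracts from any sequence $\mu_n \to \mu_0$ a subsequence along which $\tilde{\varphi}_{\mu_n}$ converges in $C^\infty$ on compact sets, and by equivariance on all of $\tilde{\Sigma}_2^{\mu_0}$, to a limit $\tilde{\varphi}_\infty$. The limit is $\tilde{\mu}_0$-harmonic and equivariant with boundary values equal to those of $\varphi_{\mu_0}$; convexity of the distance function between harmonic maps into the $\CAT(-1)$ target $(X,\nu)$ forces uniqueness for the equivariant Dirichlet problem, so $\tilde{\varphi}_\infty = \varphi_{\mu_0}$. Since every subsequence has a further subsequence with the same limit, the whole family converges, and $\varphi_\mu = \tilde{\varphi}_\mu \circ \Psi_\mu^{-1} \to \varphi_{\mu_0}$ in $C^\infty$ on compacta. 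The cusp piece $\varphi_\mu(x,y) = \alpha(x)$, expressed in the cusp coordinates for $\mu$ and smoothed across the horocycle $\{y=2\}$ by a fixed cutoff, automatically varies smoothly with $\mu$, completing the claim. The only real obstacle is step one — organizing the moving domain $\tilde{\Sigma}_2^\mu$ and moving boundary data into a genuine smooth family over $\mathcal{U}$; once that bookkeeping is in place the argument reduces to standard elliptic continuity.
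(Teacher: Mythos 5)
Your proposal follows essentially the same route as the paper: identify the moving domains $(\Sigma_2,\mu)$ with a fixed one by diffeomorphisms converging to the identity (the paper uses the normalized quasiconformal maps $f^\mu$ and takes $\psi_\mu=\varphi_{\mu_0}\circ f^\mu$ as the competitor), obtain uniform energy bounds from the energy-minimizing property, and conclude by compactness plus uniqueness of the equivariant Dirichlet problem, exactly as in Lemma \ref{compactvar1}. So the architecture is right and matches the paper's proof.

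One intermediate claim, however, is false as stated and is the step you lean on to invoke Cheng's lemma: the image of $\tilde{\varphi}_\mu$ is \emph{not} contained in a fixed tubular neighbourhood of $\beta$. The boundary values are only equal to $\alpha$ on one lift of the horocycle; on the other components of $\partial\tilde{\Sigma}_2$ they are the $\rho(\gamma)$-translates, so convex-hull considerations only place the image in the convex hull of $\bigcup_{\gamma}\rho(\gamma)\beta$, which is neither a neighbourhood of $\beta$ nor contained in any metric ball (and Cheng's lemma, as used in the paper, needs the image of a domain ball to lie in a target ball of fixed radius). The step is repairable without new ideas: the uniform total energy bound already gives uniform energy-density (hence Lipschitz) bounds via the Harnack/Schoen--Yau interior estimate quoted in Proposition \ref{parun}, which requires no image bound; since $\Sigma_2$ is compact and the boundary data lies on the fixed compact arc $\alpha([0,\tau])\subset\beta$ (up to the translates), every point of a fundamental domain is at bounded distance from the boundary, so images of fixed compacta land in a fixed compact subset of $X$, which is all that Arzel\`a--Ascoli needs. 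With that substitution your argument goes through, and also note that your ``fixed'' competitor must in fact vary slightly with $\mu$ (the speed of $\alpha$ is $\ell/\tau_\mu$), which is precisely the bookkeeping the paper's $\psi_\mu=\varphi_{\mu_0}\circ f^\mu$ handles.
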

\begin{proof}
Since the metrics are varying smoothly, the lifts of $\mu$-horocycles vary smoothly with $\mu$. Indeed, in conformal cusp coordinates as in (\ref{9}), the $\mu$-horocycles are just curves with $y_\mu$ constant. Smoothness here thus comes from the regularity theory for the Beltrami equation (if we follow the approach of Ahlfors and Bers for finding isothermal coordinates on a hyperbolic surface). 

From the standard arguments (Section 3.1), the result amounts to choosing $\mu\to \varphi_\mu$ so that we have a uniform total energy bound on compacta, independent of $\mu$, and such that the boundary data varies continuously. It would follow that as $\mu\to \mu_0$, the harmonic maps subconverge to a harmonic map, and continuity in the boundary values shows this is exactly $\varphi$. One can then modify the contradiction argument from Lemma \ref{compactvar1} to see $C^\infty$ convergence on compacta along the whole sequence. 

By the energy minimizing property, it suffices to construct a family of maps $\psi_\mu$ with suitably chosen boundary values and uniformly controlled energy. To build $\psi_\mu$, let $f^\mu$ be the unique quasiconformal diffeomorphism between $(\Sigma_2,\mu)$ and $(\Sigma_2,\mu_0)$ that, in the cusp coordinates, takes $(i2,\tau_\mu+i2,\infty)\mapsto (i2,\tau_{\mu_0}+i2,\infty)$. Then $\psi_\mu=\varphi_{\mu_0} \circ f^\mu$ has the correct boundary values. By our choice of normalizations, $f^\mu$ converges to the identity as $\mu\to \mu_0$. This gives an energy bound on $f^\mu$, and moreover we get uniform bounds for $\psi_\mu$.
\end{proof}
We can now prove the analogue of Lemma \ref{compactvar1} for infinite energy harmonic maps.
\begin{lem}\label{vary}
Let $K\subset \Sigma$ be compact. Then there is a choice of harmonic maps $f_{\mu}$ that vary continuously on lifts of $K$ in the $C^\infty$ topology.
\end{lem}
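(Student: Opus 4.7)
The plan is to mimic the proof of Lemma \ref{compactvar1} but replace the role of Proposition \ref{parun} with the uniform energy estimate (\ref{10}) from \cite{S}, now made uniform in $\mu$ by Lemma \ref{approxcon}. Argue by contradiction: suppose there exist $\mu_n\to\mu_0$, lifts $k_n$ of points of $K$ in $(\tilde\Sigma,\tilde\mu_n)$, and $\delta>0$ so that $d_\nu(f_{\mu_n}(k_n),f_{\mu_0}(k_n))\geq \delta$ (and similarly for higher derivatives in the $C^\infty$ formulation). The goal is to extract a subsequence along which $f_{\mu_n}$ converges to $f_{\mu_0}$ on compacta and derive a contradiction.

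First I would establish uniform energy bounds on compact subsets of $\tilde\Sigma$. Applying (\ref{10}) to each $\mu_n$, one gets
$$\int_{\Sigma_s} e(\mu_n, f_{\mu_n})\, dA_{\mu_n} \leq \int_{\Sigma_2} e(\mu_n,\varphi_{\mu_n})\, dA_{\mu_n} + \tfrac{(s-2)\ell^2}{2\tau_{\mu_n}},$$
and Lemma \ref{approxcon} together with continuity of $\tau_\mu$ in $\mu$ bounds the right-hand side uniformly for $\mu_n$ near $\mu_0$ and $s$ fixed. Passing to the flat-cylinder metric $\mu_n^f$ (which has uniform injectivity radius independent of $n$), the Harnack inequality and elliptic regularity theory (as in Proposition \ref{parun}) then give uniform bounds on the energy density and on all higher covariant derivatives of $f_{\mu_n}$ over any fixed precompact set in $\tilde\Sigma$. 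For the image of a compact set to stay in a fixed compact subset of $(X,\nu)$, use Lemma \ref{don} in the irreducible case; in the reducible case, recenter each $f_{\mu_n}$ by a translation along the stabilized geodesic, exactly as in the proof of Proposition 5.1 in \cite{S}.

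With these ingredients, Arzel\`a--Ascoli combined with a diagonal argument over an exhaustion of $\tilde\Sigma$ by compact sets yields a subsequence along which $f_{\mu_n}$ converges in $C^\infty_{\mathrm{loc}}$ to a $\rho$-equivariant harmonic map $f_\infty:(\tilde\Sigma,\tilde\mu_0)\to(X,\nu)$. To identify $f_\infty$ with $f_{\mu_0}$ I would invoke the uniqueness statement in Theorem \ref{S}: it suffices to verify that the Hopf differential of $f_\infty$ has a pole of order $2$ at the cusp with residue $-\ell^2/16\pi^2$ (i.e.\ twist parameter $\theta=0$). This is inherited in the limit because each $f_{\mu_n}$ is constructed as a limit of approximating maps $f_r$ whose boundary data is the geodesic projection $\alpha$, and the inequality (\ref{10}) together with Lemma \ref{approxcon} passes to the limit, so $f_\infty$ satisfies the same asymptotic control at the cusp described by Proposition \ref{asym}. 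Uniqueness (modulo the geodesic translation already fixed by recentering in the reducible case) gives $f_\infty=f_{\mu_0}$, contradicting the assumption.

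The main obstacle is the cusp asymptotics: one must be confident that the estimate (\ref{10}), which bounds total energy on a truncated surface in terms of the approximation map $\varphi_\mu$, remains valid uniformly as $\mu$ varies, and that the limit $f_\infty$ genuinely inherits the correct $\theta=0$ behavior at the puncture rather than some other twist parameter. Both points reduce to the continuous dependence of $\varphi_\mu$ and $\tau_\mu$ established in Lemma \ref{approxcon}, so the remaining work is careful bookkeeping of the approximation scheme of Section 5 of \cite{S} in a $\mu$-uniform manner.
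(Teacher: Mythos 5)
Your compactness scheme coincides with the paper's: the uniform bound on $\int_{\Sigma_s}e(\mu,f_\mu)\,dA_\mu$ from (\ref{10}) made uniform via Lemma \ref{approxcon}, the derivative bounds as in Proposition \ref{parun} in the flat-cylinder metric, Lemma \ref{don} in the irreducible case and recentering along the geodesic in the reducible case, and Arzel\`a--Ascoli with a diagonal argument to extract a limiting equivariant harmonic map $f_\infty$. Up to that point you are reproducing the intended argument.

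The gap is in the identification $f_\infty=f_{\mu_0}$. You assert that the cusp asymptotics of Proposition \ref{asym}, equivalently the order-$2$ pole with residue $-\ell^2/16\pi^2$, are ``inherited in the limit'' because (\ref{10}) and Lemma \ref{approxcon} pass to the limit. But $C^\infty$ convergence on compacta of the punctured surface does not control behaviour at the puncture: pole orders and residues of the Hopf differentials are not continuous under locally uniform convergence without uniform estimates in the cusp, and the uniform cusp estimates of this paper (Lemma \ref{hypun}, Lemma \ref{varcusp2}) are proved \emph{after} this lemma and use it, so invoking them would be circular. Likewise, a Fatou-type passage of (\ref{10}) only gives a linear energy-growth bound for $f_\infty$; turning that into the precise residue (hence into membership in the family classified by Theorem \ref{S} with $\theta=0$) would require redoing a substantial part of the asymptotic analysis of \cite[Section 5]{S}. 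The paper's identification step uses a different and cheaper mechanism that your proposal omits: by \cite[Lemma 5.2]{S} the distance functions $d(f_{\mu_n},\varphi_{\mu_n})$ are uniformly bounded and (by subharmonicity) maximized on $\Sigma_2$; uniform convergence on $\Sigma_2$ then shows $d(f_\infty,\varphi)$ is globally finite, and the twist-parameter discussion after Theorem \ref{S} says $f_{\mu_0}$ is the \emph{unique} harmonic map at bounded distance from $\varphi$, since a nontrivial fractional Dehn twist forces the distance to blow up toward the puncture. You need this (or an equivalent quantitative cusp argument) to close the proof; as written, the final step does not follow.
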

\begin{proof}
From the estimate (\ref{10}) and the Fatou lemma we get 
$$\int_{\Sigma_s} e(\mu,f_\mu) dA_\mu \leq \int_{\Sigma_2} e(\mu,\varphi_\mu) dA_\mu + \frac{(r-2)\ell^2}{2\tau_\mu}.$$ Via the lemma above,
$$\int_{\Sigma_s} e(\mu,f_\mu) dA_\mu \leq C + \frac{2(r-2)\ell^2}{2\tau_{\mu_0}},$$ for $\mu$ close enough to $\mu_0$. Then from the discussion in Proposition \ref{parun}, we get uniform control on all derivatives on $K$. 

If $\rho$ is irreducible we apply Lemma \ref{don}, and if $\rho$ is reducible we rescale by translations along the geodesic so that they take $K$ into a fixed compact set. The argument from Section 3.1 then shows that for any sequence $\mu_n\to \mu$, the harmonic maps subconverge on compacta in the $C^\infty$ sense to a limiting harmonic map $f_\infty$. From \cite[Lemma 5.2]{S}, $d(f_{\mu_n},\varphi_{\mu_n})$ is uniformly bounded, and an investigation of the proof of this lemma shows it is maximized on $\Sigma_2$. By uniform convergence on compacta, $d(f_{\infty},\varphi)$ is also (non-strictly) maximized on $\Sigma_2$, and hence it is globally finite. According to the classification in Theorem \ref{S} and the explanation of the twist parameter, $f_\mu$ is the only harmonic map with this property: if we precompose an equivariant map with a non-trivial fractional Dehn twist, the distance between the original map and the new map grows without bound as we limit toward a lift of the puncture on the boundary at infinity.
\end{proof}
We deduce the following.
\begin{lem}\label{hypun}
Given a metric $\mu_0$, there exists a uniform $C_k>0$ such that $$|(\nabla^{\mu^f,\nu})^{(k)}df_\mu|_{\mu^f} \leq C_k$$ everywhere on $\Sigma$, with $C_k$ depending on the Teichm{\"u}ller distance to $\mu_0$.
\end{lem}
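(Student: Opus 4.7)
The claim extends the derivative control from Lemma \ref{vary}, which holds on compact sets, to all of $\Sigma$, so the substance of the proof lies in the cuspidal region. On any fixed compact $K\subset\Sigma$ the bounds follow immediately from Lemma \ref{vary} together with a finite covering of a Teichmüller ball around $\mu_0$: the $C^\infty$-continuity of $\mu\mapsto f_\mu|_K$ yields uniform bounds on each small neighborhood, and a finite covering converts these into a bound growing with the Teichmüller distance to $\mu_0$.

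In the cusp, my plan is to work exclusively with the flat-cylinder metric $\mu^f$, which is eventually isometric to a flat half-cylinder of circumference $\tau_\mu$. Since $\tau_\mu$ depends continuously on $\mu$, it is uniformly bounded above and below over a Teichmüller ball, and the injectivity radius of $(\Sigma,\mu^f)$ is uniformly bounded below in the cusp. The first step is a uniform pointwise bound on the energy density. By Cheng's lemma, it suffices to control the diameter of $f_\mu(B_{\mu^f}(z,1))$ uniformly for $z$ in the cusp. From \cite[Lemma 5.2]{S}, $d_\nu(f_\mu,\varphi_\mu)$ is uniformly bounded on $\Sigma$, and by construction $\varphi_\mu$ parametrizes a geodesic axis at constant speed $\ell/\tau_\mu$; combining gives the required image bound on $f_\mu$, and Cheng's lemma yields a uniform bound on $e(\mu^f,f_\mu)$.

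Higher-order bounds then follow from elliptic regularity applied to the harmonic map system $\tau(\mu^f,\nu,f_\mu)=0$. Because $\mu^f$ is uniformly smooth on the cusp and the target $(X,\nu)$ is fixed, the Schauder constants are uniform in $\mu$, and a standard bootstrap produces $|(\nabla^{\mu^f,\nu})^{(k)}df_\mu|_{\mu^f}\leq C_k$ at every order, with $C_k$ depending on the Teichmüller distance to $\mu_0$ through the covering argument and through the continuous dependence of $\tau_\mu$ and $\varphi_\mu$ on $\mu$ (Lemma \ref{approxcon}).

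The main obstacle I anticipate is ensuring that the image bound for $f_\mu$ is genuinely uniform in the reducible case, where in Lemma \ref{vary} the maps were renormalized by translations along a common geodesic. One has to choose $\varphi_\mu$ compatibly with these renormalizations so that $d(f_\mu,\varphi_\mu)$ stays uniformly bounded in $\mu$; this amounts to a minor compatibility check of the construction from Section 3.1, after which the argument above proceeds verbatim.
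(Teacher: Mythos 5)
Your proposal follows essentially the same route as the paper: uniform control of $d_\nu(f_\mu,\varphi_\mu)$ (via Lemma \ref{vary} and the maximization on $\Sigma_2$), Cheng's lemma applied in the flat-cylinder metric to bound the energy density, and elliptic bootstrapping for the higher-order derivative bounds. Your extra remarks on the compact part, the covering of a Teichm\"uller ball, and the renormalization in the reducible case are just expansions of the same argument, so the proposal is correct.
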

\begin{proof}
 By uniform convergence on compacta, we have uniform control on the distance to $\varphi_\mu$. We then couple the uniform energy bounds on $\varphi_\mu$ with Cheng's lemma to get uniform energy bounds, and then we appeal to the elliptic theory (as we have done many times at this point).
\end{proof}
\end{subsection}
\begin{subsection}{Energy minimizing properties}
It is well known that finite energy equivariant harmonic maps minimize the total energy among other equivariant maps. Here we show that this extends in some sense to the infinite energy setting. 
\begin{defn}\label{convalpha}
Let $g:(\tilde{\Sigma},\tilde{\mu})\to (X,\nu)$ be $\rho$-equivariant and let $\alpha$ be a parametrization of the geodesic axis of the image of the peripheral, say $\beta$. We say $g$ converges at $\infty$ to $\alpha$ if, after precomposing with the conformal mapping $i_r:\mathcal{C}\to \tilde{\Sigma}_r\backslash\tilde{\Sigma}_{r-1}$, $g\circ i_r:\mathcal{C}\to \beta$ converges in $C^0$ as $r\to\infty$ to a mapping $\tilde{\alpha}:\mathcal{C}\to\beta$ given by 
\begin{equation}\label{6}
    \tilde{\alpha}(x,y)=\alpha'(x),
\end{equation}
where $\alpha'(x)$ is some translation of $\alpha$ along $\beta$.
\end{defn}
\begin{prop}\label{min}
 Let $f$ be the harmonic map from Theorem \ref{S} whose Hopf differential has real residue at the cusp. Suppose that a locally Lipschitz map $g:(\tilde{\Sigma},\tilde{\mu})\to (X,\nu)$ converges at $\infty$ to $\alpha$. Then $e(\mu,f)-e(\mu,g)$ is integrable with respect to $\mu$ and $$\int_\Sigma e(\mu,f)-e(\mu,g) dA_\mu \leq 0.$$
\end{prop}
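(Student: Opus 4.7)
The strategy is to apply the Dirichlet energy-minimization principle of the harmonic map $f$ on truncated subsurfaces $\tilde\Sigma_r$, using a harmonic extension on the collar as the comparison map, and then pass to the limit $r\to\infty$. The elliptic and parabolic monodromy cases admit finite-energy harmonic maps and reduce to the classical Dirichlet principle, so assume $\rho(\zeta)$ is hyperbolic. Work in the flat-cylinder metric $\mu^f$ (conformally equivalent to $\mu$, so the total energy is unchanged) so that the cusp is a genuine flat half-cylinder, and normalize by a translation along $\beta$ so that $g$ and $f$ converge at infinity to the same constant-speed parametrization $\alpha$.

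For each $r \geq 3$, let $H_r : \tilde A_r \to X$ denote the equivariant harmonic map on the collar $\tilde A_r = \tilde\Sigma_r \setminus \tilde\Sigma_{r-1}$ with boundary values $g|_{\partial \tilde\Sigma_{r-1}}$ and $f|_{\partial \tilde\Sigma_r}$, and define the equivariant comparison map $h_r : \tilde\Sigma_r \to X$ by $h_r = g$ on $\tilde\Sigma_{r-1}$ and $h_r = H_r$ on $\tilde A_r$. Since $f|_{\tilde\Sigma_r}$ is harmonic and matches $h_r$ on $\partial\tilde\Sigma_r$, the Dirichlet principle yields
$$ \int_{\Sigma_r} e(\mu, f)\, dA_\mu \leq \int_{\Sigma_r} e(\mu, h_r)\, dA_\mu, $$
which rearranges to
$$ \int_{\Sigma_{r-1}}[e(\mu, f) - e(\mu, g)]\, dA_\mu \leq \int_{A_r} e(\mu, H_r)\, dA_\mu - \int_{A_r} e(\mu, f)\, dA_\mu. \qquad (\star) $$

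The next step is to show both slab energies on the right of $(\star)$ converge to $\ell^2/(2\tau)$ as $r\to\infty$. For $f$, Proposition \ref{asym} gives $\int_{A_r} e(\mu^f, f)\, dA_{\mu^f} = \ell^2/(2\tau) + O(e^{-cr})$. For $H_r$, both boundary data converge in $C^0$ to $\alpha$; after rescaling $A_r$ to the unit cylinder $\mathcal{C}$, the $H_r$ subconverge to the harmonic map with $\alpha$ boundary values on both sides---which is $\tilde\alpha$, with energy precisely $\ell^2/(2\tau)$. Continuity of the Dirichlet minimum under $C^0$-variation of equivariant boundary data, verified via the maximum principle and interior elliptic regularity, delivers $\int_{A_r} e(\mu^f, H_r)\, dA_{\mu^f} \to \ell^2/(2\tau)$. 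Passing to the limsup in $(\star)$ gives $\limsup_r \int_{\Sigma_{r-1}}[e(\mu, f) - e(\mu, g)]\, dA_\mu \leq 0$.

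Integrability of the positive part is handled independently. Equivariance forces $\int_0^\tau |\partial_x g(x, y)|\, dx \geq d(g(0, y), \rho(\zeta) g(0, y)) \geq \ell$ for a.e. $y$, so Cauchy-Schwarz upgrades this to $\int_{A_r} e(\mu^f, g)\, dA_{\mu^f} \geq \ell^2/(2\tau)$. Paired with the asymptotic for $f$, this gives $\int_{A_r}[e(\mu, f) - e(\mu, g)]\, dA_\mu \leq O(e^{-cr})$, summable over $r$, so $[e(\mu, f) - e(\mu, g)]^+$ is integrable, the signed integral is well-defined in $[-\infty, 0]$, and the stated inequality follows.

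\textbf{Main obstacle.} The analytically delicate step is the convergence $\int_{A_r} e(\mu^f, H_r) \to \ell^2/(2\tau)$: a direct pointwise argument fails because $|\partial_x g|$ on $\partial \tilde\Sigma_{r-1}$ is not uniformly controlled in $r$ under the locally-Lipschitz hypothesis. The argument must rely on the variational characterization of $H_r$ together with the equivariance-based uniform lower bound, which prevents the harmonic extensions from degenerating to lower-energy configurations as the boundary data varies. A secondary and more routine task is maintaining $\rho$-equivariance throughout the construction, accomplished by performing all interpolations on $\rho(\zeta)$-fundamental domains of the collars.
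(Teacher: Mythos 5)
Your proposal has two genuine gaps, both in the collar-bridge step. First, the normalization ``translate along $\beta$ so that $g$ and $f$ converge at infinity to the same parametrization'' is not available in the main (irreducible) case: postcomposing $g$ (or $f$) with a nontrivial translation along $\beta$ destroys $\rho$-equivariance unless that translation commutes with all of $\rho(\Gamma)$, which forces $\rho$ to stabilize $\beta$, i.e.\ to be reducible. The hypothesis (Definition \ref{convalpha}) only says $g$ converges to \emph{some} translate $\alpha'$ of $\alpha$, and the limit parametrization $\alpha''$ of $f$ may differ from it by a translation of length $d_0>0$. In that case the harmonic bridges $H_r$ on the unit-height collars converge (by the convexity/maximum-principle argument you invoke) to the tilted interpolation along $\beta$, whose energy is $\frac{\ell^2}{2\tau}+\frac{\tau d_0^2}{2}$, not $\frac{\ell^2}{2\tau}$; your inequality $(\star)$ then only yields $\int_\Sigma e(\mu,f)-e(\mu,g)\,dA_\mu\leq \frac{\tau d_0^2}{2}$, which is strictly weaker than the statement. (This particular defect is repairable by letting the bridge have height growing with $r$, so the tilt cost $\sim \tau d_0^2/\mathrm{height}$ vanishes.) Second, and more seriously, the convergence $\int_{A_r}e(\mu^f,H_r)\,dA_{\mu^f}\to \frac{\ell^2}{2\tau}$ that you identify as the main obstacle is not just delicate but false as a general principle: the Dirichlet energy of a harmonic extension is governed by the trace ($H^{1/2}$-type) energy of the boundary data, not by its $C^0$ norm, and the hypothesis on $g$ gives only $C^0$ convergence on the circles $\partial\tilde\Sigma_{r-1}$ with no control on $\partial_x g$ there. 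A locally Lipschitz $g$ converging to $\alpha'$ in $C^0$ can oscillate on every slice so that the bridge energies do not converge; your suggested remedy (``the variational characterization plus the equivariance lower bound prevents degeneration'') addresses the wrong failure mode, since the danger is energy blow-up from boundary oscillation, not degeneration to lower energy. To make this route work you would need, e.g., a good-slice selection of radii where $\int_0^\tau|\partial_x g|^2\,dx$ is controlled by the nearby average of $e(\mu^f,g)$ (the case where no such slices exist being exactly the case $\int e(\mu,g)=+\infty$, where the inequality is vacuous), and this is absent from the proposal.

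For comparison, the paper's proof sidesteps both issues: it replaces $g$ on all of $\Sigma_r$ by the harmonic map $g_r$ with boundary values $g|_{\partial\tilde\Sigma_r}$, so the comparison $\int_{\Sigma_r}e(\mu,g_r)\,dA_\mu\leq\int_{\Sigma_r}e(\mu,g)\,dA_\mu$ is immediate and no bridge energy ever needs to be computed; it then shows $g_r\to f$ locally uniformly using subharmonicity of $d(\varphi_r,g_r)$, Cheng's lemma, and the uniqueness in Theorem \ref{S} (which is insensitive to which translate of $\alpha$ appears at infinity), and concludes by Fatou after subtracting the reference density $\phi=\ell^2/2\tau^2$. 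Your treatment of the well-definedness of the integral is also looser than needed (the per-annulus signed bound does not by itself control $[e(\mu,f)-e(\mu,g)]^+$; one should argue on horizontal circles via the pointwise upper bound on $e(\mu^f,f)$ and the averaged lower bound on $e(\mu^f,g)$), but that part is minor and fixable.
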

\begin{remark}
The lemma holds provided $g$ is weakly differentiable and these weak derivatives are locally $L^2$. If $g$ is not harmonic, then this inequality is strict.
\end{remark}
\begin{proof}
 Let $g_r$ be the harmonic map with the same boundary values as $g$ on $\Sigma_r$ and set $\phi:(\tilde{\Sigma},\tilde{\mu})\to \mathbb{R}$ to be any $\Gamma$-invariant function that is equal to $\mu^{-1}\ell^2/2\tau_\mu$ in $\mu$-conformal coordinates after $\tilde{\Sigma}_2$. For example, we can take the $\mu$-energy of the harmonic map $\varphi$. Then $$\int_{\Sigma_r}e(\mu, g_r) dA_\mu \leq \int_{\Sigma_r}e(\mu,g)dA_\mu,$$ and hence 
 \begin{equation}\label{7}
     \liminf_{r\to\infty}\int_{\Sigma_r}e(g_r)-\phi dA_\mu \leq \int_{\Sigma}e(g)-\phi dA_\mu.
 \end{equation}
Without loss of generality, $\alpha$ is the limiting parametrization of the geodesic for $g$. Let $\varphi_r$ be the unique harmonic map on $\tilde{\Sigma}_r$ with boundary values $\alpha$ on $\partial \tilde{\Sigma}_r$, defined by using $\alpha$ on one lift and then extending equivariantly. The distance function $$p\mapsto d(\varphi_r(p),g_r(p))$$ is subharmonic and hence maximized on $\partial \Sigma_r$. By the convergence property of $g$, we can thus assume that for $r$ large enough, $$d(\varphi_r(p),g_r(p))\leq 1$$ for all $p$. By Cheng's lemma, we obtain uniform bounds on the energy density of $g_r$ in terms of that of $\varphi_r$ on compact sets. Since $\varphi_r\to f$ locally uniformly on compacta, $g_r$ also subconverges locally uniformly on compacta, and the limiting map is harmonic. As $g_r$ has bounded distance to $\varphi$, as above the uniqueness result Theorem \ref{S} shows the limit must be $f$. Returning to our integrals (\ref{7}), Fatou's lemma then yields 
\begin{align*}
    \int_\Sigma e(\mu,f) - \phi dA_\mu &= \int_{\Sigma_2} e(\mu,f) - \phi dA_\mu + \int_{\Sigma\backslash\Sigma_2} e(\mu,f)-\phi dA_\mu\\
    & = \int_{\Sigma_2} e(\mu,f) - \phi dA_\mu + \int_{2}^\infty \Big(\int_0^{\tau_\mu} e(\mu^f,f)(x,y) - \phi(x,y) dx\Big)dy \\
    &\leq \liminf_{r\to\infty }  \int_{\Sigma_r}e(g_r)-\phi dA_\mu. 
\end{align*}
(\ref{7}) then gives $$\int_\Sigma e(\mu,f) - \phi dA_\mu \leq \int_{\Sigma}e(g)-\phi dA_\mu.$$
If the integral on the right is infinite then the result of our lemma is obvious, and if it is finite then we can rearrange to get the desired inequality.
\end{proof}
\begin{lem}\label{close}
In the setting above, work with a metric $\mu_0$. Then for any other $\mu$, $f_\mu$ satisfies the hypothesis above. 
\end{lem}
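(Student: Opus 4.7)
The plan is to reduce to Theorem \ref{S}, which gives the $C^\infty$ asymptotics of $f_\mu$ in $\mu$-cusp coordinates, and to transfer these to $\mu_0$-cusp coordinates via the change-of-coordinates diffeomorphism. First, $f_\mu$ is smooth and therefore locally Lipschitz, so only the convergence-at-infinity condition of Definition \ref{convalpha} with source metric $\mu_0$ requires verification.

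Let $\Phi_\mu$ and $\Phi_{\mu_0}$ denote the $\mu$- and $\mu_0$-cusp charts on a neighborhood of the puncture, and let $\tilde{\Psi} := \tilde{\Phi}_\mu \circ \tilde{\Phi}_{\mu_0}^{-1}$ be the lift to universal covers. Then $\tilde{\Psi}$ is a quasiconformal self-map of the upper half-plane fixing $\infty$ and satisfying $\tilde{\Psi}(z_0 + \tau_{\mu_0}) = \tilde{\Psi}(z_0) + \tau_\mu$. By Theorem \ref{S}, $(f_\mu \circ \tilde{\Phi}_\mu^{-1})(x + iy) \to \alpha_\mu^*(x)$ in $C^\infty$, uniformly on horizontal strips, as $y \to \infty$, for some constant-speed parametrization $\alpha_\mu^*$ of the geodesic axis $\beta$ on $[0, \tau_\mu]$.

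The key technical assertion is that
\[
\tilde{\Psi}(z_0) = \lambda z_0 + c + o(1) \quad \text{as } \Ima(z_0) \to \infty,
\]
with $\lambda = \tau_\mu/\tau_{\mu_0}$ and $c \in \mathbb{C}$, uniformly in $\Real(z_0)$. This is ultimately a consequence of the exponential decay at the cusp of integrable holomorphic quadratic differentials (cf.\ (\ref{ivreallylosttrack})): the Beltrami coefficient of $\tilde{\Psi}$, which measures the discrepancy between the complex structures induced by $\mu$ and $\mu_0$, can be written in terms of such a differential and hence vanishes exponentially as $\Ima(z_0) \to \infty$. Thus $\tilde{\Psi}$ is asymptotically conformal at $\infty$, and the only orientation-preserving conformal self-maps of $\mathbb{H}$ fixing $\infty$ and intertwining the prescribed translations are the affine maps $z_0 \mapsto \lambda z_0 + c$.

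Granting the asymptotic formula, uniform $C^0$ convergence on $[0, \tau_{\mu_0}] \times [0, 1]$ gives
\[
f_\mu \circ i_r^{\mu_0}(x_0, y_0) = (f_\mu \circ \tilde{\Phi}_\mu^{-1})(\tilde{\Psi}(x_0 + i(y_0 + r))) \longrightarrow \alpha_\mu^*(\lambda x_0 + \Real(c))
\]
as $r \to \infty$, since $\Ima(\tilde{\Psi}(x_0 + i(y_0+r))) \to \infty$ uniformly while $\Real(\tilde{\Psi}(x_0 + i(y_0+r))) - \lambda x_0 - \Real(c) \to 0$ uniformly. Setting $\alpha'(x_0) := \alpha_\mu^*(\lambda x_0 + \Real(c))$, one verifies $\alpha'(\tau_{\mu_0}) = \alpha_\mu^*(\tau_\mu + \Real(c)) = \rho(\zeta)\alpha'(0)$, so $\alpha'$ is constant-speed on $[0, \tau_{\mu_0}]$ and hence a translate of the $\mu_0$-reference parametrization at the cusp. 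Thus $f_\mu$ converges at $\infty$ in the sense of Definition \ref{convalpha}, and the hypothesis of Proposition \ref{min} is satisfied. The principal obstacle is establishing the asymptotic affine form of $\tilde{\Psi}$, which requires careful tracking of the decay of the relevant Beltrami coefficient at the cusp.
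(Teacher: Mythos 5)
Your strategy is essentially the paper's: transport the known cusp asymptotics of $f_\mu$ in $\mu$-conformal coordinates to $\mu_0$-conformal coordinates through the quasiconformal change of cusp coordinates, whose complex dilatation decays in the cusp because admissible variations of hyperbolic metrics are governed by integrable holomorphic quadratic differentials (Section 3.2, cf.\ (\ref{ivreallylosttrack})). The difference is how much you demand of the coordinate change: you aim for the full asymptotic $\tilde{\Psi}(z_0)=\lambda z_0+c+o(1)$ with $\lambda=\tau_\mu/\tau_{\mu_0}$, whereas the paper contents itself with Mori's theorem (uniform H{\"o}lder continuity, dilatation tending to $1$) together with the observation that each $\mu_0$-cylinder of height $1$ embeds in a $\mu$-cylinder of bounded height, and then lets the convergence of $f_\mu$ to the geodesic projection finish the job.

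The gap is at your key technical assertion. From exponential decay of the Beltrami coefficient you conclude that $\tilde{\Psi}$ is ``asymptotically conformal at $\infty$'' and then appeal to the classification of conformal self-maps of $\mathbb{H}$ fixing $\infty$ to obtain the affine asymptotic. That inference is not valid as written: $\tilde{\Psi}$ is not conformal, and a quasiconformal map whose dilatation merely tends to $0$ at a boundary point need not be asymptotic to any affine map there (with slow decay the map can spiral, and the constant $c$ need not exist). What rescues the claim is the rate: exponential decay in $\Ima(z_0)$ becomes polynomial decay in the punctured-disk coordinate $w=e^{2\pi i z_0/\tau_{\mu_0}}$, so the dilatation $\nu$ satisfies the integrability condition $\iint |\nu(w)|\,|w|^{-2}\,du\,dv<\infty$, and the Teichm{\"u}ller--Wittich--Belinskii/Lehto theorem on conformality at a point gives $F(w)=aw(1+o(1))$, which unwinds (via a continuous logarithm) to exactly your uniform affine asymptotic. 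You flag this as ``the principal obstacle,'' but then assert it rather than prove it, so the argument has a hole precisely at its crux; either invoke the theorem above or fall back on the paper's softer route via Mori's theorem. Two secondary remarks: the identification of the Beltrami coefficient of $\tilde{\Psi}$ with a quantity controlled by an integrable quadratic differential is an infinitesimal statement, so for a general $\mu$ you must integrate the estimate along a path of hyperbolic metrics or, as the paper implicitly does, restrict to metrics close in Teichm{\"u}ller space (which is all the application in Proposition \ref{derivative} requires); once the affine asymptotic is in place, the remainder of your argument (the speed $\ell/\tau_{\mu_0}$ of the limiting parametrization and the periodicity check) is correct and in fact spells out more detail than the paper's own sketch.
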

\begin{proof}
Let $w$ be a complex coordinate parametrizing a cusp as a quotient of a vertical strip of length $1$. Recall $z$ is defined on a strip $y\geq a$, $0\leq x \leq \tau_\mu$, and similar for $z_\mu$ (we take the same $a$). We can assume $z=f^{\lambda_0}(w)$, $z_{\mu}=f^{\lambda}(w)$, where $f^{\lambda_0}$, $f^\lambda$ are smooth quasiconformal maps with complex dilatations $\lambda_0,\lambda$ respectively. From the choice of coordinates, $f^{\lambda_0}$ maps $(0,1,\infty)\mapsto (0,\tau_{\mu_0},\infty)$ and $f^{\lambda}$ maps $(0,1,\infty)\mapsto (0,\tau_\mu,\infty)$. $f^\lambda \circ (f^{\lambda_0})^{-1}$ takes $(x,y)\mapsto (x_\mu,y_\mu)$, yielding a quasiconformal mapping between the strips. Provided the metrics are close enough in Teichm{\"u}ller space, $\lambda$ and $\lambda_0$ are related by a small variation $$\lambda = \lambda_0 + \dot{\lambda}.$$ Note that from the computation in Section 3.2, $\dot{\lambda}\to 0$ as $y\to \infty$. It follows that $f^{\lambda}\circ (f^{\lambda_0})^{-1}$ has complex dilatation tending to $0$ as $y\to\infty$. Mori's theorem  \cite[Chapter 3]{Ah} implies $f^{\lambda}\circ (f^{\lambda_0})^{-1}$ has uniform H{\"o}lder continuity.

Upon pre-composing with cylinders that are conformal for $\mu$, the harmonic map $f_\mu$ converges to a projection onto the geodesic axis $\beta$. We can take these $\mu$-cylinders as large as we like. Using the H{\"o}lder continuity and our normalizations, given a family $\mu_0$ cylinders of height $1$, we can embed each cylinder in a $\mu$-cylinder of fixed height. $C^0$ convergence to the projection onto the geodesic thus follows.
\end{proof}
\end{subsection}

\begin{subsection}{Proof of Proposition \ref{derivative}}
For the remainder of this section, fix a background metric $\mu_0$. There is a Serre duality pairing between holomorphic quadratic differentials on $\Sigma$ with at most first order poles at the cusp and harmonic Beltrami forms with appropriate decay (see Remark \ref{Bers}): $$\langle \Phi,\psi\rangle=\int_\Sigma \phi \psi dz d\overline{z},$$ where $\Phi=\phi(z)dz^2$ is a coordinate expression. We recall that Proposition \ref{derivativefin} asserts that the derivative of the total energy of a finite energy harmonic map, evaluated on a harmonic Beltrami form $\psi$, is $$dE_\rho[\mu](\psi) = -4\Real \langle \Phi(f_\mu),\psi\rangle.$$ For finite energy harmonic maps, Proposition \ref{derivative} is a direct corollary of Proposition \ref{derivativefin}, which we prove first. For closed surfaces, Proposition \ref{derivativefin} is well known, although the history is unclear---the earliest computation may go back to the work of Douglas \cite{Do}. The most modern version is contained in \cite{We}. The difference without compactness is that we must control variations at the cusp.
\begin{proof}[Proof of Proposition \ref{derivativefin}]
Let us assume $\mu(z)$ is conformal, and let $$\mu_t = \mu + t\dot{\mu} + t\epsilon(t)$$ be a variation through hyperbolic metrics, where $\epsilon(t)\to 0$ as $t\to 0$. We denote by $\phi_t$ the associated holomorphic quadratic differential, which is given in local coordinates by $$\phi_t(z) = t\Big( (\dot{\mu}_{11}+\dot{\epsilon}_{11}(t)) - i(\dot{\mu}_{12}+\dot{\epsilon}_{12}(t))\Big ) dz^2,$$ and we set $\psi_t$ to be the associated harmonic Beltrami form. We put $\phi$ to be the quadratic differential for the variation $\mu+\dot{\mu}$, and $\psi$ the harmonic Beltrami form. Writing $f=f_\mu$, $f_t=f_{\mu_t}$, our objective is to show that $$\frac{d}{dt}|_{t=0} \int_{\Sigma} e(\mu_t,f_t) dA_{\mu_t} = -4\Real\langle \Phi(f),\psi\rangle.$$ By energy minimization, we have the inequalities $$\int_\Sigma e(\mu_t,f_t) dA_{\mu_t} - \int_\Sigma e(\mu,f) dA_{\mu} \leq \int_\Sigma e(\mu_t,f) dA_{\mu_t} - \int_\Sigma e(\mu,f) dA_{\mu}$$ and $$\int_\Sigma e(\mu_t,f_t) dA_{\mu_t} - \int_\Sigma e(\mu,f) dA_{\mu} \geq \int_\Sigma e(\mu_t,f_t) dA_{\mu_t} - \int_\Sigma e(\mu,f_t) dA_{\mu}.$$
Thus, it suffices to divide by $t$ and take the limit on the two expressions on the right. We expand
\begin{equation}\label{idknow}
    \frac{\sqrt{|\mu|_t}\mu_t^{\alpha\beta} - \sqrt{|\mu|}\mu^{\alpha\beta}}{t}= \Big( \frac{\sqrt{|\mu|-|\phi_t|^2}-\sqrt{|\mu|}}{t}\Big ) \mu^{\alpha\beta}+ \sqrt{|\mu|-|\phi_t|^2}(\dot{\mu}^{\alpha\beta}+\epsilon^{\alpha\beta}(t)).
\end{equation}
 Twice the first integrand is obtained by hitting the expression above with $e_{\alpha\beta}(f)$, and the second with $e_{\alpha\beta}(f_t)$, which both converge to $e_{\alpha\beta}(f)$ pointwise as $t\to 0$. The first term in (\ref{idknow}) converges to $0$, while the second one to $\sqrt{|\mu|}\dot{\mu}^{\alpha\beta}$. Using the relation $\dot{\mu}^{\alpha\beta}=-\mu^{-2}\dot{\mu}_{\alpha\beta}$, the integrand converges pointwise to 
$$-\frac{1}{2\mu} (\dot{\mu}_{11} e_{11} + \dot{\mu}_{22}e_{22} + 2\dot{\mu}_{12}e_{12})= -\frac{1}{2\mu}(\dot{\mu}_{11}(e_{11}-e_{22}) + 2\dot{\mu}_{12}e_{12}) = -4\Real \Phi(f)\psi.$$
Therefore, it suffices to show the integrands are always bounded above by an integrable quantity, for then we can justify an application of the dominated convergence theorem. We have the expression $$\frac{\sqrt{|\mu|-|\phi_t|^2}-\sqrt{|\mu|}}{t} = \frac{-|\phi_t|^2}{t(\sqrt{|\mu|-|\phi_t|^2}+\sqrt{|\mu|})} = \frac{-t\Big| (\dot{\mu}_{11}+\dot{\epsilon}_{11}(t)) - i(\dot{\mu}_{12}+\dot{\epsilon}_{12}(t))\Big |^2}{\sqrt{|\mu|-|\phi_t|^2}+\sqrt{|\mu|}}.$$ If $a_t$ is the $-1$ Laurent coefficient in the coordinates on the punctured disk (Section 3.2) for the quadratic differential associated to $\dot{\mu}+\epsilon(t)$, then because $\epsilon(t)\to 0$, $a_t$ converges to a constant $a_0$. Therefore, for $t$ small enough, using the expression (\ref{ivreallylosttrack}), we see that for small $t$, the first term in (\ref{idknow}) decays at most like $$-t(2|a_0|+1)^2y^2e^{-2y}y^2\sim y^4e^{-2y}.$$ By similar reasoning, the second term in (\ref{idknow}) decays like $y^2e^{-2y}$. Thus, it suffices to bound $e_{\alpha\beta}(f_t)$ by a constant. The one obstruction to applying Lemma \ref{parun} for $f_t$ is that $e_{\alpha\beta}$ depends on the cusp coordinates for $\mu$ rather than $\mu_t$. The argument of Lemma \ref{close} shows that the $\mu$ and $\mu_0$ coordinates are related by a mapping that is asymptotically Lipschitz---the H{\"o}lder exponent in Mori's theorem is the reciprocal of the quasiconformal dilatation, which is tending to $1$. Thus, once high enough in the cusp, we have the same bound in the $\mu_0$-coordinates. From the discussion above, the result follows.
\end{proof}
\begin{remark}\label{npc}
We have worked in negative curvature, but the proof carries through in non-positive curvature if total energies of harmonic maps to $(X,\nu)$ are unique, and if one can locally continuously associate source metrics to harmonic maps in the $C^\infty$ topology. Our proof uses the existence of a single finite energy harmonic map and an energy minimizing property. In non-positive curvature we have energy minimization, and for existence one can modify our \cite[Proposition 3.8]{S} to include NPC manifolds.
\end{remark}

Now we turn to the main result of this section. Assume the monodromy is hyperbolic. 
\begin{proof}[Proof of Proposition \ref{derivative}]
As above, let $\mu_t=\mu+ t\dot{\mu}+ t\epsilon(t)$ be the variation with $\mu$. Similarly, denote by $\phi_t$ and $\psi$ the family of holomorphic quadratic differentials and the harmonic Beltrami form associated to this path respectively. Put $f=f_\mu$, $f_t=f_{\mu_t}$, $h=h_\mu$, $h_t=h_{\mu_t}$, recalling that $f$ and $h$ correspond to the representations $\rho_1:\Gamma\to\PSL(2,\mathbb{R})$, $\rho_2:\Gamma\to G$ respectively. From the energy minimization Lemma \ref{min}, we deduce
$$\int_\Sigma e(\mu,h)-e(\mu, f_t) dA_\mu \leq \E(\mu) \leq \int_\Sigma e(\mu,h_t)-e(\mu, f) dA_\mu$$ and $$\int_\Sigma e(\mu_t,h_t)-e(\mu_t, f) dA_{\mu_t} \leq \E(\mu_t) \leq \int_\Sigma e(\mu_t,h)-e(\mu_t, f_t) dA_{\mu_t}.$$ We cannot apply Lemma \ref{asym} to an expression like $e(\mu,f_t)$ directly, but from the expression for the cusp coordinates (\ref{17}), it follows that when changing $\sqrt{|\mu|}e(\mu,f)$ to $\sqrt{|\mu_t|}e(\mu_t,f_\mu)$, the energy density is asymptotically multiplied by $\tau_{\mu_t}/\tau_{\mu}$. Therefore, Lemma \ref{asym} does imply that every integral above is finite. Furthermore, it makes sense to manipulate these integrals, and so the difference $\E(\mu_t)-\E(\mu)$ is bounded above by $$\int_\Sigma (\sqrt{|\mu_t|}e(\mu_t,h)-\sqrt{|\mu|}e(\mu,h) ) - (\sqrt{|\mu_t|}e(\mu_t,f_t)-\sqrt{|\mu|}e(\mu,f_t))dzd\overline{z}$$ and bounded below by $$\int_\Sigma (\sqrt{|\mu_t|}e(\mu_t,h_t) - \sqrt{|\mu|}e(\mu,h_t)) - (\sqrt{|\mu_t|}e(\mu_t,f) - \sqrt{|\mu|}e(\mu,f))dzd\overline{z}.$$
Dividing by $t$, the local computations from the proof of Proposition \ref{derivativefin} work out almost the same. We write out the details for the upper bound and leave the lower bound to the reader. The relevant quotient may be expressed
$$\frac{1}{2}\Big ( \Big (\frac{\sqrt{|\mu|-|\phi_t|^2}-\sqrt{|\mu|}}{t}\Big )\mu^{\alpha\beta} + \sqrt{|\mu|-|\phi_t|^2}(\dot{\mu}^{\alpha\beta}+\epsilon^{\alpha\beta}(t))\Big )(e_{\alpha\beta}(h)-e_{\alpha\beta}(f_t)).$$
The quotient converges pointwise to $-4\Real(\Phi(h)-\Phi(f))\cdot \overline{\psi}$, so we are left to bound it by an integrable quantity that does not depend on $t$. Lemma \ref{hypun} and the fact that the change of coordinates is asymptotically conformal shows that the second term in the product is uniformly integrable. And it was shown in the proof of the previous proposition that the first term in the product decays exponentially.
\end{proof}
\end{subsection}
\begin{subsection}{Different twist parameters}\label{3.9}
The proof of Proposition \ref{derivative} is complete for finite energy harmonic maps, and infinite energy harmonic maps with $\theta=0$. We sketch the necessary adjustments for the remaining harmonic maps $f_\mu^\theta$. Fix $\theta\in\mathbb{R}$. To construct the harmonic map $f_\mu^\theta$, we define the fractional Dehn twist to be the map in the cusp coordinates defined by $$x+iy\mapsto x+\theta_iy+iy.$$ We then postcompose with the approximation maps $f_r$ and take limits on these maps as in Section 3.1 (see \cite[Section 5.1]{S}).  
\begin{itemize}
    \item The inequality in Lemma \ref{asym} becomes $$|e(\mu,f_\mu^\theta)-\Lambda(\theta)\ell(\rho(\gamma))^2/2\tau^2|<Ce^{-cy}.$$ The proof is the same, except the Hopf differential has expression at the cusp according to Theorem \ref{S}.
    \item For continuity of harmonic maps on compacta, the main step is to find bounds on the total energy of $\varphi_\mu^\theta$ on compacta. This is a consequence of the chain rule, since in the coordinates (\ref{17}), the derivative matrix of the fractional Dehn twist is simply 
    \begin{equation}\label{fdehn}
        \begin{pmatrix} 1 & \theta \\ 0 & 1 \end{pmatrix}.
    \end{equation}
    \item For the energy minimizing property, we can say that a map converges at $\infty$ to $\alpha^\theta$ if after pulling back to conformal cylinders approaching the cusps, the harmonic maps converge to the composition of a projection onto the geodesic with a fractional Dehn twist on the cylinder. We then have the analogue of Lemma \ref{min}, which we can show the harmonic maps $f^\theta$ satisfy.
\end{itemize}
 We leave the rest of the details to the reader.
\end{subsection}
\end{section}
\begin{section}{Existence and uniqueness of maximal surfaces}
With Proposition \ref{derivative} in hand, we prove the main theorems. For convenience we assume the twist parameter is zero---the proof has no dependence on it. The existence result is immediate from the next proposition.
\begin{prop}\label{proper}
The functional $\E=\E_{\rho_1,\rho_2}:\mathcal{T}(\Gamma)\to \mathbb{R}$ is proper if and only if $\rho_1$ almost strictly dominates $\rho_2$.
\end{prop}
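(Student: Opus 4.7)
The biconditional splits into an easy reverse implication and a substantive forward one. For the reverse direction, if $\E$ is proper it attains its infimum at some $\mu_0\in\mathcal{T}(\Gamma)$. By Proposition \ref{derivative}, $\mu_0$ is a critical point with $\Phi(h_{\mu_0}^\theta)=\Phi(f_{\mu_0}^\theta)$, so $(h_{\mu_0}^\theta,f_{\mu_0}^\theta):(\tilde\Sigma,\tilde\mu_0)\to(\mathbb{H}\times X,\sigma\oplus(-\nu))$ is conformal and harmonic, hence a maximal immersion. By \cite[Proposition 3.13]{S} it is spacelike, and by Theorem \ref{S} it is tame because the Hopf differentials have poles of order at most two. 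The ``easy direction'' of Theorem A (the lemma just before Proposition \ref{welldef}) then yields almost strict domination.

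For the forward direction I would exploit the optimal $1$-Lipschitz map $g:\tilde{C}(\Sigma,\sigma)\to X$ supplied by almost strict domination. Let $h_\mu^\theta:\tilde\Sigma\to\mathbb{H}$ be the Fuchsian harmonic diffeomorphism onto $\tilde{C}(\Sigma,\sigma)$. Then $g\circ h_\mu^\theta$ is $\rho_2$-equivariant and locally Lipschitz, and because $g$ is isometric on every boundary axis of $\rho_1(\zeta_i)$, it converges at infinity to the same twisted projection model as $f_\mu^\theta$. Proposition \ref{min} (extended to arbitrary twist as in Section \ref{3.9}) then gives $\int_\Sigma e(\mu,f_\mu^\theta)\,dA_\mu\le\int_\Sigma e(\mu,g\circ h_\mu^\theta)\,dA_\mu$, and hence the pointwise non-negative lower bound
$$\E(\mu)\;\ge\;\int_\Sigma\bigl(e(\mu,h_\mu^\theta)-e(\mu,g\circ h_\mu^\theta)\bigr)\,dA_\mu\;\ge\;0.$$

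To upgrade this to properness, let $\mu_n$ leave every compact subset of $\mathcal{T}(\Gamma)$. Mumford compactness produces an essential non-peripheral simple closed curve $\alpha$ with $\ell_{\mu_n}(\alpha)\to 0$, and the Collar Lemma furnishes embedded annular neighborhoods $\mathcal{A}_n$ of $\alpha$ whose $\mu_n$-through-length $\ell_n$ diverges. Non-negativity of the integrand reduces matters to bounding the integral over $\mathcal{A}_n$ from below. A pinching-curve analogue of Proposition \ref{asym} shows $h_{\mu_n}^\theta$ is close to the projection onto the axis of $\rho_1(\alpha)$ throughout the deep part of $\mathcal{A}_n$, with energy density $\approx \ell(\rho_1(\alpha))^2/(2\tau^2)$ in flat-cylinder coordinates and total energy in $\mathcal{A}_n$ growing like $\tfrac12\ell_n\ell(\rho_1(\alpha))^2$. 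Moreover the image $h_{\mu_n}^\theta(\mathcal{A}_n)$ eventually lies in a fixed compact subset $K$ of the interior of the convex core (a small tubular neighborhood of the closed geodesic representative of $\alpha$), and on $K$ the upper semi-continuous function $x\mapsto\textrm{Lip}_x(g)$ has supremum $\lambda_0<1$. The pointwise inequality $(g\circ h_{\mu_n}^\theta)^*\nu\le\lambda_0^2\,(h_{\mu_n}^\theta)^*\sigma$ therefore gives
$$\int_{\mathcal{A}_n}\bigl(e(\mu_n,h_{\mu_n}^\theta)-e(\mu_n,g\circ h_{\mu_n}^\theta)\bigr)\,dA_{\mu_n}\;\ge\;(1-\lambda_0^2)\int_{\mathcal{A}_n}e(\mu_n,h_{\mu_n}^\theta)\,dA_{\mu_n}\;\longrightarrow\;\infty.$$

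The main obstacle is the uniform control on $h_{\mu_n}^\theta|_{\mathcal{A}_n}$. I expect the cusp-type energy asymptotic in the pinching collar to follow from a direct adaptation of the approximation-map construction of Section 3: in the geometric limit the two sides of $\mathcal{A}_n$ degenerate into cusps whose monodromies are $\rho_1(\alpha)^{\pm 1}$, so the projection onto the axis remains the correct model for $h_{\mu_n}^\theta$ on compacta exactly as in Proposition \ref{asym}. Concentration of the image $h_{\mu_n}^\theta(\mathcal{A}_n)$ near the geodesic representative of $\alpha$ should follow from subharmonicity of the distance function to the axis of $\rho_1(\alpha)$ combined with the convergence of boundary values on $\partial\mathcal{A}_n$. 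Twist-parameter bookkeeping in the general case, simultaneous pinching of several curves, and the standard rescaling along a common axis when $\rho_2$ is reducible, are routine.
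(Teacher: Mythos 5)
Your reverse implication (proper $\Rightarrow$ minimizer $\Rightarrow$ critical point $\Rightarrow$ tame spacelike maximal surface $\Rightarrow$ almost strict domination via the easy lemma of Section \ref{2.5}) is exactly the paper's, and your starting point for the forward implication --- replacing $f_\mu$ by $g\circ h_\mu$ via Proposition \ref{min} to obtain a non-negative integrand --- is also the paper's key device. But the forward direction has a genuine gap: you only rule out divergence by pinching. A sequence $(\mu_n)$ can leave every compact subset of $\mathcal{T}(\Gamma)$ while its systole stays bounded below; Mumford compactness then only says that $\psi_n^*\mu_n$ converges for some mapping classes $\psi_n$, and properness requires showing $\E(\mu_n)\to\infty$ when the $\psi_n$ are not eventually constant (unbounded twisting/remarking). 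This second regime is where the bulk of the paper's proof lives: one shows that if some $\ell(\psi_n^*\rho_1(\gamma))\to\infty$, then the image under $h_{\mu_n}\circ\psi_n$ of a core circle of a fixed-width collar must have length tending to $\infty$ outside the collar $C_{d/2}(\xi)$ of the convex-core boundary (Lemma \ref{bigcurve}, proved by a surgery argument replacing excursions into $C_{d/2}(\xi)$ by boundary arcs without changing the homotopy class), which forces $\E(\mu_n)\to\infty$; eventual constancy of $[\psi_n]$ (Lemma \ref{evenconstant}) and proper discontinuity of the mapping class group action then give convergence in $\mathcal{T}(\Gamma)$. Your proposal never addresses this case, so it does not prove properness.

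A secondary point: even in the pinching regime your estimate is heavier than needed and rests on an unproved step. The short curves $\gamma_n$ may vary with $n$, and your ``pinching-curve analogue of Proposition \ref{asym}'' --- concentration of $h_{\mu_n}$ near the axis of $\rho_1(\alpha)$ uniformly along the whole long collar --- is only asserted (``I expect\dots''); uniformity across a degenerating family is precisely the kind of statement that requires work. The paper avoids it entirely with a soft topological bound (Lemma \ref{nocollar}): a core circle of the collar maps to a closed curve in a non-peripheral homotopy class, so it must spend length at least $\min\{d/2,\kappa\}$ (with $\kappa$ the systole of $\rho_1$) outside $C_{d/2}(\xi)$, where $\mathrm{Lip}_x(g)\leq \mathrm{Lip}(g,d/2)<1$; a Cauchy--Schwarz estimate across the collar then yields the lower bound $\tfrac{w_n}{2}(1-\mathrm{Lip}(g,d/2)^2)k^2\to\infty$ with no asymptotic analysis of $h_{\mu_n}$ at all. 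If you want to salvage your route, you would need to prove the degenerating-collar asymptotics and, separately, supply the bounded-systole case; as written, the argument establishes only that $\E$ blows up along pinching sequences, which is strictly weaker than properness.
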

Indeed, by Proposition \ref{derivative}, properness implies the existence of a maximal spacelike immersion. Thus, the proof of the existence result reduces to showing that almost strict domination implies properness.

\begin{subsection}{Mapping class groups}
Preparing for the proof of Theorem A, we review some Teichm{\"u}ller theory. Set $\textrm{Diff}^+(\Sigma)$ to be the group of $C^\infty$ orientation preserving diffeomorphisms of $\Sigma$, equipped with the $C^\infty$ topology. Denote by $\textrm{Diff}_0^+(\Sigma)$ the normal subgroup consisting of maps that are isotopic to the identity. The mapping class group of $\Sigma$ is defined $$\textrm{MCG}(\Sigma):=\pi_0(\textrm{Diff}^+(S))=\textrm{Diff}^+(S)/\textrm{Diff}_0^+(S).$$
Given a collection of boundary lengths $\ell=(\ell_1,\dots,\ell_n)\in\mathbb{R}^n$, we can consider the Teichm{\"u}ller space $\mathcal{T}_\ell(\Gamma)$ of surfaces with punctures and geodesic boundary with lengths determined by $(\ell_1,\dots,\ell_n)$. This is not a relative representation space but a union of them. The mapping class group acts on Teichm{\"u}ller space by pulling back classes of hyperbolic metrics: 
\begin{equation}\label{20}
    [\varphi]\cdot [\mu]\mapsto [\varphi^*\mu].
\end{equation}
The action of the mapping class group is properly discontinuous and the quotient is the moduli space of hyperbolic surfaces.

There is a wealth of metrics on Teichm{\"u}ller space, and the mapping class group acts by isometries on a number of them. In the work below, we set $d_\mathcal{T}$ to be any metric distance function on Teichm{\"u}ller space on which the mapping class group acts by isometries. One example is the Teichm{\"u}ller distance.

Selecting a basepoint $z\in \Sigma$, there is an action of the mapping class group on $\pi_1(\Sigma,z)$, which identifies with $\Gamma$. Given $\varphi\in \textrm{Diff}^+(\Sigma)$ such that $\varphi(z)=z$, the induced map $\varphi_*:\pi_1(\Sigma,z)\to\pi_1(\Sigma,z)$ is an automorphism. While a general $\varphi$ may not fix $z$, one can find a different $\varphi_1\in \textrm{Diff}^+(\Sigma)$ that does fix $z$ and is isotopic to $\varphi$. The isotopy gives rise to a path $\gamma_1$ from $\varphi(z)$ to $z$. If $\varphi_2\in \textrm{Diff}^+(\Sigma)$ also fixes $z$ and is isotopic to $\varphi$, then we get another path $\gamma_2$ from $\varphi(z)$ to $z$. The automorphisms $(\varphi_1)_*$ and $(\varphi_2)_*$ of $\pi_1(\Sigma,z)$ differ by the inner automorphism corresponding to the conjugation by the class of $\gamma_1\cdot \overline{\gamma}_2$. This association furnishes an injective homomorphism from $$\textrm{MCG}(\Sigma)\to \textrm{Out}(\pi_1(\Sigma))= \textrm{Aut}(\pi_1(\Sigma))/\textrm{Inn}(\pi_1(\Sigma)).$$ 
Through this injective mapping, the mapping class group acts on representation spaces: we can precompose a representative of a representation with a representative of an element in $\textrm{Out}(\pi_1(\Sigma))$, and then take the corresponding equivalence class. On a Teichm{\"u}ller space, this agrees with the action (\ref{20}). For this action, we use the similar notation $$[\varphi]\cdot [\rho]\mapsto [\varphi^*\rho]=[\rho\circ \varphi_*].$$ Note that it does not in general preserve relative representation spaces: a mapping class may permute the punctures. Finally, we record that $\textrm{MCG}(\Sigma)$ acts equivariantly with respect to 
\begin{itemize}
    \item length spectrum: for a representation $\rho$ and $[\varphi]\in \textrm{MCG}(\Sigma)$, $\ell(\varphi^*\rho(\gamma))=\ell(\rho(\varphi_*(\gamma)))$. If $\mu$ is a hyperbolic metric we set $\ell_\mu(\gamma)$ to be the $\mu$-length of the geodesic representative of $\gamma$ in $(\Sigma,\mu)$. A restatement of the above is $\ell_{\varphi^*\mu}(\gamma)=\ell_\mu(\varphi(\gamma))$.
    \item Energy of harmonic maps: if $f: (\tilde{\Sigma},\tilde{\mu})\to (X,\nu)$ is a $\rho$-equivariant map, then 
    \begin{equation}\label{25}
      e(\mu,f_\mu)=e(\varphi^*\mu,f_\mu\circ \varphi).  
    \end{equation}
 Furthermore, if $f$ is harmonic, then $f\circ \varphi: (\tilde{\Sigma},\tilde{\varphi}^*\tilde{\mu})\to (X,\nu)$ is a $\varphi^*\rho$-equivariant harmonic map.  
\end{itemize}
\end{subsection}

\begin{subsection}{Existence of maximal surfaces} Suppose a simple closed curve $\xi$ is either a geodesic boundary component or a horocycle in a hyperbolic surface. If $\xi$ is a boundary, let $C_d(\xi)$ denote the collar around $\xi$ consisting of points with distance to $\xi$ at most $d$. If it is a horocycle, put $C_d(\xi)$ to be the union of the $d$-collar and the enclosed cusp (both are defined for suitably small $d$).

We now begin the proof of properness. Suppose $\rho_1$ almost strictly dominates $\rho_2$. We assume there is a single peripheral curve---there are no substantial changes in the case of many cusps---and we set $\xi$ to be either the geodesic boundary component of the convex core of $\mathbb{H}/\rho_1(\Gamma)$, or a deep horocycle of $\mathbb{H}/\rho_1(\Gamma)$, depending on whether the monodromy is hyperbolic or parabolic. Let $g$ be an optimal $(\rho_1,\rho_2)$-equivariant map. If the image of the peripheral curve under $\rho_1$ and $\rho_2$ is hyperbolic, then $g$ has constant speed on the boundary components. Note that $g\circ h_\mu$ is $\rho_2$-equivariant, and in the hyperbolic case, converges at $\infty$ to a projection onto the geodesic at infinity in the sense of Definition \ref{convalpha}. Thus, from Lemma \ref{min} we have $$\E(\mu) = \int_\Sigma e(\mu,h_\mu) - e(\mu, f_\mu) dA_\mu\geq \int_\Sigma e(\mu,h_\mu) - e(\mu, g\circ h_\mu) dA_\mu.$$ One of the main advantages of replacing $f_\mu$ with $g\circ h_\mu$ is that the integrand is now non-negative. Moving toward the proof of properness, let $([\mu_n])_{n=1}^\infty \subset \mathcal{T}(\Gamma)$ be such that $$\E(\mu_n) \leq K$$ for all $n$, for some large $K$. It suffices to show $[\mu_n]$ converges in $\mathcal{T}(\Gamma)$ along a subsequence. Structurally, our proof is similar to that of the classical existence result for minimal surfaces in hyperbolic manifolds (see \cite{SY2} and also the more modern paper \cite{GW}): we first prove 1) compactness in moduli space, and then 2) extend to compactness in Teichm{\"u}ller space.

Toward 1), we show there is a $\delta> 0$ such that for all non-peripheral simple closed curves in $\Gamma$, we have $\ell_{\mu_n}(\gamma)\geq \delta$. We argue by contradiction: suppose this is not the case, so that there is a sequence of non-peripheral simple closed curves $(\gamma_n)_{n=1}^\infty\subset \Gamma$ such that $$\ell_n:=\ell_{\mu_n}(\gamma_n)\to 0$$ as $n\to\infty$. By the regular collar lemma, we know that in $(\Sigma,\mu_n)$, the geodesic representing $\gamma_n$ is enclosed by a collar $C_n$ of width $w_n = \textrm{arcsinh}((\sinh \ell_n)^{-1})$. Up to a conjugation of the holonomy, $C_n$ is conformally equivalent to $\{x+iy\in\mathbb{C}: 0\leq x \leq w_n, 0\leq y\leq 1\}$ with the lines $\{y=0\}$ and $\{y=1\}$ identified.

Let $f_n=f_{\mu_n}$, $h_n=h_{\mu_n}$. Henceforward, conformally modify the metric $\mu_n$ to be flat in $C_n$. Our control over the energy functional depends on our knowledge of the local Lipschitz constant of the map $g$. This in turns relies on the image of the harmonic map $h_n$, in particular how far it takes $C_n$ into $C_d(\xi)$. For all $x,0<t<d$ in question, set $A_x^t=\{\theta\in \{x\}\times S^1: h_n(x,y)\not \in C_t(\xi)\}$. We also write $$\textrm{Lip}(g,t)=\max_{x\in C(\mathbb{H}/\rho_1(\Gamma))\backslash C_t(\xi)}\textrm{Lip}_x(g).$$ We have the inequalities
\begin{align*}
    \E(\mu_n)(\sigma_n) &\geq \int_{C_n} e(\mu_n,h_n) - e(\mu_n,g\circ h_n) dA_{\mu_n} \\
    &= \int_0^{w_n}\int_0^1 e(\mu_n, h_n) - e(\mu_n, g\circ h_n) dy ds \\
    &\geq w_n\min_{x} \int_{A_x^t}  e(\mu_n, h_n) - e(\mu_n, g\circ h_n) dy\\
    &\geq w_n\min_x(1-\textrm{Lip}(g,t)^2)\int_{A_x^t}  e(\mu_n,h_n) dy \\
    &\geq \frac{w_n}{2}\min_x(1-\textrm{Lip}(g,t)^2)\int_{A_x^t}  \Big |\frac{\partial h_n(x,y)}{\partial y}\Big |^2 dy \\ 
    &\geq \frac{w_n}{2}\min_x(1-\textrm{Lip}(g,t)^2)(\ell(\textrm{A}_x^t))^{-1}\Big (\int_{A_x^t}  \Big |\frac{\partial h_n(x,y)}{\partial y}\Big |dy \Big )^2  \\
    &\geq \frac{w_n}{2}\min_x(1-\textrm{Lip}(g,t)^2)\Big (\int_{A_x^t}  \Big |\frac{\partial h_n(x,y)}{\partial y}\Big | dy \Big )^2
\end{align*}    
Here $\ell(A_x^t)$ is the length of the (possibly broken) segment $A_{x,t}$, which is clearly bounded above by $1$. Fuchsian representations have discrete length spectrum, and hence there is a $\kappa>0$ such that $\ell(\rho_1(\gamma))\geq \kappa$ for all $\gamma\in \Gamma$. 
\begin{lem}\label{nocollar}
Set $t=d/2$. Then for any $x$, the inequality $$\int_{A_x^{d/2}}|\frac{\partial h_n(x,y)}{\partial y}\Big | dy \geq \min\{d/2,\kappa\}=: k$$ holds.
\end{lem}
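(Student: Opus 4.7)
The plan is a case split on whether $A_x^{d/2}$ is all of $S^1$ or a proper subset of it. Write $\eta:[0,1]\to \mathbb{H}$ for the lifted path $y\mapsto h_n(x,y)$; by $\rho_1$-equivariance, $\eta(1)=\rho_1(\gamma_n)\cdot\eta(0)$, and the quantity to bound below is the length of $\eta$ restricted to $A_x^{d/2}$. In the easy case $A_x^{d/2}=S^1$, this restricted length equals the full length of $\eta$, which is at least $d_\sigma(\eta(0),\rho_1(\gamma_n)\cdot\eta(0))\geq \ell(\rho_1(\gamma_n))\geq \kappa$ by the systole bound, so the lower bound $k$ is immediate.

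Now suppose $A_x^{d/2}\neq S^1$ and, toward a contradiction, that the length of $\eta$ on $A_x^{d/2}$ is strictly less than $d/2$. Each connected component $J$ of $A_x^{d/2}$ is then a sub-arc of $\eta$ of length less than $d/2$, whose endpoints lie on the preimage of $\partial C_{d/2}(\xi)$ and whose interior lies outside the preimage of $C_{d/2}(\xi)$. By the triangle inequality $\eta(J)$ is confined to the preimage of $C_d(\xi)\setminus \textrm{int}\, C_{d/2}(\xi)$, which is a disjoint union of connected ``shells,'' one per component of the preimage of $C_d(\xi)$---a horoball if $\xi$ is a horocycle, or a half-plane neighborhood of a lifted geodesic if $\xi$ is the geodesic boundary of the convex core. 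Connectedness of $\eta(J)$ forces both endpoints to lie on the same component of the preimage of $\partial C_{d/2}(\xi)$, i.e.\ on a common horocycle or equidistant curve.

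Because $\mathbb{H}$ is simply connected, $\eta|_J$ is then homotopic rel endpoints to the arc on that horocycle or equidistant curve joining the same two points; pushing the homotopy down to $\mathbb{H}/\rho_1(\Gamma)$ yields a rel-endpoint homotopy of the projection of $\eta|_J$ to an arc on $\partial C_{d/2}(\xi)$. Performing this replacement on every component $J$ produces a closed loop lying entirely inside $C_{d/2}(\xi)$ and freely homotopic to the projection of $\eta$. Since $\pi_1(C_{d/2}(\xi))=\mathbb{Z}$, with inclusion sending the generator to $\rho_1(\zeta)$, this new loop is freely homotopic to a power of the peripheral $\rho_1(\zeta)$. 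But the projection of $\eta$ is freely homotopic to $\rho_1(\gamma_n)$, and $\gamma_n$ is non-peripheral---contradiction. Hence the length on $A_x^{d/2}$ is at least $d/2$, and combining with the first case gives $\min\{d/2,\kappa\}=k$.

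I expect the main technical point to be the confinement step: using the strict bound $<d/2$ together with the generalized collar lemma to trap each $\eta(J)$ inside a single shell of the preimage of $C_d(\xi)$, so that its endpoints land on the same simply connected horocycle or equidistant curve. Once that is established, the remaining homotopy and $\pi_1$-computation are clean topological observations that turn the length defect into a peripherality contradiction for $\gamma_n$.
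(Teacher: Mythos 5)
Your argument is correct and rests on the same ingredients as the paper's proof of Lemma \ref{nocollar}: the systole bound $\ell(\rho_1(\gamma_n))\geq\kappa$ when the curve avoids $C_{d/2}(\xi)$ entirely, the confinement of the short outside arcs to the region between $\partial C_{d/2}(\xi)$ and $\partial C_d(\xi)$, and the fact that a loop trapped in the embedded collar is freely homotopic to a power of the peripheral, contradicting non-peripherality of $\gamma_n$. Where you diverge is the endgame of the second case: the paper stops as soon as confinement is established, observing that the \emph{entire} curve $h_n(\alpha)$ then lies in $C_d(\xi)$ (the portions inside $C_{d/2}(\xi)$ trivially, and each outside arc because it stays within distance $d/2$ of $\partial C_{d/2}(\xi)$), so the peripherality contradiction is immediate with no surgery. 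You instead replace each outside arc by a boundary arc of $C_{d/2}(\xi)$; this is essentially the technique the paper reserves for the harder Lemma \ref{bigcurve}, where translation lengths blow up and confinement alone does not suffice. Your route works, but two points deserve attention: (i) your own confinement step already finishes the proof the paper's way, so the surgery buys nothing here; and (ii) if you do perform it, $A_x^{d/2}$ may have infinitely many components, so ``performing this replacement on every component'' needs a word on why the result is still a continuous loop freely homotopic to the original --- this is fine because the outside components have total length less than $d/2$, hence the replacement arcs on the lifted equidistant curve or horocycle shrink with the components, but it is not automatic. A minor slip: for a geodesic $\xi$, the set $C_d(\xi)\setminus \mathrm{int}\,C_{d/2}(\xi)$ lifts to two bands per lift of $\xi$ (one on each side), not one connected shell per component of the preimage of $C_d(\xi)$; since these bands are still pairwise disjoint, your conclusion that both endpoints of $\eta|_J$ lie on a common equidistant curve or horocycle is unaffected.
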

\begin{proof}
Let $\alpha$ be any core curve of the form $\{x\}\times S^1$. If $h_n(\alpha)$ always remains outside $C_{d/2}(\xi)$, then $$\int_{A_x^t} |\frac{\partial h_n(x,y)}{\partial y}\Big | dy= \int_{0}^1 \Big |\frac{\partial h_n(x,y)}{\partial y}\Big | dy \geq \ell(\rho_1(\gamma_n))\geq \kappa.$$ Thus, we assume $h_n(\alpha)$ intersects $C_{d/2}(\xi)$. We parametrize $h_n(x,\cdot)$ by arc length, so that the integral in question measures the hyperbolic length on $\mathbb{H}/\rho_1(\Gamma)$ of the segment of $h_n(\alpha)$ that does not enter $C_{d/2}(\xi)$. If this length is ever less than $d/2$, then the curve $h_n(\alpha)$ is contained in $C_d(\xi)$. Phrased differently, it is a simple closed curve contained in an embedded cylinder. There is only one such homotopy class of curves, namely the homotopy class of the boundary geodesic. This situation is impossible, since $h_n(\alpha)$ is homotopic to a non-peripheral simple closed curve, and hence the length is at least $d/2$.
\end{proof}
Returning to the inequalities above, we now have $$\E(\mu_n)\geq \frac{w_n}{2}(1-\textrm{Lip}(g,d/2)^2)k^2.$$ We thus find $\E(\mu_n)\to\infty$ as $n\to \infty$, which violates the uniform upper bound.

The lower bound on the length spectrum shows the metrics $(\mu_n)_{n=1}^\infty$ satisfy Mumford's compactness criteria \cite{Mu} and hence project under the action of the mapping class group to a compact subset of the moduli space. 

Now we promote to compactness in Teichm{\"u}ller space. By compactness in moduli space, after passing to a subsequence of the $\mu_n$'s, there exists a sequence of mapping class group representatives $(\psi_n)_{n=1}^\infty$ such that $\psi_n^*\mu_n$ converges as $n\to\infty$ to a hyperbolic metric $\mu_\infty$. 

\begin{lem}\label{evenconstant}
Upon passing to a further subsequence, $([\psi_n])_{n=1}^\infty\subset \textrm{MCG}(\Sigma)$ is eventually constant.
\end{lem}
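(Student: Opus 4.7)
The plan is to argue by contradiction: suppose no subsequence of $([\psi_n])$ is eventually constant, so after passing to a subsequence we may assume the $[\psi_n]$ are pairwise distinct in $\textrm{MCG}(\Sigma)$. The goal is to force $\mathcal{E}(\mu_n)\to \infty$, contradicting $\mathcal{E}(\mu_n)\leq K$.

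First I invoke the $\textrm{MCG}(\Sigma)$-equivariance of energy via identity (\ref{25}), applied to both $h_\mu$ and $f_\mu$, to obtain
\[
\mathcal{E}_{\rho_1,\rho_2}(\mu_n) = \mathcal{E}_{\psi_n^*\rho_1,\psi_n^*\rho_2}(\psi_n^*\mu_n).
\]
Since $\psi_{n*}$ is an automorphism of $\Gamma$, the images $\psi_n^*\rho_k(\Gamma) = \rho_k(\Gamma)$ coincide as subgroups of the relevant isometry groups. Consequently the optimal map $g:\widetilde{C}(\Sigma,\mu_\mathbb{H})\to X$ from Definition \ref{asd} is simultaneously $(\psi_n^*\rho_1,\psi_n^*\rho_2)$-equivariant, with the same local Lipschitz data. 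Writing $h'_n$ for the $\psi_n^*\rho_1$-equivariant harmonic map with source $\psi_n^*\mu_n$, the energy-minimization Lemma \ref{min} applied to $(\psi_n^*\rho_1,\psi_n^*\rho_2)$ with competitor $g\circ h'_n$ yields
\[
\mathcal{E}_{\psi_n^*\rho_1,\psi_n^*\rho_2}(\psi_n^*\mu_n) \geq \int_\Sigma \bigl(e(\psi_n^*\mu_n,h'_n) - e(\psi_n^*\mu_n, g\circ h'_n)\bigr)\, dA.
\]
On any $\Gamma$-invariant compact $Q\subset \widetilde{C}(\Sigma,\mu_\mathbb{H})$ projecting to a compact subset of the interior of the convex core, $g|_Q$ has Lipschitz constant $\lambda_Q < 1$, so the integrand dominates $(1-\lambda_Q^2)\, e(\psi_n^*\mu_n,h'_n)$ on $(h'_n)^{-1}(Q)$.

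The main obstacle is to exhibit such a $Q$ (independent of $n$) with
\[
\int_{(h'_n)^{-1}(Q)/\Gamma} e(\psi_n^*\mu_n,h'_n)\, dA \longrightarrow \infty.
\]
The underlying principle is that since $\rho_1$ is Fuchsian, its $\textrm{MCG}$-orbit in $\mathcal{T}_{\mathbf{c}}(\Gamma)$ is discrete with finite stabilizer, so the pairwise distinctness of $[\psi_n]$ forces $\psi_n^*\rho_1\to \infty$ in $\mathcal{T}_{\mathbf{c}}(\Gamma)$, while $\psi_n^*\mu_n\to \mu_\infty$ remains bounded in $\mathcal{T}(\Gamma)$. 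The Dirichlet energy of a Fuchsian harmonic diffeomorphism with bounded source and diverging target must blow up over any fixed compact subset of the target's convex core; this is the analogue in our infinite-energy framework of Wolf's properness result \cite{W}. I would implement this concretely by fixing a pants decomposition of $\mu_\infty$ and selecting $Q$ to contain lifts of arcs transverse to a pants curve whose Fenchel-Nielsen twist between source $\psi_n^*\mu_n$ and target $\psi_n^*\rho_1$ grows without bound, then transferring the lower bound from the target region to the source integral using the uniform convergence estimates from Sections 3.3 and 3.4.

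Combining these ingredients, $\mathcal{E}(\mu_n) \geq (1-\lambda_Q^2) \int_{(h'_n)^{-1}(Q)/\Gamma} e(\psi_n^*\mu_n,h'_n)\, dA \to \infty$, contradicting $\mathcal{E}(\mu_n)\leq K$. Hence only finitely many distinct $[\psi_n]$ occur, and by pigeonhole a further subsequence is constant.
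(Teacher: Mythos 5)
Your overall strategy --- re-mark by $\psi_n$, observe that the optimal map $g$ stays $(\psi_n^*\rho_1,\psi_n^*\rho_2)$-equivariant, apply Lemma \ref{min} with competitor $g\circ h'_n$, and force the nonnegative integrand to blow up --- is the same as the paper's, and your contrapositive organization (proper discontinuity first, hence divergence of $\psi_n^*\rho_1$) is a harmless rearrangement of the paper's order, which first bounds $\ell(\psi_n^*\rho_1(\gamma))$ and only then invokes proper discontinuity. The genuine gap is exactly at the step you flag as ``the main obstacle.'' The assertion that the energy of $h'_n$ over the preimage of \emph{any} fixed compact subset $Q$ of the target convex core must blow up is the whole difficulty, and it does not follow from Wolf's properness, which controls total energy but says nothing about where the energy sits: a priori the excess energy could concentrate over the boundary collars $C_{d/2}(\xi)$ of the convex core, precisely where the local Lipschitz constant of $g$ tends to $1$ and your factor $1-\lambda_Q^2$ yields nothing. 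The paper's proof is organized around this danger: it localizes in the \emph{source}, on the collar around a curve $\gamma$ with $\ell(\psi_n^*\rho_1(\gamma))\to\infty$ (such a $\gamma$ exists because the length spectrum of $\psi_n^*\rho_1$ coincides with that of $\rho_1$, hence is bounded below), and then Lemma \ref{bigcurve} runs a surgery argument on the image circles $h_n(\alpha)$ --- replacing their excursions into $C_{d/2}(\xi)$ by boundary arcs in the correct homotopy class, with a separate treatment of unboundedly many crossings --- to show that the part of $h_n(\alpha)$ outside $C_{d/2}(\xi)$ has length tending to infinity. Your sketch offers no substitute for this step.

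Two further points. First, your concrete implementation is too narrow: picking $Q$ around arcs transverse to a pants curve whose Fenchel--Nielsen \emph{twist} blows up only covers twisting degenerations, whereas for general diverging $[\psi_n]$ (e.g.\ powers of a pseudo-Anosov) the length coordinates of $\psi_n^*\rho_1$ relative to the convergent source $\psi_n^*\mu_n\to\mu_\infty$ blow up as well; moreover the ``uniform convergence estimates from Sections 3.3 and 3.4'' are continuity statements for harmonic maps under small perturbations of a fixed metric and representation, so they cannot be used to transfer bounds along the divergent sequence $\psi_n^*\rho_1$. Second, to invoke Lemma \ref{min} you must verify that $g\circ h'_n$ converges at infinity to a parametrization of the geodesic axis in the sense of Definition \ref{convalpha}; this does hold, by the boundary behaviour of $g$ required in Definition \ref{asd}, but it is a hypothesis of the lemma and should be checked, as the paper does.
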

The main thrust of the proof is to show that for each non-peripheral $\gamma\in \Gamma$, $\ell(\rho_1\circ \psi_n(\gamma))$ is uniformly bounded above in $n$. Our proof of this claim is by contradiction and similar in nature to the argument above. Suppose there is a class of curves $\gamma\in\Gamma$ such that $\ell(\rho_1\circ \psi_n(\gamma))\to \infty$ as $n\to\infty$. Modify the notation: set $h_n = h_{\mu_n}\circ \psi_n$ and $f_{\mu_n}\circ \psi_n$. These are harmonic for $\psi_n^*\rho_1$ and $\psi_n^*\rho_2$ respectively, and have the correct boundary behaviour according to Theorem \ref{S}. From (\ref{25}), we also have the equality $$\E_{\rho_1,\rho_2}(\mu_n) = \E_{\psi_n^*\rho_1,\psi_n^*\rho_2}(\psi_n^*\mu_n).$$ On each $(\Sigma,\psi_n^*\mu_n)$, there is a collar $C_n$ of finite width $w_n$ around $\gamma$, and the $C_n$'s converge to some collar $C_\infty$ in $(\Sigma,\mu_\infty)$ of width $w_\infty<\infty$. As before, we perturb to a flat metric and parametrize $C_n$ by $[0,w_n]\times S^1$.
We note that $g$ is $(\psi_n^*\rho_1,\psi_n^*\rho_2)$-equivariant. Therefore, we can apply our previous reasoning to see that for $n$ large enough, any small enough $x$, and $t\in [0,w_n]$, 
\begin{equation}\label{26}
    \E_{\rho_1,\rho_2}(\mu_n) = \E_{\psi_n^*\rho_1,\psi_n^*\rho_2}(\psi_n^*\mu_n) \geq  \frac{w_\infty}{4}\min_x(1-\textrm{Lip}(g,t)^2)\Big (\int_{A_x^t}  \Big |\frac{\partial h_n(x,y)}{\partial y}\Big | dy \Big )^2,
\end{equation}
 where $A_x^t$ is defined as above. This leads us to the analogue of Lemma \ref{nocollar}.
\begin{lem}\label{bigcurve}
Set $t=d/2$. Then, independent of the choice of $x$, $$\int_{A_x^{d/2}} \Big | \frac{\partial h_n(x,y)}{\partial y}\Big | dy \to \infty$$ as $n\to \infty$.
\end{lem}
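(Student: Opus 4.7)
The plan is to refine the argument of Lemma \ref{nocollar}, upgrading its universal lower bound $k$ to a divergent one by exploiting the hypothesis $L_n:=\ell(\rho_1\circ\psi_n(\gamma))\to\infty$. The integral to bound below equals the hyperbolic length in $\mathbb{H}/\rho_1(\Gamma)$ of the portion of the projected loop $h_n(\alpha)$ lying outside $C_{d/2}(\xi)$, where $\alpha=\{x\}\times S^1$ is any core circle of $C_n$.

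The first observation is homotopy-theoretic: since $h_n$ is $\psi_n^*\rho_1$-equivariant and $\psi_n^*\rho_1(\Gamma)=\rho_1(\Gamma)$, the projected loop $h_n(\alpha)$ represents the free homotopy class of $\psi_n(\gamma)$ in $\mathbb{H}/\rho_1(\Gamma)$. Because mapping classes preserve the set of non-peripheral conjugacy classes, $\psi_n(\gamma)$ is non-peripheral, and the generalized collar lemma places its closed geodesic representative $\gamma_n^*$ entirely outside $C_d(\xi)$, with hyperbolic length exactly $L_n$.

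Next I would decompose $h_n(\alpha)$ into an alternating concatenation $a_1 b_1 \cdots a_k b_k$, where each $a_i\subset C_{d/2}(\xi)$ and each $b_i$ lies in the complement, so the quantity to estimate is $\sum_i\ell(b_i)$. Build a comparison loop $c_n^{\sharp}$ lying in $M:=\overline{\mathbb{H}/\rho_1(\Gamma)\setminus C_{d/2}(\xi)^\circ}$ by replacing each $a_i$ with the shortest arc $\tilde a_i\subset\partial C_{d/2}(\xi)$ with the same endpoints and the same winding number around $\xi$. Then $c_n^{\sharp}$ is freely homotopic to $h_n(\alpha)$, and since $M\hookrightarrow\mathbb{H}/\rho_1(\Gamma)$ is a homotopy equivalence, $c_n^{\sharp}$ represents $\psi_n(\gamma)$ in $\pi_1$, so $\ell(c_n^{\sharp})\geq L_n$, giving
$$\sum_i \ell(b_i) + \sum_i \ell(\tilde a_i) \geq L_n.$$

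The hard part is showing $\sum_i\ell(\tilde a_i)=O(1)$ uniformly in $n$, equivalently bounding the total winding of $h_n|_\alpha$ around $\xi$. Because $h_n$ is a diffeomorphism onto the interior of the convex core, the preimage $h_n^{-1}(C_{d/2}(\xi))$ is a peripheral annular neighbourhood of the puncture in $(\Sigma,\psi_n^*\mu_n)$. The cusp asymptotics of Theorem \ref{S} (together with Lemma \ref{vary}/Lemma \ref{approxcon} and the convergence $\psi_n^*\mu_n\to\mu_\infty$) pin down the position and shape of this preimage annulus uniformly in $n$; since $\alpha$ is a fixed non-peripheral simple closed geodesic that remains (eventually) disjoint from a definite peripheral cusp neighbourhood of $\mu_\infty$, one concludes that either (i) $h_n(\alpha)\cap C_{d/2}(\xi)=\emptyset$ for $n$ large, whence $A_x^{d/2}=[0,1]$ and the integral equals $\ell(h_n(\alpha))\ge L_n\to\infty$, or (ii) the arcs $a_i$ have a uniformly bounded total winding governed by the fixed topological intersection datum $i(\alpha,\text{peripheral})=0$, so that $\sum_i\ell(\tilde a_i)=O(1)$. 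In either case the displayed inequality yields $\sum_i\ell(b_i)\ge L_n - O(1)\to\infty$, as required.
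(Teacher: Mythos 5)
Your first half runs parallel to the paper's argument: you perform surgery on $h_n(\alpha)$, replacing the excursions into $C_{d/2}(\xi)$ by boundary arcs so as to preserve the free homotopy class, and then compare with the geodesic representative of $\psi_n(\gamma)$, whose length $L_n=\ell(\psi_n^*\rho_1(\gamma))$ blows up. That part is sound. The genuine gap is exactly where you flag ``the hard part'': the claim $\sum_i\ell(\tilde a_i)=O(1)$ is not proved. Your justification rests on uniform control of $h_n^{-1}(C_{d/2}(\xi))$ coming from the cusp asymptotics of Theorem \ref{S} together with Lemmas \ref{approxcon}/\ref{vary} and $\psi_n^*\mu_n\to\mu_\infty$. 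But those lemmas give continuity for a \emph{fixed} (or convergent) representation as the source metric varies, whereas here the target representations $\psi_n^*\rho_1$ diverge --- that is precisely the hypothesis being contradicted --- so there is no uniformity in $n$ for the maps $h_n$, and no uniform localization of the preimage of the collar. Moreover, the appeal to the ``fixed topological intersection datum $i(\alpha,\text{peripheral})=0$'' is a non sequitur: vanishing intersection number does not bound the number of times the image curve $h_n(\alpha)$ dips into $C_{d/2}(\xi)$, nor the total winding accumulated over many excursions (simplicity of $h_n(\alpha)$ caps the winding of each single excursion at about one turn, but the number of excursions is a priori unbounded in $n$). So your dichotomy (i)/(ii) is not exhaustive as justified, and the key estimate collapses.

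The paper closes this gap differently, with a purely length-based dichotomy rather than geometric control of $h_n$. First, if each $h_n(\alpha)$ enters $C_{d/2}(\xi)$ at most $K$ times, each boundary replacement costs at most $\ell(\partial C_{d/2}(\xi))$, giving the $O(1)$ correction directly. For the unbounded-crossing case, it observes that between consecutive excursions there is a ``down-crossing'' arc lying outside $C_{d/2}(\xi)$, and its length already contributes to the integral you are bounding below: if the total down-crossing length diverges, the lemma is proved with no surgery at all; if it stays bounded, then only boundedly many excursions can leave the larger collar $C_d(\xi)$ (each such forces down-crossing length at least comparable to $d$), and one performs the surgery only at those boundedly many ``macroscopic'' crossing points $b_1,\dots,b_{K_n}$, again paying only $K\ell(\partial C_{d/2}(\xi))=O(1)$. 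Replacing your claimed uniform control by this dichotomy is what is needed to make your outline work.
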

\begin{proof}
Choose any simple closed curve $\alpha$ of the form $\{x\}\times S^1$. Parametrize $h_n(x,\cdot)$ by arc length so that the integral above returns the length of the segment of $h_n(\alpha)$ that does not enter $C_{d/2}(\xi)$. If $h_n(\alpha)$ does not enter $C_{d/2}(\xi)$, then we can argue as we did in the proof of Lemma \ref{nocollar}, bounding the length below by the $\ell(\psi_n^*\rho_1(\gamma))$, which blows up. Thus, we restrict our discussion to $\alpha$ such that $h_n(\alpha)$ intersects $C_{d/2}(\xi)$. Let us first assume there is a positive integer $K$ such that every $h_n(\alpha)$ enters $C_{d/2}(\xi)$ at most $K$ times. Of course, $h_n(\alpha)$ cannot live entirely in $C_{d/2}(\xi)$, for then it lies in the wrong homotopy class. We construct a new curve as follows: if we choose a basepoint not within $C_{d/2}(\xi)$, then every time $h_n(\alpha)$ enters $C_{d/2}(\xi)$, there is a corresponding point at which it exits.
We plan to erase the segment of $h_n(\alpha)$ that connects these two points and replace it with the one of the two possible paths on $\partial C_{d/2}(\xi)$, say, $\beta_1$ and $\beta_2$. If $h_n(\alpha)$ spends some time going in a path along the boundary circle when it is entering or when it is exiting (we count just touching it once as an exit), we connect the endpoint of the entrance path to the starting point of the exiting path and concatenate with the path going along the boundary.

The question is: which path to take? We argue there is a choice so that the homotopy class of the new curve is the same as that of $h_n(\alpha)$. To this end, orient the boundary circle so that there is a ``left" and a ``right" path. Arbitrarily choose one of the two paths connecting the entrance and the exit point, say, $\beta_1$, and consider the loop in the cylinder obtained by concatenating this path with the piece of $h_n(\alpha)\cap C_{d/2}(\xi)$ that connects the endpoints. This is a simple closed curve in a cylinder, and hence there are three possible homotopy classes: trivial, non-trivial and left oriented, and non-trivial and right oriented. If the class is trivial, then using this path $\beta_1$ will work. If it is non-trivial and left oriented, then $\beta_1$ must be the right oriented path. Thus, if we use the opposite path $\beta_2$, it will cancel the homotopy class, and therefore the new path will indeed be homotopic to $h_n(\alpha)$. The right oriented case is similar. 

Each replacement adds at most $\ell(\partial C_{d/2}(\xi))$ to the length of the path. Thus, the length of the new path is (quite crudely) bounded above by $$\ell_{d/2}(h_n(\alpha))+ K\ell(\partial C_{d/2}(\xi)).$$ Therefore, $$\ell_{d/2}(h_n(\alpha))\geq \ell( \psi_n^*\rho_1(\gamma))- K\ell(\partial C_{d/2}(\xi))$$ which tends to $\infty$ as $n\to \infty$, and moreover establishes the result of the lemma.

We are left to consider the case of an unbounded number of crossings into $C_{d/2}(\xi)$ as we take $n\to\infty$. The idea is that each time $h_n(\alpha)$ crosses $\partial C_{d/2}(\xi)$, there is a corresponding ``down-crossing," a curve that connects the point at which it exits to the new point of entry. If each down crossing is denoted by $c_j^n$, then $$\int_{A_x^{d/2}} \Big | \frac{\partial h_n(x,y}{\partial y}\Big | dy \geq \sum_j \ell(c_j^n).$$ Clearly, if the limit of these sums is infinite, then we are done. Hence, we assume the total length due to down-crossings is finite. This implies that there is an even integer $K>0$ such that, as we take $n\to\infty$, there are at most $K$ down-crossing that exit $C_d$. Again using our chosen basepoint, let $b_1$ be the first crossing into $C_{d/2}(\xi)$, and then let $b_2$ be the first crossing out of $C_{d/2}(\xi)$ that exits $C_d$. Set $b_3$ to be the next entry point into $C_{d/2}(\xi)$, and $b_4$ the next exit point from $C_d(\xi)$. We end up with $K_n\leq K$ points $b_1,\dots b_{K_n}$. Replace the segments of $h_n(\alpha)$ between $b_i$ and $b_{i+1}$ with the correct arc on $\partial C_{d/2}(\xi)$ that does not change the homotopy class, where $i=1,3,\dots, K_n-1\leq K-1$. As above, we end up with a curve of length at most $$\ell_{d/2}(h_n(\alpha))+ K\ell(\partial C_{d/2}(\xi))$$ and homotopic to $\ell(\psi_n^*\rho_1(\gamma))$. Using the minimizing property of geodesics once again, we have $$\ell_{d/2}(h_n(\alpha))\geq \ell(\psi_n^*\rho_1(\gamma))- K\ell(\partial C_{d/2}(\xi))\to \infty,$$ and the resolution of this final case completes the proof.
\end{proof}
Returning to (\ref{26}), this lemma shows $\mathcal{E}(\mu_n)\to \infty$, which is a contradiction. We can now conclude that each sequence $\ell(\psi_n^*\rho_1(\gamma))$ remains bounded above.

It is well-understood that the boundedness of the length spectrum implies that $(\psi_n^*\rho_1)$ converges along a subsequence to some new Fuchsian representation $\rho_\infty: \Gamma \to \textrm{PSL}_2(\mathbb{R})$. We now restrict the $[\mu_n], [\psi_n]$ to this chosen subsequence. Since the mapping class group acts properly discontinuously on the Teichm{\"u}ller space, it follows that $[\psi_n^*\rho_1]=[\rho_\infty]$ for all $n$ large enough. In particular, $\psi_n$ is isotopic to $\psi_m$ for all $n,m$ large enough. This completes the proof of Lemma \ref{evenconstant}.

To finish the proof of Proposition \ref{proper}, we know there is an $N$ such that $[\psi_n]=[\psi_N]$ for all $n\geq N$. As the mapping class group acts by isometries with respect to the metric $d_{\mathcal{T}}$, $$d_{\mathcal{T}}([\mu_n],[(\psi_N^{-1})^*\mu_\infty])=d_{\mathcal{T}}([\psi_N^*\mu_n],[\mu_\infty])\to 0,$$ and hence $[\mu_n]$ converges to $[(\psi_N^{-1})^*\mu_\infty]$ as $n\to\infty$. Proposition \ref{proper} is proved.
\end{subsection}
\begin{subsection}{Uniqueness of critical points}
From now on we refine our notation: when the representation $\rho$ is not implicit, the $\rho$-equivariant harmonic map for a metric $\mu$ will now be denoted $f_\mu^{\rho}$. Similarly, if we consider maps $(\tilde{\Sigma},\tilde{\mu})\to (X,\nu)$, we write $e(\mu,\nu,\cdot)$ for an energy density when necessary, rather than $e(\mu,\cdot)$.

We move to the uniqueness statement in Theorem A, which amounts to showing critical points of $\E$ are unique. We can argue similarly to \cite[Theorem 1]{T}. Suppose $\rho_1$ represents a point in $\mathcal{T}_{\mathbf{c}}(\Gamma)$ and almost strictly dominates $\rho_2$, and let $\mu_1$ be a hyperbolic metric representing a critical point in $\mathcal{T}(\Gamma)$ for $\E=\E_{\rho_1,\rho_2}$. Let $\mu_0$ be another hyperbolic metric, representing the point in the Teichm{\"u}ller space $\mathcal{T}_{\mathbf{c}}(\Gamma)$ associated to $\rho_1$ and chosen so that the identity map $\textrm{Id}: (\tilde{\Sigma},\tilde{\mu}_1)\to (\tilde{\Sigma},\tilde{\mu}_0)$ is harmonic. This can be achieved by replacing the target metric with its pullback by the harmonic map, which represents the same point in $\mathcal{T}_{\mathbf{c}}(\Gamma)$. Apriori this is the holonomy of the convex core of a hyperbolic surface, so the universal cover is a proper subset of $\tilde{\Sigma}$ and the identity map only goes into that subset, but we can extend to a global metric by attaching a hyperbolic half-space. Let $\mu_2$ represent yet another point in $\mathcal{T}(\Gamma)$, such that $[\mu_1]\neq [\mu_2]$ in Teichm{\"u}ller space and $\textrm{Id}:(\tilde{\Sigma},\tilde{\mu}_2)\to(\tilde{\Sigma},\tilde{\mu}_0)$ is harmonic--we similarly consider $\textrm{Id}$ as a map to the preimage of the convex core. Also let $\rho$ be the representation associated to $\mu_2$, so that $f_{\mu_1}^\rho:(\Sigma,\mu_1)\to(\Sigma,\mu_2)$ is the harmonic map. We then have decompositions of the pullback metrics
\begin{itemize}
    \item $\mu_0=e(\mu_1,\mu_0,\textrm{Id})\mu_1 + \Phi(f_{\mu_1}^{\rho_2}) + \overline{\Phi}(f_{\mu_1}^{\rho_2}),$
    \item $(f_{\mu_1}^{\rho_2})^*\nu=e(\mu_1,\nu,f_{\mu_1}^{\rho_2})\mu_1+\Phi(f_{\mu_1}^{\rho_2})+\overline{\Phi}(f_{\mu_1}^{\rho_2})$,
    \item $\mu_2=e(\mu_1,\mu_2,f_{\mu_1}^\rho)\mu_1+\Phi(f_{\mu_1}^\rho) + \overline{\Phi}(f_{\mu_1}^\rho)$.
\end{itemize}
In the proof of \cite[Lemma 2.4]{T}, Tholozan computes a formula in local coordinates for the difference of certain energy densities. The computation did not use compactness, so we can use it directly to get
\begin{equation}\label{21}
    (e(\mu_2,\mu_0,\textrm{Id}) - e(\mu_2,\nu,(f_{\mu_1}^{\rho_2})^*\nu))dA_{\mu_2} = \frac{e(\mu_1,\mu_0,\textrm{Id})- e(\mu_1,\nu,(f_{\mu_1}^{\rho_2})^*\nu)}{\sqrt{1- 4\frac{||\Phi(f_{\mu_1}^\rho)||_{\mu_1}^2}{e(\mu_1,\mu_2,f_{\mu_1}^\rho)^2}}}dA_{\mu_1}.
\end{equation}
The denominator lies in $(0,1]$, tends to $0$ at the cusp, and is $1$ exactly when $\overline{\Phi}(f_{\mu_1}^\rho) = 0$. Inputting into our integrals yields $$\int_\Sigma e(\mu_2,\mu_0,\textrm{Id}) - e(\mu_2,\nu,(f_{\mu_1}^{\rho_2})^*\nu)dA_{\mu_2} \geq \E(\mu_1)$$ By Lemma \ref{min}, we have $$\int_\Sigma e(\mu_2,\nu,(f_{\mu_2}^{\rho_2})^*\nu)- e(\mu_2,\nu,(f_{\mu_1}^{\rho_2})^*\nu) dA_{\mu_2} \leq 0,$$ and hence $\E(\mu_2)\geq \E(\mu_1)$, and from (\ref{21}) we have equality if and only if $[\mu_1]=[\mu_2]$ in $\mathcal{T}(\Gamma)$. This establishes the result. As previously discussed, the proof of Theorem A is complete.
\end{subsection}
\end{section}

\begin{section}{Deformations of maximal surfaces}
\begin{subsection}{The maps $\Psi^\theta$}
We now begin the proof of Theorem B. Generalizing the map $\Psi$ from \cite[subsection 2.3]{T}), we define the map $$\Psi^\theta: \mathcal{T}(\Gamma)\times \textrm{Rep}_{\mathbf{c}}(\Gamma,G)\to \textrm{ASD}_{\mathbf{c}}(\Gamma,G)\subset \mathcal{T}_{\mathbf{c}}(\Gamma)\times \textrm{Rep}_{\mathbf{c}}(\Gamma,G)$$ as follows. When $\mathbf{c}$ has no hyperbolic classes, we implicitly assume there is no $\theta$. Begin with a hyperbolic surface $(\Sigma,\mu)$ and a reductive representation $\rho_2:\Gamma\to \PSL(2,\mathbb{R})$. Associated to this representation, we take a $\rho_2$-equivariant harmonic map with twist parameter $\theta$, $f^\theta:(\tilde{\Sigma},\tilde{\mu})\to (X,\nu)$. We proved in \cite[Theorem 1.4]{S} that there exists a unique Fuchsian representation $\rho_1:\Gamma\to \PSL(2,\mathbb{R})$ and a unique $\rho_1$-equivariant harmonic diffeomorphism $h^\theta:(\tilde{\Sigma},\mu)\to(\mathbb{H},\sigma)$ that has Hopf differential $\Phi(h^\theta)=\Phi(f^\theta)$. From \cite[Proposition 3.13]{S}, the mapping $F=(h^\theta,f^\theta)$ is a spacelike maximal immersion into the pseudo-Riemannian product, and hence $\rho_1$ almost strictly dominates $\rho_2$. $\Psi^\theta$ is defined by $$\Psi^\theta([\mu],[\rho_2])=([\rho_1],[\rho_2]).$$ It is clear that the map is fiberwise in the sense of the statement of Theorem B. Theorem A shows $\Psi^\theta$ is bijection, and the content of Theorem B is that $\Psi^\theta$ is a homeomorphism.
\begin{remark}
We do not know if $\Psi^\theta=\Psi^{\theta'}$ for $\theta\neq \theta'$!
\end{remark}
\end{subsection}

\begin{subsection}{Proof of Theorem B: continuity}
 The proof of continuity is almost identical for each twist parameter, so we work only with $\Psi=\Psi^0$. We continue to use the notation from Section 4.3: when $\rho$ is not fixed, the $\rho$-equivariant harmonic map for a metric $\mu$ is denoted $f_\mu^{\rho}.$ Let us also assume for the remainder of this section that there is a single peripheral $\zeta$. Lifting various equivalence relations, $\Psi$ is described by a composition $$(\mu,\rho_2)\mapsto (f_\mu^{\rho_2},\rho_2) \mapsto (\Phi(f_\mu^{\rho_2}),\rho_2) \mapsto (\rho_1,\rho_2)$$ where $\rho_1$ is the holonomy of the hyperbolic metric $\mu'$ on $\Sigma$ such that the associated harmonic map from $ (\Sigma,\mu)\to (\Sigma,\mu')$ has Hopf differential $\Phi(f_\mu^{\rho_2})$. \cite[Theorem 1.4]{S} implies the association from $\Phi(f_\mu^{\rho_2}) \mapsto \rho_1$ is continuous. Here, the topology on the space of holomorphic quadratic differentials is the Fr{\'e}chet topology coming from taking $L^1$-norms over a compact exhaustion. To show continuity of $\mu\mapsto f_\mu^{\rho_2}\mapsto \Phi(f_\mu^{\rho_2})$ with respect to this topology, note that, by the constructions of Section 3, we already have continuity in the Teichm{\"u}ller coordinate. Continuity will thus follow from the results below.

\begin{lem}\label{varyrep}
Suppose representations $(\rho_n)_{n=1}^\infty$ converge to an irreducible $\rho$ in $\textrm{Hom}(\Gamma,G)$, and all such representations project to $\textrm{Rep}_{\mathbf{c}}(\Gamma,G)$. Then $f_\mu^{\rho_n}$ converges to $f_\mu^\rho$ in the $C^\infty$ sense on compacta as $n \to \infty$.
\end{lem}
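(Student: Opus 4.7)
The plan is to mirror the compactness scheme of Lemmas \ref{compactvar1} and \ref{vary}, but now holding the source metric $\mu$ fixed and letting the representation vary. Since each $\rho_n$ projects to $\textrm{Rep}_{\mathbf{c}}(\Gamma,G)$, the isometry $\rho_n(\zeta)$ is conjugate to $\rho(\zeta)$, so the isometry types match. In the hyperbolic case this forces $\ell(\rho_n(\zeta)) = \ell(\rho(\zeta)) = \ell$, and the geodesic axes $\beta_n$ of $\rho_n(\zeta)$ converge to the axis $\beta$ of $\rho(\zeta)$; constant speed parametrizations $\alpha_n \colon [0,\tau_\mu] \to \beta_n$ can then be chosen converging to a parametrization $\alpha$ of $\beta$.

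First I would build a continuously varying family of comparison maps. In the elliptic or parabolic case, $f_\mu^{\rho_n}$ has finite energy, and one can perturb the $\rho$-equivariant finite energy map from \cite[Proposition 3.8]{S} into $\rho_n$-equivariant maps $g_n$ with uniformly bounded energy by a small continuous deformation. In the hyperbolic case, construct the approximation maps $\varphi_{\rho_n}$ as in Section 3.1, using boundary data $\alpha_n$ along $\beta_n$, and establish $\varphi_{\rho_n} \to \varphi_\rho$ in $C^\infty$ on compacta by adapting Lemma \ref{approxcon}: produce equivariant test maps correcting $\varphi_\rho$ for the change of axis, whose energies are uniformly bounded, and apply the energy-minimizing property together with uniqueness.

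Once uniform comparison maps are in hand, the estimate (\ref{10}) yields uniform total energy bounds for $f_\mu^{\rho_n}$ on exhaustions $\Sigma_s$; Cheng's lemma plus interior elliptic estimates then give uniform $C^\infty$ bounds on compacta. Irreducibility of $\rho$ and Lemma \ref{don} confine the image of any compact set into a fixed compact subset of $X$, and the witnessing elements $\gamma \in \Gamma$ coming from the proof of Lemma \ref{don} continue to work for $\rho_n$ sufficiently close to $\rho$, by compactness of $\partial_\infty X$ and the openness of the strict inequalities involved. A standard Arzel\`a-Ascoli plus diagonal argument then extracts a subsequential $C^\infty_{\mathrm{loc}}$ limit $f_\infty$, which is $\rho$-equivariant harmonic by equivariance. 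In the hyperbolic case, the bound $d(f_\mu^{\rho_n}, \varphi_{\rho_n}) \le C$ from \cite[Lemma 5.2]{S} passes to the limit as $d(f_\infty, \varphi_\rho) \le C$, so the uniqueness clause of Theorem \ref{S} at twist parameter $0$ forces $f_\infty = f_\mu^\rho$; in the finite energy case, uniqueness of finite energy harmonic maps is immediate. Since every subsequence has a further subsequence converging to the same limit, the whole sequence $f_\mu^{\rho_n}$ converges.

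The main obstacle lies in the hyperbolic case: continuously constructing the approximation maps $\varphi_{\rho_n}$ so that their total energies are uniformly controlled as the axis $\beta_n$ moves. This requires an equivariant interpolation adapted to a varying group action, e.g. by trivializing the deformation $\beta_n \to \beta$ via an isometry $g_n \in G$ conjugating $\rho_n(\zeta)$ into $\rho(\zeta)$ and transferring $\varphi_\rho$ through this conjugation. Away from this point, the argument is essentially a parameter-varying version of the analysis already carried out in Section 3.
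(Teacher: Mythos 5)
Your overall scheme is the same as the paper's: construct approximation maps $\varphi_n$ for $\rho_n$ with uniformly controlled energy, run the estimate (\ref{10}) to get uniform energy bounds for $f_\mu^{\rho_n}$ on exhaustions, use Cheng's lemma, elliptic estimates, irreducibility via Lemma \ref{don} and Arzel\`a--Ascoli to extract a limit, and identify the limit with $f_\mu^\rho$ through the bounded-distance-to-$\varphi$ characterization and the uniqueness in Theorem \ref{S}. That part is fine, and your remark that the translation lengths are pinned down by $\mathbf{c}$ is correct.

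However, the step you yourself single out as the main obstacle --- producing $\rho_n$-equivariant test maps close to $\varphi$ with uniformly bounded energy --- is not resolved by the mechanism you propose, and this is a genuine gap. An isometry $g_n\in G$ with $g_n\rho_n(\zeta)g_n^{-1}=\rho(\zeta)$ only conjugates the peripheral cyclic subgroup; it does not conjugate $\rho_n$ into $\rho$ (the representations are not conjugate in general), so $g_n^{-1}\circ\varphi_\rho$ is equivariant only with respect to $\langle\zeta\rangle$ and fails to be $\Gamma$-equivariant on $\tilde{\Sigma}_2$. The same objection applies to the finite-energy case, where you ``perturb'' the $\rho$-equivariant map of \cite[Proposition 3.8]{S} into $\rho_n$-equivariant maps by a ``small continuous deformation'': a map equivariant for one representation cannot simply be deformed pointwise into one equivariant for a nearby, non-conjugate representation; some gluing device respecting the whole group action is required. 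The paper supplies exactly this device: it realizes $\rho$ and all $\rho_n$ as holonomies of flat connections $\nabla$, $\nabla_n$ on a fixed $X$-bundle $E\to\Sigma$, built from a good cover chosen so that $\nabla_n\to\nabla$ in $C^\infty$ over $\Sigma_2$ (this uses the standing local-system/principal-bundle hypothesis and that all $\rho_n$ lie in one component of the relative representation space); the section $s$ of $(E,\nabla)$ induced by $\varphi$ is then reinterpreted as a section of $(E,\nabla_n)$, whose lift is a genuinely $\rho_n$-equivariant map $\tilde{\varphi}_n$ converging to $\varphi$ in $C^\infty$, and $\varphi_n$ is taken to be the harmonic map with the nearby geodesic boundary data, inheriting uniform energy bounds. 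Without this (or an equivalent equivariant interpolation argument), your uniform energy bounds for $\varphi_n$, and hence for $f_\mu^{\rho_n}$, are unjustified. A secondary, fixable point: your claim that $d(f_\mu^{\rho_n},\varphi_n)\leq C$ uniformly in $n$ is not immediate from \cite[Lemma 5.2]{S}; the paper instead uses that this distance is maximized on $\partial\Sigma_2$ and passes to the limit there using the uniform convergence on that compact set.
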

Below, we work in fixed local sections of the principal bundles $\chi_{\mathbf{c}}(\Gamma,G)\to \textrm{Rep}_{\mathbf{c}}(\Gamma,G)$ $\chi_{\mathbf{c}}(\Gamma,\PSL(2,\mathbb{R}))\to \textrm{Rep}_{\mathbf{c}}(\Gamma,\PSL(2,\mathbb{R}))$ (having assumed in Section 1.2 that such things exist). We choose a basepoint for the $\pi_1$ so that we can view these local systems through their holonomies, which are genuine representations.
\begin{proof}
We may assume each $\rho_n$ is irreducible and that there is a single cusp. The parabolic and elliptic cases are much simpler than the hyperbolic case, so we treat only the latter. Set $f_n=f_{\mu}^{\rho_n}$, $f=f_\mu^\rho$ and let $\varphi_n$, $\varphi$ be the approximation maps for $f_\mu^{\rho_n}$, $f_\mu^\rho$ from Section 3.1. These maps project a cusp neighbourhood onto a geodesic, but now the geodesic is varying with $\rho_n$, and converging in the Gromov-Hausdorff sense to the geodesic for $\rho$. If the $\varphi_n$'s can be chosen to vary smoothly, then we have a uniform bound on the total energies (recall they depend on a choice of a constant speed map onto their geodesic). As discussed in Section 3 (in particular the proof of Lemma \ref{vary}), this yields a uniform total energy bound for $f_n$ on compacta. Via the argument described in Section 3.1, the maps $f_n$ $C^\infty$-converge on compacta along a subsequence to some limiting map $f^\infty$. The mapping is necessarily $\rho$-equivariant and harmonic. To see that $f^\infty=f$, recall that $f$ is characterized by the fact that its Hopf differential has a pole of order $2$ at the cusp and the residue has a specific complex argument. To check this property, from the uniqueness in Theorem \ref{S} it suffices to check $d(f_\infty,\varphi)<\infty$, and the standard contradiction argument will also show there is no need to pass to a subsequence. As remarked previously, the proof of \cite[Lemma 5.2]{S} shows $d(f_n,\varphi_n)$ is maximized on $\partial\Sigma_2$. If $n_k$ is the subsequence, then taking $k\to\infty$, via compactness we do win $$d(f_\infty,\varphi_\infty)\leq \limsup_{k\to\infty} \max_{\partial\Sigma_2}d(f_{n_k},\varphi_{n_k}) \leq  \max_{\partial\Sigma_2}d(f_\infty,\varphi_\infty) + 1< \infty.$$
We are left to argue that one can choose the $\varphi_n$ so that $\varphi_n\to \varphi$ in the $C^\infty$ sense. We realize $\rho,\rho_n$ as monodromies of flat connections $\nabla,\nabla_n$ respectively on an $X$-bundle $E$ over $\Sigma$ with structure group $G$. This is possible when all $\rho_n$ lie in the same component of the relative representation space (\cite[Corollay 6.1.2]{La} generalizes to relative representation spaces), which we are free to assume. The pullback bundle with respect to the universal covering $\tilde{\Sigma}\to \Sigma$ identifies with the trivial bundle and also pulls $\nabla,\nabla_n$ back to flat connections $\tilde{\nabla},\tilde{\nabla}_n$. That is, up to an isomorphism, we have a family of commutative diagrams 
$$\begin{tikzcd}
(X\times \tilde{\Sigma},\tilde{\nabla}_n) \arrow[r]\arrow[d] & (E,\nabla_n) \arrow[d] \\
\tilde{\Sigma} \arrow[r] & \Sigma.
\end{tikzcd}$$
The bundle $E$ and connections $\nabla_n$ are constructed by choosing a good covering of $\Sigma$ and depend on the local section of the principal bundle $\chi_{\mathbf{c}}(\Gamma,G)\to \textrm{Rep}_{\mathbf{c}}(\Gamma,G)$ (see the proof of \cite[Lemma 6.1.1]{La}). We can build a good covering by glueing a good covering of a relatively compact tubular neighbourhood of $(\Sigma_2,\mu)$ with a good covering of a tubular neighbourhood of $(\Sigma\backslash \Sigma_2,\mu)$. This ensures a lower bound on the $\mu$-radius inside $\Sigma_2$ (note that we cannot do this on all of $\Sigma$). With this constraint, the local systems can be prescribed so that $\nabla_n\to\nabla$ in the $C^\infty$ sense on $\Sigma_2$ in the affine space of connections.

The map $\varphi$ induces a section $s$ of the bundle $(E,\nabla)$, which can also be seen as a section $s_n$ of $(E,\nabla_n)$. With respect to the diagram above, $s_n$ pulls back to a $\rho_n$-equivariant map $\tilde{\varphi}_n:\mathbb{H}\to X$, and by our comments above, $\tilde{\varphi}_n\to \varphi$ in the $C^\infty$ sense. The maps $\tilde{\varphi}_n$ are most likely not harmonic and may not project onto the geodesic axis of $\rho_n(\zeta)$. However, because they are converging to $\varphi$, the geodesic curvature of $\tilde{\varphi}_n(\partial\tilde{\Sigma}_2)$ is tending to $0$. Moreover, $\varphi|_{\partial\tilde{\Sigma}_2}$ is arbitrarily close to some parametrization of a geodesic. So, we set $\varphi_n$ to be the harmonic map with boundary data equal to this nearby geodesic projection. From energy control on $\tilde{\varphi}_n$, we get the same for $\varphi_n$, and hence convergence along a subsequence to a harmonic map $\varphi_\infty$. From the boundary values, we must have $\varphi_\infty=\varphi$, and as usual we can show there is no need to pass to a subsequence.
\end{proof}
For a reducible representation, the harmonic maps are not unique but differ by translations along a geodesic. However, the Hopf differential is unique. We can choose a sequence $\varphi_n\to\varphi$ as in the previous proposition---boundary values are fixed so we do have uniqueness for these harmonic maps. Then we take the harmonic maps $f_n$ built from using the approximating maps $\varphi_n$, and the proof above goes through, with the minor modification that we must rescale the approximating maps as in \cite[Proposition 5.1]{S}. $C^\infty$ convergence implies the Hopf differentials converge.
\begin{remark}
For different twist parameters, we simply precompose every $\varphi_n$ with a fractional Dehn twist. Using (\ref{fdehn}), the energies are uniformly controlled.
\end{remark}

\end{subsection}

\begin{subsection}{Asymmetric metrics on Teichm{\"u}ller spaces} In \cite[Section 8]{GK}, Gu{\'e}ritaud-Kassel define asymmetric pseudo-metrics on deformation spaces of geometrically finite hyperbolic manifolds. These metrics are natural generalizations of Thurston's asymmetric metric on Teichm{\"u}ller space. We will use such a metric in the proof of bi-continuity of $\Psi^\theta$.

For $[\rho_1],[\rho_2]\in \mathcal{T}_\mathfrak{c}(S_{g,n})$, we set 
$$C(\rho_1,\rho_2) = \inf \{\textrm{Lip}(f) : f:\mathbb{H}\to \mathbb{H} \textrm{ is }(\rho_1,\rho_2)-\textrm{equivariant} \}.$$
The metric $d_{Th}: \mathcal{T}_\mathfrak{c}(S_{g,n})\times \mathcal{T}_\mathfrak{c}(S_{g,n})\to\mathbb{R}$ is defined by $$d_{Th}([\rho_1],[\rho_2]) = \log \Big ( C(\rho_1,\rho_2) \frac{\delta(\rho_1)}{\delta(\rho_2)}\Big ).$$ Here $\delta(\rho_1)$ is the critical exponent of $\rho_1$: $$\delta(\rho_1) = \lim_{R\to\infty} \frac{1}{R}\log \#(j(\Gamma)\cdot p \cap B_p(R)),$$ where $p$ is any point in $\mathbb{H}$ and $B_p(R)$ is the ball of radius $r$ centered at $p$. Alternatively, $\delta(\rho_1)$ is the Hausdorff dimension of the limit set of $\rho_1$ in $\partial_\infty\mathbb{H}=S^1$. If all $a_j=0$, so that all critical exponents are $1$ and $\mathcal{T}_\mathfrak{c}(S_{g,n})$ is the ordinary Teichm{\"u}ller space of a surface of finite volume, then this agrees with Thurston's asymmetric metric introduced in \cite{Th}. Here asymmetric means that in general, $$d_{Th}([\rho_1],[\rho_2])\neq d_{Th}([\rho_2],[\rho_1]).$$
\begin{prop}[Gu{\'e}ritaud-Kassel, Lemma 8.1 in \cite{GK}]
The function $d_{Th}:\mathcal{T}_{\mathbf{c}}(\Gamma)\times \mathcal{T}_{\mathbf{c}}(\Gamma)\to \mathbb{R}$ is a continuous asymmetric metric.
\end{prop}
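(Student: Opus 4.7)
The plan is to verify the asymmetric metric axioms and then the continuity. The central input is an orbital-counting estimate: if $f:\mathbb{H}\to\mathbb{H}$ is $\lambda$-Lipschitz and $(\rho_1,\rho_2)$-equivariant, then fixing $p \in \mathbb{H}$ and setting $q = f(p)$, equivariance combined with the Lipschitz condition yields
\[ d(\rho_2(\gamma)q, q) \leq \lambda\, d(\rho_1(\gamma)p, p) \]
for every $\gamma \in \Gamma$. Hence the Poincar\'e counting function for $\rho_1$ about $p$ at radius $R$ is bounded by that for $\rho_2$ about $q$ at radius $\lambda R$; letting $R \to \infty$ gives $\delta(\rho_1) \leq \lambda \delta(\rho_2)$, and infimizing over $\lambda$ yields $C(\rho_1,\rho_2) \geq \delta(\rho_1)/\delta(\rho_2)$. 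This is the core estimate underlying non-negativity of $d_{Th}$. The triangle inequality follows from the submultiplicativity $C(\rho_1,\rho_3) \leq C(\rho_1,\rho_2) C(\rho_2,\rho_3)$, obtained by composing a near-optimal $(\rho_1,\rho_2)$-equivariant map with a near-optimal $(\rho_2,\rho_3)$-equivariant map, together with the telescoping of the logarithmic $\delta$-ratios.

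For the separation axiom, I would show that $d_{Th}([\rho_1],[\rho_2]) = 0$ produces an infimizing sequence of $\lambda_n$-Lipschitz $(\rho_1,\rho_2)$-equivariant maps with $\lambda_n \to \delta(\rho_1)/\delta(\rho_2)$; a centering and Arzel\`a--Ascoli argument produces a limit map achieving equality in the orbital counting inequality, and a rigidity result for such extremal maps (a boundary-map argument matching the limit sets) forces it to descend to an isometric conjugacy from $\rho_1$ to $\rho_2$. For continuity, it suffices to show that $C$ and $\delta$ are each continuous on $\mathcal{T}_{\mathbf{c}}(\Gamma)$ and $\mathcal{T}_{\mathbf{c}}(\Gamma) \times \mathcal{T}_{\mathbf{c}}(\Gamma)$ respectively. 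Continuity of $\delta$ is standard under the geometrically-finite hypothesis imposed by fixing $\mathbf{c}$: the limit set varies continuously in the Hausdorff topology on $\partial_\infty \mathbb{H}$, and the critical exponent inherits continuity. For $C$, upper semicontinuity is constructive, since a near-optimal equivariant map for $(\rho_1,\rho_2)$ can be transported to a slightly-less-optimal map for $(\rho_1',\rho_2')$ via the flat-bundle machinery of Section 2.1; lower semicontinuity follows from another Arzel\`a--Ascoli compactness argument on a sequence of near-optimal equivariant Lipschitz maps, with the conjugacy classes $\mathbf{c}$ preventing escape at peripherals.

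The main obstacle will be the lower semicontinuity of $C$ in this non-compact setting: a minimizing sequence of equivariant Lipschitz maps could in principle drift in the target $\mathbb{H}$ or degenerate near the cusps and funnels, so extracting a subsequential limit requires both a centering argument (as in the renormalizations via translations of Section 3) and uniform asymptotic control near peripherals provided by $\mathbf{c}$, analogous to the asymptotic analysis for infinite energy harmonic maps. The separation axiom is equally delicate: one cannot avoid invoking a rigidity theorem that promotes equality in the orbital counting bound to an isometric conjugacy.
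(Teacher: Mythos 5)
The first thing to say is that the paper contains no proof of this statement: it is imported verbatim from Gu\'eritaud--Kassel \cite{GK} (their Lemma 8.1) and used as a black box, so there is no internal argument to compare yours against; what you have written is an attempted reconstruction of their lemma. Within that reconstruction, your orbital-counting step is correct as far as it goes: a $\lambda$-Lipschitz $(\rho_1,\rho_2)$-equivariant map gives $d(\rho_2(\gamma)q,q)\le\lambda\,d(\rho_1(\gamma)p,p)$, hence $N_{\rho_1}(R)\le N_{\rho_2}(\lambda R)$ and $\delta(\rho_1)\le\lambda\,\delta(\rho_2)$, so $C(\rho_1,\rho_2)\ge\delta(\rho_1)/\delta(\rho_2)$. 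But be careful about what this buys: it shows non-negativity of $\log\bigl(C(\rho_1,\rho_2)\,\delta(\rho_2)/\delta(\rho_1)\bigr)$, whereas the formula displayed in the paper carries the factor $\delta(\rho_1)/\delta(\rho_2)$; with that normalization your inequality only yields $d_{Th}\ge 2\log\bigl(\delta(\rho_1)/\delta(\rho_2)\bigr)$, which is negative whenever $\delta(\rho_1)<\delta(\rho_2)$. So either the correction factor must enter the other way (which is what your estimate actually supports) or a different inequality is needed; as written, your claim that this ``is the core estimate underlying non-negativity'' does not match the stated definition. The triangle-inequality step is fine under either normalization, since $C(\rho_1,\rho_3)\le C(\rho_1,\rho_2)C(\rho_2,\rho_3)$ and the $\delta$-ratios telescope.

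The more substantial problem is that the two points carrying the real content are left as placeholders. For separation you invoke ``a rigidity result for such extremal maps'' without identifying or proving it; promoting the existence of an equivariant map with Lipschitz constant exactly $\delta(\rho_1)/\delta(\rho_2)$ to an isometric conjugacy is precisely the nontrivial part of the lemma (already in the all-cusps case, where every critical exponent is $1$, the implication $C=1\Rightarrow[\rho_1]=[\rho_2]$ is Thurston's theorem and rests on finite area and a stretch-locus type analysis, not on Arzel\`a--Ascoli plus a boundary-map argument; in higher dimensions the analogous statement can fail, which is exactly why the general version is only a pseudo-metric). Likewise, for continuity of $C$ both semicontinuity directions require uniform control of equivariant Lipschitz maps in the cusps and funnels: transporting a near-optimal map through the flat-bundle machinery only controls it on compacta, and the extension with controlled Lipschitz constant near the peripherals is exactly the kind of work this paper itself must do in Section 5.4 (the construction of the maps $g_{j,\rho}$ via the equivariant Kirszbraun--Valentine theorem of \cite{GK}); continuity of $\delta$ on $\mathcal{T}_{\mathbf{c}}(\Gamma)$ also deserves a precise reference rather than ``standard''. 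In short, the skeleton is the right one, but the normalization issue should be fixed and the separation and continuity steps are genuine gaps; if the goal is only to justify quoting the result, the honest route is simply to cite \cite{GK}.
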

\begin{remark}
The correction factor $\delta(\rho_1)/\delta(\rho_2)$ is needed for non-negativity. For a general hyperbolic $n$-manifold, continuity may fail and the generalization may be just an asymmetric pseudo-metric ($d(x,y)$ need not imply $x=y$).
\end{remark}
This metric allows us to control translation lengths nicely. Consider a left open ball around $\rho_2$: $$B_{Th}(\rho_2,C)= \{[\rho_1]\in\mathcal{T}_{\mathbf{c}}(\Gamma): d_{Th}(\rho_1,\rho_2)<C\}.$$ 
If $\rho_1\in B_{Th}(\rho_2,C)$, there is a $(\rho_1,\rho_2)$-equivariant Lipschitz $g$ map with Lipschitz constant $<e^C$. Therefore, for every $\gamma\in \Gamma$, $$d(\rho_2(\gamma)g(z),g(z)) = d(g(\rho_1(\gamma)z), g(z)) < e^C d(\rho_1(\gamma)z,z),$$ and it follows that $\ell(\rho_2(\gamma))< e^C\ell(\rho_1(\gamma))$.

\end{subsection}

\begin{subsection}{Proof of Theorem B: bi-continuity}
The inverse mapping of $\Psi^\theta$, on the Teichm{\"u}ller side, takes as input a class of an almost strictly dominating pair $(\rho_1,\rho_2)$ and returns the unique minimizer of $\E_{\rho_1,\rho_2}^\theta$. We can follow the same approach as in \cite[subsection 2.4]{T} here, adapted to our infinite energy setting. 

Let $X,Y$ be metric spaces and $(F_y)_{y\in Y}$ a family of continuous
functions $F_y: X\to \mathbb{R}$ depending continuously on $y$ in the compact-open topology. $(F_y)_{y\in Y}$ is said to be uniformly proper if for any $C\in\mathbb{R}$, there exists a compact subset $C\subset X$ such that for all $y\in Y$ and $x\not \in K$, we have $F_y(x) > C$. We say that the family $(F_y)_{y\in Y}$ is locally uniformly proper if for all $y_0 \in Y$, there is a
neighbourhood $U$ of $y_0$ such that $(F_y)_{y\in U}\subset (F_y)_{y\in Y}$ is uniformly proper.
\begin{lem}[Proposition 2.6 in \cite{T}]\label{unifproper}
Let $X$ and $Y$ be two metric spaces and $(F_y)_{y\in Y}$ a locally uniformly proper family of continous functions from $X$ to $\mathbb{R}$ depending continuously on $Y$ (for the compact open topology). Assume that each $F_y$ achieves its minimum at a unique point $x_m(y)\in X$. Then the function $$y\mapsto x_m(y)$$ is continuous.
\end{lem}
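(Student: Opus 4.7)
The plan is a standard compactness--plus--uniqueness argument: fix $y_0\in Y$ and an arbitrary sequence $y_n\to y_0$ in $Y$, and show $x_m(y_n)\to x_m(y_0)$ in $X$. The only inputs are local uniform properness, joint continuity (in the compact--open sense) of $(y,x)\mapsto F_y(x)$, and uniqueness of the minimizer.

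First I would produce an a priori bound on $F_{y_n}(x_m(y_n))$. Since $x_m(y_n)$ minimizes $F_{y_n}$, one has $F_{y_n}(x_m(y_n))\leq F_{y_n}(x_m(y_0))$. The continuous dependence of $F_y$ on $y$ in the compact--open topology, applied at the single point $x_m(y_0)$, gives $F_{y_n}(x_m(y_0))\to F_{y_0}(x_m(y_0))$, so there is a constant $C$ with $F_{y_n}(x_m(y_n))\leq C$ for all large $n$. Now invoke local uniform properness: shrinking to a neighbourhood $U$ of $y_0$ so that $(F_y)_{y\in U}$ is uniformly proper, there exists a compact $K\subset X$ with $\{x\in X:F_y(x)\leq C\}\subset K$ for every $y\in U$. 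For $n$ large, $y_n\in U$ and hence $x_m(y_n)\in K$.

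Next I would identify every subsequential limit. Let $x_\infty$ be the limit of some subsequence $x_m(y_{n_k})$ in the compact set $K$. Joint continuity (in the compact--open topology, continuous dependence on $y$ combined with continuity of each $F_y$ on the compact set $K$ gives uniform convergence on $K$, hence the evaluation at a convergent sequence passes to the limit) yields
\[
F_{y_{n_k}}(x_m(y_{n_k}))\longrightarrow F_{y_0}(x_\infty).
\]
For any $x\in X$, the minimizing property gives $F_{y_{n_k}}(x_m(y_{n_k}))\leq F_{y_{n_k}}(x)$, and passing to the limit (using continuity at the single point $x$) yields $F_{y_0}(x_\infty)\leq F_{y_0}(x)$. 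Hence $x_\infty$ minimizes $F_{y_0}$, and by the uniqueness assumption $x_\infty=x_m(y_0)$.

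Finally, since $(x_m(y_n))$ eventually lies in the compact $K$ and every subsequential limit equals $x_m(y_0)$, the whole sequence converges to $x_m(y_0)$. This gives continuity of $y\mapsto x_m(y)$ at $y_0$, and as $y_0$ was arbitrary the proof is complete. The only mildly delicate point is the passage to the limit $F_{y_{n_k}}(x_m(y_{n_k}))\to F_{y_0}(x_\infty)$, where one needs the compact--open hypothesis (rather than mere pointwise continuity in $y$) to handle the simultaneous movement of both arguments; everything else is a clean compactness argument.
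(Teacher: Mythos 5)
Your argument is correct, and it is the standard compactness-plus-uniqueness proof: a priori bound on the minimal value via evaluation at $x_m(y_0)$, confinement of the minimizers to a compact set by local uniform properness, identification of every subsequential limit with $x_m(y_0)$ via uniform convergence on compacta and uniqueness of the minimizer. The paper itself does not reprove this lemma but cites Tholozan's Proposition 2.6, whose proof proceeds along essentially the same lines, so there is nothing to add.
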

We verify the conditions for $\E_{\rho_1,\rho_2}^\theta$, with $(\rho_1,\rho_2)$ living in the space of almost strictly dominating representations. For the remainder of this subsection, we are working over local sections for our bundles of local systems over representation spaces.
\begin{lem}
The association $(\rho_1,\rho_2)\mapsto \E_{\rho_1,\rho_2}^\theta$ is continuous for the compact-open topology.
\end{lem}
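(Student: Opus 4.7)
The plan is to show that for any convergent sequence $(\rho_{1,n},\rho_{2,n})\to(\rho_1,\rho_2)$ of valid pairs (satisfying $\ell(\rho_{1,n}(\zeta_i))=\ell(\rho_{2,n}(\zeta_i))$ for each peripheral) and any compact $K\subset\mathcal{T}(\Gamma)$, the functionals $\E_n:=\E_{\rho_{1,n},\rho_{2,n}}^\theta$ converge uniformly to $\E_\infty:=\E_{\rho_1,\rho_2}^\theta$ on $K$. The basic strategy is the standard split using the cusp coordinates of Section 3.1: for a cutoff $R\geq 2$, decompose
\begin{equation*}
\E_n(\mu) = \int_{\Sigma_R}\bigl[e(\mu,h_{\mu,n}^\theta) - e(\mu,f_{\mu,n}^\theta)\bigr]\,dA_\mu + \int_{\Sigma\setminus\Sigma_R}\bigl[e(\mu,h_{\mu,n}^\theta) - e(\mu,f_{\mu,n}^\theta)\bigr]\,dA_\mu,
\end{equation*}
and handle each piece separately.

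On the compact piece $\Sigma_R$, I will combine Lemma \ref{varyrep} with Lemmas \ref{vary} and \ref{compactvar1} to obtain $C^\infty$-convergence on compacta of the harmonic maps, jointly in $(\mu,\rho)$ ranging over compact sets. Consequently, the integrand converges uniformly in $\mu\in K$, and since $\Sigma_R$ has uniformly bounded area, the first integral converges uniformly on $K$. The only small wrinkle is that $\Sigma_R$ itself depends on $\mu$, but the cusp thickness $\tau_\mu$ varies continuously with $\mu$, so this causes no trouble on a compact set.

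On the cusp piece, I will invoke the $\theta$-adapted version of Proposition \ref{asym} from Section 3.9: both energy densities take the form $|\Lambda(\theta)|\ell^2/(2\tau_\mu^2) + O(e^{-cy})$ at the cusp, where $\ell$ is the common translation length $\ell(\rho_{k,n}(\zeta))$. Because the leading terms coincide and cancel in the difference, the integrand decays like $Ce^{-cy}$, so the tail contribution is bounded by $(\tau_\mu C/c)e^{-cR}$, which tends to $0$ as $R\to\infty$.

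The main obstacle is making the constants $C$ and $c$ in the cusp estimate uniform as $(\mu,\rho_{1,n},\rho_{2,n})$ range over compact sets. Unpacking the proof of Proposition \ref{asym}, these constants are controlled by (i) the Hopf-differential residue $\ell^2/(4\tau_\mu^2)$, which depends continuously on $(\mu,\rho)$, and (ii) the Fuchsian comparison estimate $|\psi|^2\geq 1-O(e^{-cy})$ from \cite[Proposition 3.13]{S}, which requires only continuity of the Fuchsian comparison harmonic map—this is precisely the content of Lemma \ref{varyrep} applied to the comparison representation. Both inputs being continuous in $(\mu,\rho)$ gives the required uniformity on compact sets. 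With uniform cusp decay in hand, a standard $\epsilon/3$ argument—choosing $R$ large so the cusp tails contribute $<\epsilon/3$ for all $n$ and $\mu\in K$, then $N$ large so the compact-part integrals agree within $\epsilon/3$—yields $|\E_n(\mu)-\E_\infty(\mu)|<\epsilon$ for all $n\geq N$ and $\mu\in K$, completing the argument.
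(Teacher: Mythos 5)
Your overall architecture is the same as the paper's: split the integral at a cusp cutoff, use convergence of the harmonic maps on the compact part, and use a uniform exponential-decay estimate of the energy densities in the cusp (where the leading terms $\Lambda(\theta)\ell^2/2\tau_\mu^2$ cancel because the boundary lengths agree), then run an $\epsilon$-argument. The gap is in the step you describe as the "main obstacle" and then dispatch in one sentence: the claim that the constants $C,c$ (and the height $y_0$) in Proposition \ref{asym} are uniform because "(i) the residue depends continuously on $(\mu,\rho)$" and "(ii) the Fuchsian comparison estimate requires only continuity of the comparison harmonic map." Continuity of harmonic maps in the $C^\infty$ topology on compacta (Lemma \ref{varyrep} and the Section 3 results) gives no control whatsoever deep in the cusp, which is exactly where the estimate lives: a priori the constants and the threshold $y_0$ could degenerate as $(\mu,\rho_{1,n},\rho_{2,n})$ vary, even though the maps converge on every fixed compact set. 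Also, the error term in $|\Phi|=\ell^2/4\tau_\mu^2+O(e^{-cy})$ is governed not just by the residue but by the order-one Laurent coefficient and the regular part, whose uniform boundedness itself has to be extracted from the convergence of Hopf differentials.

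This is precisely why the paper isolates Lemmas \ref{varcusp1} and \ref{varcusp2} and proves them in Section 5.5 by a genuinely different mechanism: uniform bounds on the Laurent coefficients; a compactness/contradiction argument along conformal cylinders $i_n:\mathcal{C}\to\Sigma_{r_n}\setminus\Sigma_{r_n-1}$ exiting the cusp, showing any limit is a constant-speed geodesic parametrization composed with the fractional Dehn twist, hence $|\psi|$ is uniformly pinched away from $0$ and $\infty$ independently of $(\mu,\rho)$; and then the Bochner identity $\Delta_{\mu^f}\log|\psi|^{-1}=2|\Phi|\sinh\log|\psi|^{-1}$ together with the maximum principle to upgrade the uniform pinching to uniform exponential decay, with the non-Fuchsian case reduced to the Fuchsian one via \cite[Proposition 3.13]{S}. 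Without an argument of this kind (or an explicit, quantitative version of Wolf's estimate with constants expressed in continuously varying data), your choice of $R$ in the $\epsilon/3$ scheme cannot be made uniformly in $n$ and $\mu\in K$, so the proof as written does not close. The rest of your argument (the compact part, the cancellation of leading terms, the handling of the non-hyperbolic cusps via an integrable dominating function) matches the paper.
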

In the proof, we require control on the energy of the harmonic maps at the cusp, as we vary the source metric and the representation. We defer the proof of the lemmas below to the next subsection. In these lemmas, let $\rho_0$ be a reductive representation and $\mu_0$ a hyperbolic metric.
\begin{lem}\label{varcusp1}
Suppose $\rho_0$ has parabolic monodromy. Then for every representation $\rho$ in the same representation space that is close enough to $\rho_0$, and metric $\mu$ close to $\mu_0$, there is a function $\tilde{e}$ that is integrable in the flat metric and such that $$\sqrt{|\mu|}e(\mu,f_\mu^\rho)\leq \tilde{e}$$ everywhere.
\end{lem}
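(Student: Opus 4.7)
The plan is to produce an integrable majorant $\tilde{e}$ by splitting $\Sigma$ into the compact core $\Sigma_2$ and the cusp neighbourhood $U$ and handling each piece separately. On the core, continuity of harmonic maps in $(\mu, \rho)$ gives uniform bounds directly. On the cusp, parabolic monodromy forces fast decay of the energy density, and the decay can be made uniform in a small neighbourhood of $(\mu_0, \rho_0)$.

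First, on $\Sigma_2$: because $\rho_0(\zeta)$ is parabolic, the harmonic map $f_\mu^\rho$ has finite energy for every nearby $(\mu, \rho)$ in the same relative representation space. Lemma \ref{compactvar1}, applied jointly in $(\mu, \rho)$ in the spirit of Lemma \ref{varyrep}, gives $C^\infty$ convergence of $f_\mu^\rho$ on lifts of $\Sigma_2$ as $(\mu, \rho) \to (\mu_0, \rho_0)$. The total-energy bound needed to run this argument comes from Proposition \ref{fin} applied to a fixed finite-energy comparison map. It follows that $\sqrt{|\mu|}\,e(\mu, f_\mu^\rho) = \tfrac{1}{2}(e_{11}(f_\mu^\rho) + e_{22}(f_\mu^\rho))$ is uniformly bounded by some constant $C_0$ on $\Sigma_2$ for $(\mu, \rho)$ in a small neighbourhood of $(\mu_0, \rho_0)$.

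On the cusp $U$, parametrized as $[0, \tau_\mu] \times [y_0, \infty)/\langle z \mapsto z + \tau_\mu\rangle$ with hyperbolic metric $y^{-2}|dz|^2$, the parabolic monodromy hypothesis implies that the Hopf differential has at most a simple pole at the puncture. Consequently $|\Phi|$ in the strip coordinate decays exponentially in $y$ (see the Laurent expansion at the beginning of Section 3.2). Combining this with the Bochner-type subharmonicity of $e(\mu, f_\mu^\rho)$ in a CAT($-1$) target and the uniform finite-energy bound, a standard Moser / mean-value iteration in the cusp (as in \cite{Lo} and \cite{W}) yields a pointwise decay $e(\mu, f_\mu^\rho)(x, y) \leq C y^{-\alpha}$ for some $C, \alpha > 0$. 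Hence $\sqrt{|\mu|}\,e(\mu, f_\mu^\rho) \leq C y^{-2-\alpha}$ on $U$, which is integrable in the flat area form $dx\,dy$. Setting $\tilde{e}$ to equal $C_0$ on $\Sigma_2$ and $C y^{-2-\alpha}$ on $U$, with a smooth interpolation near $\partial \Sigma_2$, gives the required majorant.

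The main obstacle is verifying that the constants $C, \alpha$ in the cusp decay estimate can be taken uniform over a neighbourhood of $(\mu_0, \rho_0)$. This reduces to checking that the implicit constants in the Moser-type argument depend only on quantities that vary continuously in $(\mu, \rho)$: the cusp width $\tau_\mu$, the uniform upper bound on $|\Phi|$ on the compact annulus $\{y_0 \leq y \leq y_0 + 1\}$ (which follows from $C^\infty$ convergence there), the total energy of $f_\mu^\rho$, and the sectional curvature bound for $X$. Shrinking the neighbourhood to stabilize each of these quantities then produces uniform $C, \alpha$ and closes the argument.
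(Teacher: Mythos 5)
Your core/cusp splitting and the treatment of the compact part via continuity of $(\mu,\rho)\mapsto f_\mu^\rho$ are fine, but the cusp estimate has a genuine gap. The claimed pointwise decay $e(\mu,f_\mu^\rho)(x,y)\le Cy^{-\alpha}$ of the \emph{hyperbolic} energy density is false in general: the lemma must apply, e.g., to Fuchsian representations with parabolic monodromy (it is used for the maps $h_\mu^{i_n}$ with $i_n\to\rho_1$ Fuchsian in the finite-energy case of Section 5.4), and there the harmonic map is asymptotically an isometry of cusp regions, so $e(\mu,f_\mu^\rho)\to 1$ and $\sqrt{|\mu|}\,e(\mu,f_\mu^\rho)$ decays exactly like $y^{-2}$, no faster. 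More importantly, the mechanism you propose cannot even deliver the weaker bound that is actually needed, namely $e(\mu,f_\mu^\rho)\le C$ uniformly near $(\mu_0,\rho_0)$: a first-order pole of $\Phi$ controls only the product $H\cdot L=\mu^{-2}|\Phi|^2$, not the sum $e=H+L$, so exponential decay of $|\Phi|$ in the strip coordinate says nothing about $H$ staying bounded; and subharmonicity of the flat energy density plus a uniform total energy bound gives, via the mean value inequality, only $\sqrt{|\mu|}\,e(\mu,f_\mu^\rho)(x,y)\le C\,\epsilon_{\mu,\rho}(y)$ where $\epsilon_{\mu,\rho}(y)$ is the energy in a unit band, which tends to $0$ with no rate and depends on $(\mu,\rho)$. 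Uniformly in $(\mu,\rho)$ you only get a constant bound on the flat density, and a constant is not integrable on the infinite flat cylinder; so no single integrable majorant $\tilde e$ comes out of these ingredients. This quantitative, uniform decay is precisely the missing step, and it is not a matter of bookkeeping constants as your last paragraph suggests.

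For comparison, the paper closes exactly this point differently: in the cusp one has $|\psi|=L^{1/2}/H^{1/2}\le |\Phi|/\mu\to 0$, uniformly because the Laurent coefficients of $\Phi$ are uniformly controlled near $(\mu_0,\rho_0)$; hence for Fuchsian $\rho$ the harmonic map is uniformly quasiconformal, and the Schwarz lemma for quasiconformal harmonic maps \cite{GH} gives $H\le 2K^2$, so $e=H+L\le 4K^2$. For general reductive $\rho$ one bounds $H(f),L(f)$ by $H(h)$ for the Fuchsian harmonic map $h$ with the same Hopf differential (\cite[Proposition 3.13]{S}). The uniform constant bound on the hyperbolic density then yields $\sqrt{|\mu|}\,e(\mu,f_\mu^\rho)\le Cy^{-2}$, which is the integrable majorant. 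If you wish to keep your Bochner/Moser framework, you would still need an independent argument bounding $H$ (not just $HL$) uniformly near the puncture; as written, that step is absent.
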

\begin{lem}\label{varcusp2}
Suppose $\rho_0$ has hyperbolic monodromy. Then for every representation $\rho$ in the same representation variety that is close enough to $\rho_0$, and metric $\mu$ close to $\mu_0$, working in the cusp coordinates for $\mu$ there is a $y_0>0$, $C,c>0$ such that for all $y\geq y_0$, $$\frac{\Lambda(\theta)\ell^2}{2\tau_\mu^2}- Ce^{-cy}\leq \sqrt{\mu}e(\mu,f_\mu^\rho)\leq \frac{\Lambda(\theta)\ell^2}{2\tau_\mu^2}+ Ce^{-cy},$$ where $f_\mu^\rho$ is the harmonic map with twist parameter $\theta$.
\end{lem}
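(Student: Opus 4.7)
The plan is to parametrize the proof of Proposition~\ref{asym}, verifying that each step gives estimates uniform in $(\rho,\mu)$ in a fixed neighbourhood of $(\rho_0,\mu_0)$. In the cusp coordinate $z=x+iy$, the hyperbolic metric is $\mu(z)|dz|^2$ with $\mu(z)=y^{-2}$, so $\sqrt{|\mu|}\, e(\mu,f_\mu^\rho) = e(\mu^f, f_\mu^\rho)$, where $\mu^f$ is the flat-cylinder metric. The general twist parameter is handled as in Section~\ref{3.9}, so the core task is to upgrade the asymptotic estimates (\ref{4}) and (\ref{5}) to their uniform versions and then combine them via $e = \|\Phi\|(|\psi|^{-1}+|\psi|)$.

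For the uniform expansion of $\|\Phi(f_\mu^\rho)\|$, I would first extend Lemma~\ref{varyrep} and Lemma~\ref{vary} to joint continuity of $(\rho,\mu)\mapsto f_\mu^\rho$ in the $C^\infty$ topology on compacta; this is a routine diagonal argument combining the two existing proofs (the approximation maps $\varphi_\mu^\rho$ can be chosen to depend continuously on $(\rho,\mu)$ simultaneously). Hence the Hopf differentials $\Phi(f_\mu^\rho)$ depend continuously in the $C^\infty$ sense on compacta. In the punctured-disk coordinate $w=e^{2\pi i z/\tau_\mu}$, Theorem~\ref{S} yields $\phi_\mu^\rho(w) = a_{-2}(\rho)w^{-2} + a_{-1}(\rho,\mu)w^{-1} + \eta_\mu^\rho(w)$, where $a_{-2}(\rho) = -\Lambda(\theta)\ell(\rho(\zeta))^2/16\pi^2$ varies continuously in $\rho$ and $\eta_\mu^\rho$ is holomorphic. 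Uniform $C^0$-control of $\eta_\mu^\rho$ on a fixed small disk around $w=0$, obtained from $C^\infty$-control on a slightly larger compactum via Cauchy's integral formula, together with the change of coordinates (\ref{ivreallylosttrack}), gives the uniform expansion
\[
\|\Phi(f_\mu^\rho)\| = \frac{|\Lambda(\theta)|\,\ell(\rho(\zeta))^2}{4\tau_\mu^2} + O(e^{-2\pi y/\tau_\mu}).
\]

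For the uniform estimate on $|\psi(f_\mu^\rho)|^2$, let $h_\mu^\rho$ be the Fuchsian harmonic map with the same Hopf differential and twist parameter as $f_\mu^\rho$, furnished by Theorem~\ref{S}. Wolf's estimate \cite[p.~513]{W} gives $1-|\psi(h_\mu^\rho)|^2 = O(e^{-cy})$ with $c>0$ determined by $\ell(\rho(\zeta))$ and $\tau_\mu$. To make the implicit constant uniform, I would construct sub- and supersolutions of the form $1 - A e^{-cy}$ for the Bochner-type elliptic equation governing $|\psi(h_\mu^\rho)|^2$ in the cusp, with $A$ and $c$ uniform in a neighbourhood of $(\rho_0,\mu_0)$. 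This is feasible because $h_\mu^\rho$ stays at a $C^\infty$-bounded distance from its approximation map $\varphi_\mu^\rho$, uniformly in $(\rho,\mu)$, by Lemma~\ref{approxcon} and the extension to varying $\rho$ sketched above. The pointwise comparison $|\psi(h_\mu^\rho)| \leq |\psi(f_\mu^\rho)| \leq |\psi(h_\mu^\rho)|^{-1}$ from \cite[Proposition~3.13]{S} then propagates the uniform bound to $f_\mu^\rho$, and combining with the estimate on $\|\Phi\|$ yields the lemma.

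The main obstacle I anticipate is the uniformity of Wolf's estimate for $h_\mu^\rho$: while the pointwise bound is standard at a fixed representation, ensuring that the decay rate $c$ and prefactor can be chosen uniformly requires reworking the barrier argument rather than quoting the original proof as a black box. The approximation maps $\varphi_\mu^\rho$ of Section~3.1 are the natural templates for these barriers, and the continuous variation of both $\varphi_\mu^\rho$ and $\Phi(f_\mu^\rho)$ in $(\rho,\mu)$ is the key input making this uniformization feasible.
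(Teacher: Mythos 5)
Your proposal is correct in outline and follows the same overall strategy as the paper: uniformize Proposition~\ref{asym} by establishing joint $C^\infty$ continuity of $(\rho,\mu)\mapsto f_\mu^\rho$ on compacta, hence uniform control of the Laurent coefficients of the Hopf differential (the paper's estimate (\ref{14})), and then upgrade the $|\psi|$ asymptotic via the Bochner identity and a maximum-principle barrier, transferring from the Fuchsian case to general $\rho$ through $|\psi(h)|\le|\psi(f)|\le|\psi(h)|^{-1}$. Where you genuinely diverge is the source of the a priori control on $|\psi|$ that feeds the barrier argument: the paper first proves uniform, non-decaying two-sided bounds on $|\psi(f_\mu^\rho)|$ by a compactness/blow-up argument along conformal cylinders exiting the cusp, showing any sublimit of $f_\mu^\rho\circ i_n$ is a constant-speed geodesic projection composed with a fractional Dehn twist (so the limiting $|\psi|$ is $1$), and only then runs the maximum-principle upgrade; you instead propose to rework Wolf's barrier argument from \cite{W} directly with uniform constants, sourcing the a priori bounds from uniform distance and energy control relative to the approximation maps (a uniform-in-$\rho$ version of Lemma~\ref{hypun}). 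Your route is viable --- note that a uniform upper bound on the energy density of $h_\mu^\rho$ in the cusp together with the identity $HL=\|\Phi\|^2$ already yields the uniform positive lower bound on $|\psi(h_\mu^\rho)|$ needed to start the barrier at a fixed horocycle --- but be aware that $h_\mu^\rho$ is the Fuchsian map with the same Hopf differential, whose target Fuchsian representation varies with $(\rho,\mu)$ via \cite[Theorem 1.4]{S}; so the claimed ``uniform $C^\infty$-bounded distance to its approximation map'' requires continuity of that dual Fuchsian family (available from the results invoked in Section 5.2), and does not follow from Lemma~\ref{approxcon} and the varying-$\rho$ extension for $f_\mu^\rho$ alone. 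What the paper's blow-up step buys is precisely this: uniform bounds on $|\psi|$ obtained directly for $f_\mu^\rho$, with no need to control the auxiliary Fuchsian family, at the cost of an extra compactness argument; your route is somewhat shorter once that control is granted.
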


\begin{proof}
We want to show that if $(i_n,j_n)\to (\rho_1,\rho_2)$ in $\textrm{Rep}_{\mathbf{c}}(\Gamma,\PSL(2,\mathbb{R})\times G)$ and $K\subset \mathcal{T}(\Gamma)$ is compact, then $\E_n :=\E_{i_n,j_n}^\theta\to \E=\E_{\rho_1,\rho_2}^\theta$ uniformly on $K$ as $n\to \infty$. The finite energy case is easy: from our previous results, if $\mu_n\to \mu$ and $\rho_n\to \rho$, then the energy densities of the harmonic maps converge pointwise to $e(\mu,f_\mu^\rho)$ (recall this is independent of the harmonic map if $\rho$ is reducible). Lemma \ref{varcusp1} then justifies an application of the domination convergence theorem, so that the total energies converge.

Going forward, we assume the monodromoy is hyperbolic. Fixing a metric $\mu$, set $h_n=h_\mu^{i_n}$, $f_n=f_\mu^{j_n}$, $h=h_\mu^{\rho_1}$, $f=f_\mu^{\rho_2}$. It suffices to show $\E_n(\mu)\to \E(\mu)$, and that the rate only depends on the Teichm{\"u}ller distance from $\mu$ to a base metric $\mu_0$. By Lemma \ref{varcusp2}, for $r>y_0$,
\begin{align*}
    |\E(\mu) - \E_n(\mu)| &= \Big |\int_{\Sigma_r} (e(\mu,h) - e(\mu,f)) - (e(\mu,h_n)-e(\mu,f_n)) dA_\mu\Big |  \\
    &+ \Big|\int_{\Sigma\backslash\Sigma_r}(e(\mu,h) - e(\mu,f)) - (e(\mu,h_n)-e(\mu,f_n)) dA_\mu \Big |\\
    &\leq \Big |\int_{\Sigma_r} (e(\mu,h) - e(\mu,f)) - (e(\mu,h_n)-e(\mu,f_n)) dA_\mu\Big | + \int_{\Sigma\backslash\Sigma_r} Ce^{-cy} dA_\mu
\end{align*}
holds for every $\mu$ close enough to $\mu_0$. Fixing $\epsilon>0$, for every $r\geq y_0$ we can find $N_r> 0$, depending only on Teichm{\"u}ller distance to $\mu_0$, such that for all $n\geq N_r$, the first integral is $<\epsilon/2$. Hence, for such $n$, $$|\E(\mu) -\E_n(\mu)|<\epsilon/2 + \frac{2\pi C}{c}e^{-cr}.$$ Taking $r=c^{-1}\log(4\pi C\epsilon^{-1})$, we get $|\E(\mu) -\E_n(\mu)|<\epsilon$.
\end{proof}
We now show that the functionals $\E_{\rho_1,\rho_2}^\theta$ are locally uniformly proper. From here we assume $\theta=0$, because the proof is identical for every $\theta$. We essentially show that the bounds from the proof of Proposition \ref{proper} depend continuously on $([\rho_1],[\rho_2])$. For $([\rho_1],[\rho_2])\in \textrm{ASD}_{\mathbf{c}}(\Gamma,G)$, we choose an open neighbourhood $U\subset \textrm{ASD}_{\mathbf{c}}(\Gamma,G)$ containing $([\rho_1],[\rho_2])$ with compact closure. We intersect it with a product open set $U_1\times U_2$, where $U_1$ is a left $d_{Th}$-open ball around $\rho_1$. We then lift via some section to the space of local sections to view these points as representations. Picking a boundary geodesic or a horocycle for $C(\mathbb{H}/\rho_1(\Gamma))$, as we perturb the representations we get a continuously varying family of such curves in the new metric. We write $C_d^j(\xi)$ for the collar neighbourhood of such a curve in $\mathbb{H}/j(\Gamma)$, $j\in U_2$. By choosing $U$ even small enough, we can assume we have a fixed presentation for our fundamental group, and the collar neighbourhood $C_{d_j}^j(\xi)$ has uniform upper and lower bounds $\delta_1 \leq d_j\leq \delta_2$. 

Set $\delta=(\delta_1+\delta_2)/2$. We can choose a neighbourhood $C_\delta$ containing every $C_{\delta}^j(\xi)$ for all $j\in U_2$. For a $(j,\rho)$-equivariant map $g$, we put $$\textrm{Lip}_\delta(g) = \max_{x\in C(\mathbb{H}/j(\Gamma))\backslash C_\delta} \textrm{Lip}_x(g).$$
\begin{lem}
Shrinking $U$ if necessary, there exists an $\epsilon>0$ such that for every $(j,\rho)\in U$, there is a $(j,\rho)$-equivariant map $g_{j,\rho}$ that satisfies $\textrm{Lip}(g_{j,\rho})\leq 1$ and $\textrm{Lip}_\delta(g_{j,\rho})<(1-\epsilon)^{1/2}$. Moreover, we can choose it so that if $\rho(\zeta)$ is hyperbolic, then $g_{j,\rho}$ translates the geodesic axis of $j(\zeta)$ along the axis of $\rho(\zeta)$ with constant speed $1$.
\end{lem}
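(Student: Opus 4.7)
The strategy is to perturb the optimal map $g=g_{\rho_1,\rho_2}$ furnished by almost strict domination for the base pair $(\rho_1,\rho_2)$. By Definition \ref{asd}, $g$ is globally $1$-Lipschitz on $\tilde{C}(\Sigma,\mu_0)$, strictly contracting in the interior, and, when $\rho_1(\zeta)$ is hyperbolic, isometric with constant speed on the boundary axis. Upper semicontinuity of $x\mapsto\textrm{Lip}_x(g)$ together with compactness of the quotient of $\tilde{C}(\Sigma,\mu_0)\setminus C_\delta$ by $\rho_1(\Gamma)$ yields some $\lambda_0<1$ with $\textrm{Lip}_\delta(g)\leq\lambda_0$. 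Fix $\epsilon\in(0,1-\lambda_0^2)$, so that $\lambda_0<(1-\epsilon)^{1/2}$; this will be the target interior gap.

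I would build $g_{j,\rho}$ in two stages. First, for $j\in U_1\subset B_{Th}(\rho_1,C)$, the Thurston metric provides a $(j,\rho_1)$-equivariant diffeomorphism $\phi_j:\mathbb{H}\to\mathbb{H}$ of Lipschitz constant at most $1+\eta_j$ with $\eta_j\to 0$ as $j\to\rho_1$; using the optimal Lipschitz theory of \cite{GK} one may further arrange $\phi_j$ to restrict to the isometric, constant-speed parametrization sending the axis of $j(\zeta)$ onto the axis of $\rho_1(\zeta)$, the translation lengths agreeing because $j,\rho_1\in\mathcal{T}_{\mathbf{c}}(\Gamma)$. Second, for $\rho\in U_2$ near $\rho_2$, transport $g$ across the continuously varying family of flat $X$-bundles exactly as in Lemma \ref{varyrep}: realize $g$ as a section of the flat bundle for $\rho_2$ in a local section of $\chi_{\mathbf{c}}(\Gamma,G)\to\textrm{Rep}_{\mathbf{c}}(\Gamma,G)$ and transport to a $(\rho_1,\rho)$-equivariant map $g_\rho:\tilde{C}(\Sigma,\mu_0)\to X$ satisfying $g_\rho\to g$ in $C^0_{\textrm{loc}}$, then adjust on each hyperbolic peripheral axis so that its restriction there is the isometric constant-speed parametrization onto the axis of $\rho(\zeta)$, which shares translation length with $\rho_2(\zeta)$ in the fixed conjugacy class. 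Set $g_{j,\rho}:=g_\rho\circ\phi_j$; a direct check shows this is $(j,\rho)$-equivariant with the desired boundary behavior.

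For the interior estimate, by shrinking $U$ the $C^0_{\textrm{loc}}$-convergence $g_\rho\to g$ upgrades to $C^1_{\textrm{loc}}$-convergence on the bulk (since away from the boundary the maps are smooth and the transport of flat data varies smoothly), so $\textrm{Lip}_\delta(g_\rho)\leq\lambda_0+o(1)$; combined with $\textrm{Lip}(\phi_j)\leq 1+\eta_j$ this gives $\textrm{Lip}_\delta(g_{j,\rho})\leq(\lambda_0+o(1))(1+\eta_j)<(1-\epsilon)^{1/2}$ after further shrinking. The principal obstacle is the global bound $\textrm{Lip}(g_{j,\rho})\leq 1$, since the naive composition gives only Lipschitz constant $1+O(\eta)$. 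The remedy I propose is a Kirszbraun-style $1$-Lipschitz equivariant extension in the $\CAT(-1)$ target (adapted from \cite{GK}): inside a thin collar of each hyperbolic peripheral curve, strictly thinner than $C_\delta$, replace $g_{j,\rho}$ by the $1$-Lipschitz $(j,\rho)$-equivariant extension of the axis isometry agreeing with $g_{j,\rho}$ on the inner boundary of the collar, post-composing the inner-boundary values with a small geodesic contraction toward the axis if necessary. Because the modification is supported away from $\tilde{C}(\Sigma,\mu_0)\setminus C_\delta$, the interior Lipschitz estimate is preserved while the global Lipschitz constant drops to $1$, and the moreover clause holds by construction.
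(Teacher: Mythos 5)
Your overall strategy---perturb in $j$ using the left $d_{Th}$-ball, perturb in $\rho$ by transporting the optimal map through the flat-connection construction of Lemma \ref{varyrep}, obtain $\textrm{Lip}_\delta<(1-\epsilon)^{1/2}$ by compactness and semicontinuity, and then invoke an equivariant Kirszbraun--Valentine extension near the peripherals---is the same as the paper's. The gap is in your final step, which is where the real difficulty sits. You keep the composition $g_\rho\circ\phi_j$ unchanged on the annulus $C_\delta\setminus(\textrm{thin collar})$ and only replace it inside the thin collar. But the uniform gap $\lambda_0<1$ is available only outside $C_\delta$: by Definition \ref{asd} the local Lipschitz constants of $g$ merely stay below $1$ in the open interior and tend to $1$ at the boundary of the convex core, so on $C_\delta\setminus(\textrm{thin collar})$ the transported map is only (approximately) $1$-Lipschitz, and composing with the $(1+\eta_j)$-Lipschitz map $\phi_j$ can give Lipschitz constant $1+O(\eta_j)>1$ there. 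Hence ``the modification is supported away from $\tilde{C}\setminus C_\delta$'' does not yield $\textrm{Lip}(g_{j,\rho})\leq 1$; that annulus is exactly where the bound fails. In addition, your extension step prescribes mixed data (inner-boundary values together with the constant-speed axis isometry), and for Kirszbraun--Valentine to produce a $1$-Lipschitz map this partial map must itself be $1$-Lipschitz, in particular across pairs consisting of an inner-boundary point and a far-away axis point. Your proposed fix---a small geodesic contraction of the boundary values toward the axis---creates slack only in the transverse direction and does not control a longitudinal offset along the axis, which is precisely the borderline quantity: already at the base pair the cross inequality holds with no margin, since far-apart, nearly axis-parallel pairs realize the ratio $1$. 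So the hypothesis of the extension theorem is not verified.

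The paper avoids both problems structurally: it defines $g_{j,\rho}$ only on the complement of $C_\delta$, where the gap $(1-\epsilon)^{1/2}$ absorbs the $(1+\epsilon_0)$ factor coming from the $d_{Th}$-ball, and then extends over all of $C_\delta$ and the funnel/cusp at once by the equivariant Kirszbraun--Valentine theorem adapted to the cyclic stabilizer of $\zeta$ (see the remarks following the lemma), so no uncontrolled intermediate region survives. It also does not prescribe the axis values as extension data; the ``moreover'' clause is deduced a posteriori, since a globally $\leq 1$-Lipschitz $(j,\rho)$-equivariant map with $\ell(j(\zeta))=\ell(\rho(\zeta))$ is forced to carry the axis of $j(\zeta)$ onto the axis of $\rho(\zeta)$ with unit speed. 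Finally, drop the claim of $C^1_{\textrm{loc}}$ convergence of $g_\rho\to g$: the optimal map $g$ is only Lipschitz, not smooth. What is true, and suffices exactly as in the paper, is that the flat-connection transport alters the map locally by isometries close to the identity, so the local Lipschitz constants vary upper semicontinuously on the compact complement of the collar and cusp neighbourhoods.
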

\begin{proof}
We first define $g_{j,\rho}$ on the complement of $C_\delta$. One at a time, we vary $j$ and then $\rho$. By our choice of $U_1$, there is an $\epsilon_0>0$ such that for every $j\in U_1$, there is a $(j,\rho_1)$-equivariant $(1+\epsilon_0)$-Lipschitz map. Composing with our original optimal $(\rho_1,\rho_2)$-equivariant map, we get a $(j,\rho_2)$-equivariant map with nice control. Choosing $U_1$ small enough, we can shrink $\epsilon_0$ so as to ensure the right behaviour outside of $C_\delta$. 

Now we fix a base surface $\mathbb{H}/j(\Gamma)$ and vary $\rho$ around $\rho_2$. We can use flat connections as in Lemma \ref{varyrep}. For any continuous path of classes with initial point $\rho_2$, the procedure detailed there gives a path of equivariant maps starting at $g$. From compactness of the complement of the collar and cusp neighbourhoods, the local Lipschitz constants vary upper semicontinuously. In particular, we can achieve an upper bound $\textrm{Lip}_\delta(\cdot)<(1-\epsilon)^{1/2}$ when close enough to $([\rho_1],[\rho_2])$. 

Now we extend in $C_\delta$ and above. Note that while the local Lipschitz constants of $g_{j,\rho}$ are uniformly controlled, this can be strictly below the global Lipschitz constant, and in the hyperbolic case this global Lipschitz constant is exactly $1$. To see this, we do have $d_\nu(g_{j,\rho}(x),g_{j,\rho}(y))<d_\sigma(x,y)$ for every $x\neq y$, so $\textrm{Lip}(g_{j,\rho})\leq 1$ certainly. But in the case of hyperbolic monodromy, for any two points $x,y$ that are connected by a segment that mostly fellow-travels the geodesic axis of $j(\zeta)$, $$ d_\nu(g_{j,\rho}(x),g_{j,\rho}(y))= d_\sigma(x,y)+O(1),$$ where the implied constant depends only on the position of $x$ and $y$ in $\mathbb{H}/j(\Gamma)$. Since $x,y$ can be taken as far as we like, in the limit the ratio of distances becomes $1$. Using the equivariant Kirszbraun-Valentine theorem \cite[Proposition 3.9]{GK}, adapted to the stabilizer of the cusp or funnel, we extend each such equivariant map to a globally defined equivariant map with global Lipschitz constant $\leq 1$ in the parabolic and elliptic cases, and exactly $1$ in the hyperbolic case. The constraint $\ell(j(\zeta))=\ell(\rho(\zeta))$ forces $g_{j,\rho}$ to translate along the geodesic.
\end{proof}
\begin{remark}
\cite[Proposition 3.9]{GK} is only proved for equivariant maps from hyperbolic $n$-space to itself. However, a version still holds for maps from $(\mathbb{H},\sigma)$ to any $\textrm{CAT}(-1)$ metric space. The proof involves taking barycenters of Lipschitz maps, which can be done just the same in any $\textrm{CAT}(0)$ space, and a few applications of the Toponogov theorem that go through in a $\textrm{CAT}(-1)$ setting.
\end{remark}
\begin{remark}
In \cite[Proposition 3.9]{GK}, only Lipschitz constant at least $1$ is addressed. Using compactness of $C_{\delta}$ and adapting the proof using \cite[Proposition 3.7]{GK} rather than Propositions 3.1 and Remark 3.6, we acquire the result in this other context (with a potential loss on the Lipschitz constant).
\end{remark}

Returning to the main proof, we write $h$ to be a Fuchsian harmonic map, omitting dependence on the metric and representation. For any $(j,\rho)\in U$, the fact that $g_{j,\rho}$ translates along the geodesic means we can apply Lemma \ref{min}: $$\E_{j,\rho}(\mu) \geq \int_{\Sigma} e(\mu, h) - e(\mu, g_{j,\rho}\circ h) dA_\mu.$$ And using that $\textrm{Lip}(g)=1$, we get $$\E_{j,\rho}(\mu) \geq \int_{K} e(\mu, h) - e(\mu, g_{j,\rho}\circ h) dA_\mu$$ for any compact $K\subset \Sigma$.

Fix a simple closed curve $\gamma\in \Gamma$. We are positioned to repeat the initial computation in the proof of Theorem A, and doing so gives that for $([j],[\rho])\in U$ and $[\mu]\in\mathcal{T}(\Gamma)$ we have 
\begin{equation}\label{22}
    \E_{j,\rho}(\mu)\geq \frac{w_{\mu}}{2}\epsilon\min_x\Big (\int_{A_x^t}  \Big |\frac{\partial h(x,y)}{\partial y}\Big | dy \Big )^2,
\end{equation}
 where $A_x^t$ is defined as in Section 4.2, and $w_\mu$ is the $\mu$-length of the collar associated to $\gamma$. Repeating the proof of Lemma \ref{nocollar}, almost word for word, we can see 
$$\int_{A_{x}^t}  \Big |\frac{\partial h(x,y)}{\partial y}\Big | dy\geq \textrm{min}\{\delta_1/2,\kappa_\gamma\},$$ where $\kappa_\gamma$ is the minimum of the lengths for $j(\gamma)$. If the $\mu$-length of any $\gamma$ goes to $0$, then this integral explodes. Thus, for any $([j],[\rho])\in U$, and curve $\gamma$, there is a length $\epsilon_\gamma$ such that if $\ell_\sigma(\gamma)<\epsilon_\gamma$, the right-hand-side of (\ref{22}) is greater than $K$. This implies there is a compact subset of the moduli space such that if we take a fundamental domain $V$ for this subset in Teichm{\"u}ller space, then we have the $\E_{j,\rho} >K$ on the complement of the mapping class group orbit of $V$. 

To finish the proof, we show there are only finitely many mapping classes $[\psi]$ such that $\E_{j,\rho}\leq K$ on the translate $\psi^*V$. Suppose there exists a metric $\mu$ representing a point in $V$ and a sequence of distinct mapping classes $\psi_n$ such that $\E_{j,\rho}(\psi_n^*\mu) \leq K$ for all $([j],[\rho])\in U$. Then, by proper discontinuity of the mapping class group, $(\psi_n^{-1})^*j$ diverges in Teichm{\"u}ller space for every $j$. This implies that for each $j$ there exists a non-trivial simple closed curve $\gamma_j$ whose length under $(\psi_n^{-1})^*j$ blows up as $n\to \infty$ .

Since we intersected with a left open ball for $d_{Th}$, we can choose all $\gamma_j$ to be equal to a single curve $\gamma$.  If $C$ is the radius for our left open ball, then for all $n$ and $\gamma\in \Gamma$,
\begin{equation}\label{23}
    \ell((\psi_n^{-1})^*j(\gamma))\geq e^{-C}\ell(\psi_n^{-1})^*\rho_1(\gamma)).
\end{equation}
Thus, if $\gamma\in \Gamma$ is such that $\ell((\psi_n^{-1})^*\rho_1(\gamma))$ blows up, then by (\ref{23}), the same holds for every $j$ sufficiently close by. Moreover, the rate at which $\ell((\psi_n^{-1})^*j(\gamma))\to \infty$ is independent of $j$, close to that of $(\psi_n^{-1})^*\rho_1(\gamma)$. Now we have an integral estimate as in Theorem A: $$K\geq \E_{j,\rho}(\psi_n^*\mu)=\E_{(\psi_n^{-1})^*j ,(\psi_n^{-1})^*\rho}(\mu) \geq  \frac{w}{2}\epsilon\min_x\Big (\int_{A_x^t}  \Big |\frac{\partial h(x,y)}{\partial y}\Big | dy \Big )^2,$$ where $w$ is minimum of the lengths of the collars around $\gamma$ for $[\mu]\in V$. The proof of Lemma \ref{bigcurve} can then be made uniform: by examination of the proof, the integral $$\int_{A_x^t} \Big |\frac{\partial h(x,y)}{\partial y}\Big | dy$$ trails off to infinity with a rate depending on that of the translation length of the bad sequence. This is a contradiction, and thus the energy functional does have the $>K$ condition on the complement of a finite orbit. Therefore, we've satisfied Lemma \ref{unifproper}, and modulo Lemmas \ref{varcusp1} and \ref{varcusp2}, finished the proof of Theorem B.
\end{subsection}
\begin{subsection}{Variations at the cusp} Here we prove Lemmas \ref{varcusp1} and \ref{varcusp2}. Let $\mu$ be any metric close to $\mu_0$. Uniformizing a neighbourhood of the cusp to a punctured disk, we consider the Hopf differential as a meromorphic function for the complex structure of $\mu$ with a pole of order at most two: $$\phi(z) = -\frac{\Lambda(\theta)\ell^2}{16\pi^2}z^{-2} + a_\mu z_\mu^{-1} + \varphi_\mu(z),$$ where $z_\mu$ is a holomorphic coordinate for $\mu$, and $\ell$ is the translation length of the peripheral curve in question. If the representation does not have hyperbolic monodromy at the cusp, then it is understood that $\ell=0$ in the expression above. We can choose a neighbourhood of the puncture containing cusp neighbourhoods for all $\mu$ that uniformize for $\mu$ to an open set containing a punctured disk of $\mu$-radius uniformly bounded below.

It follows from the results of Section 3 and the proof of continuity in Theorem B that for any $(\mu_n,\rho_n)$ converging to $(\mu,\rho)$, the harmonic maps can be chosen, even in the reducible case, to converge in the $C^\infty$ sense on compacta. This implies the Hopf differentials, viewed simply as smooth rather than holomorphic functions, converge to that of $f_{\mu_0}^{\rho_0}$ locally uniformly on compacta in this punctured disk. If $z=z_{\mu_0}$, then after choosing our normalizations correctly, $z_{\mu}\to z$ as $\mu \to \mu_0$. It follows that the Laurent coefficients converge, and hence $a_\mu$ is bounded and $\varphi_\mu$ is $C^0$ bounded.

To prove Lemma \ref{varcusp1}, we first assume $\rho$ is Fuchsian. Then, in the coordinates on $\mathbb{D}^*$, the Beltrami form $\psi$ satisfies $$|\psi| = \frac{|\Phi|}{\mu H(\mu,f_\mu)}\leq  \frac{|\Phi|}{\mu}\leq \frac{Cz^{-1}}{|z|^{-2}(\log|z|)^2}=C|z|(\log|z|)^{-1}\to 0$$ as $z\to 0$. Via this decay on the Beltrami form, we know that once we go high enough into the cusp, $f$ there is a uniform bound on the quasiconformal dilatation (independent of $\mu$ and $\rho$). Thus from uniform convergence on compacta, we have a uniform $K$-quasiconformal bound everywhere. By the Schwarz lemma for quasiconformal harmonic maps \cite{GH}, we extract the bound $H(\mu,f_\mu)\leq 2K^2$. Since $L\leq H$ in the Fuchsian case, $e(\mu,f_\mu)\leq 4K^2$. If $\rho$ is not Fuchsian, then by Proposition 3.13 from \cite{S} we can bound the energy density above by that of the harmonic map for the Fuchsian representation with the same Hopf differential. This constant is integrable, and this proves Lemma \ref{varcusp1}.

Lemma \ref{varcusp2} is a bit more work. Passing to the cusp coordinates, the uniform bound on the Laurent coefficients implies there is a uniform $C,c>0$ such that $\phi$ satisfies 
\begin{equation}\label{14}
    \frac{\Lambda(\theta)\ell^2}{4\tau_\mu^2} + Ce^{-cy} \leq |\phi| \leq \frac{\Lambda(\theta)\ell^2}{4\tau_\mu^2} + Ce^{-cy}
\end{equation}
in $\Sigma\backslash\Sigma_s$. Setting $\psi=\psi_\mu$ to be $|(f_\mu)_{\overline{z}}|/|(f_\mu)_z|$, the formula
\begin{equation}\label{13}
    e(\mu^f,f_\mu)=|\Phi|(|\psi|+|\psi|^{-1})
\end{equation}
 suggests we should turn to $|\psi|$. Using (\ref{14}), we find uniform upper and lower bounds on $|\psi|$, independent of $(\mu,\rho)$. If there are no such bounds, then there is a sequence $\mu_n$ tending to $\mu$ and points $z_n$ with $|\psi_n|(z_n)\to 0$ or $|\psi_n|(z_n)\to \infty$, where $|\psi_n|=L(f_n)^{1/2}/H(f_n)^{1/2}$, for $f_n=f_{\mu_n}$. We can assume each $z_n$ lies in a cylinder of the form $(\Sigma_{r_n}\backslash\Sigma_{r_n-1},\mu)$. Taking $i_n:\mathcal{C}\to (\Sigma_{r_n}\backslash\Sigma_{r_n-1},\mu)$ to be a cylinder embedding, conformal for $\mu$, uniform energy density bounds imply convergence of $F_n=f_n\circ i_n$ along a subsequence to a limiting harmonic map $F_\infty : \mathcal{C}\to (X,\nu)$. From the inequalities (\ref{14}), the Hopf differential is exactly $\Lambda(\theta)\ell^2/4\tau_{\mu_0}^2 dz_{\mu_0}^2$. $F_\infty$ projects onto the geodesic and hence has rank $1$. Thus, there is an $\eta\in\mathbb{R}$ such that $(F_\infty)_*(\partial/\partial x)=\eta (F_\infty)_*(\partial/\partial_y)$. Hence, writing out the Hopf differential in coordinates gives $$\Phi(F_\infty) = \frac{1}{4}(|(F_\infty)_*(\partial/\partial x)|_\nu^2-|(F_\infty)_*(\partial/\partial_y)|_\nu^2 - 2i\langle f_x,f_y\rangle_\nu)=\frac{1}{4}(\eta^2 -1 -2i\eta)dz^2.$$ Therefore, $\eta=\theta$. We thus find from the linear ODE theory that the limit is a constant speed parametrization of the geodesic, composed with a fractional Dehn twist. This implies the limiting quantity $|\psi_\infty|$ is exactly $1$, which contradicts our assumption $|\psi|(z_n)\to 0$ or $\infty$.
 
 From uniform bounds we upgrade to more precise control. Let us temporarily assume $\rho$ is Fuchsian. Working in the region where we have these bounds, because the pullback metric for our harmonic map is hyperbolic, it can be deduced from the Bochner formulae \cite[Chapter 1]{SY} that $$\Delta_{\mu^f} \log |\psi|^{-1}=2|\Phi_\mu|\sinh \log |\psi|^{-1}.$$ Hence, $$\frac{\Lambda(\theta)\ell^2}{4\tau_\mu^2}\log |\psi|^{-1}\leq \Delta_{\mu^f} \log |\psi|^{-1}\leq \frac{\Lambda(\theta)\ell^2}{2\tau_\mu^2}\log |\psi|^{-1}$$ when $|\psi|< 1$, if we are high enough to get uniform control on $|\Phi_\mu|$. If $|\psi|>1$, we have the opposite inequality $$\frac{\Lambda(\theta)\ell^2}{2\tau_\mu^2}\log |\psi|^{-1}\leq \Delta_{\mu^f} \log |\psi|^{-1}\leq \frac{\Lambda(\theta)\ell^2}{4\tau_\mu^2}\log |\psi|^{-1}.$$  Our uniform bounds on $|\psi|$ give control on $\log |\psi|^{-1}$, which yields more bounds of the form $$-\frac{c\Lambda(\theta)\ell^2}{2\tau_\mu^2}\leq \Delta_{\mu^f}\log|\psi|^{-1}\leq \frac{C\Lambda(\theta)\ell^2}{2\tau_\mu^2}.$$ Using the maximum principle, we can then deduce $$-Ce^{-c y} \leq \log |\psi|^{-1}\leq Ce^{-c y}.$$ Taylor expanding $x\mapsto \log(1-x)$, we then obtain $$1-Ce^{-cy} \leq |\psi| \leq 1+Ce^{-cy}.$$ If $\rho$ is not Fuchsian, we apply an argument similar to that of Lemma \ref{asym} to get this same asymptotic. Inserting the bounds into the formula (\ref{13}) gives $$\frac{\Lambda(\theta)\ell^2}{2\tau_\mu^2}-Ce^{-ct}\leq e(\mu^f,f_\mu) \leq \frac{\Lambda(\theta)\ell^2}{2\tau_\mu^2}+Ce^{-ct},$$ as desired. This completes the proof of Lemma \ref{varcusp2}, and moreover the proof of Theorem B.
 \end{subsection}
\end{section}

\begin{section}{Anti-de Sitter 3-manifolds}
After introducing $\AdS$ geometry, we prove Theorem C. 

\begin{subsection}{Models of $\AdS$}
The exposition here is minimal, and for more information we suggest the recent survey \cite{BS}. Denote by $\mathbb{R}^{2,2}$ the real vector space $\mathbb{R}^4$ equipped with the non-degenerate bilinear form $$Q(x,y)=x_1y_1+x_2y_2-x_3y_3-x_4y_4.$$ We define $$\mathbb{H}^{2,1}=\{x\in \mathbb{R}^{2,2}: Q(x,x)=-1\}.$$ The quadric $\mathbb{H}^{2,1}\subset \mathbb{R}^{2,2}$ is a smooth connected submanifold of dimension $3$, and each tangent space $T_x\mathbb{H}^{2,1}$ identifies with the $Q$-orthogonal complement of the linear span of $x$ in $\mathbb{R}^{2,2}$. The restriction of $Q$ to such a tangent space is a non-degenerate bilinear form of signature $(2,1)$, and this induces a Lorentzian metric on $\mathbb{H}^{2,1}$ of constant curvature $-1$ on non-degenerate $2$-planes. $\mathbb{H}^{2,1}$ identifies with the Lorentzian symmetric space $O(2,2)/O(2,1)$, where $O(2,1)$ embeds into $O(2,2)$ as the stabilizer of $(0,0,0,1)$. For our purposes, we mostly work with $SO_0(2,2)$, the space and time orientation preserving component of $O(2,2)$.  

The center of $O(2,2)$ is $\{\pm I\}$, where $I$ is the identity matrix. The Klein model of $\AdS$ is the quotient $$\AdS = \mathbb{H}^{2,1}/\{\pm I\},$$ with the Lorentzian metric induced from $\mathbb{H}^{2,1}$. This also identifies as the space of timelike directions in $\mathbb{R}^{2,2}$, $$\AdS=\{[x]\in\mathbb{RP}^{3}:Q(x,x)<0\}.$$ 
In the introduction, we discussed another model of $\AdS$: the Lie group $\PSL(2,\mathbb{R})$. The determinant form $q=(-\textrm{det})$ defines a signature $(2,1)$ bilinear form on the lie algebra $\mathfrak{sl}(2,\mathbb{R})=T_{[I]}\PSL(2,\mathbb{R})$ (it is a multiple of the Killing form). Translating to each tangent space via the group multiplication, we obtain a Lorentzian metric that is isometric to $\AdS$. The space and time-orientation preserving component of the isometry group is $\PSL(2,\mathbb{R})\times\PSL(2,\mathbb{R})$, acting via the left and right multiplication.

A tangent vector $v\in T_x\mathbb{H}^{2,1}$ is timelike, lightlike, and spacelike if $Q(v,v)<0$, $Q(v,v)=0$, and $Q(v,v)>0$ respectively, and likewise for $\AdS$. The causal character of a geodesic curve is constant, and correspondingly we call geodesics timelike, lightlike, or spacelike if every tangent vector is timelike, lightlike, or spacelike. In the $\PSL(2,\mathbb{R})$ model, timelike geodesics are all of the form $$L_{p,q}=\{X\in\PSL(2,\mathbb{R}): X\cdot p = q\},$$ where $(p,q)$ range over $\mathbb{H}\times \mathbb{H}$. These are topological circles and have Lorentzian length $\pi$.
\end{subsection}

\begin{subsection}{AdS $3$-manifolds with $S^1$-fibrations}
In this subsection we prove Proposition \ref{equiv}, which is actually a quick consequence of the proposition immediately below. We work in the $\PSL(2,\mathbb{R})$ model throughout.
\begin{prop}
Let $V\subset \mathbb{H}$ be a domain. The data of a domain $\Omega\subset \AdS$ and a fibration $\Omega\to V$ such that every fiber is a timelike geodesics is equivalent to that of a domain $V\subset\mathbb{H}$ and a locally strictly contracting map $g:V\to \mathbb{H}.$
\end{prop}
The proof of the first direction of the equivalence is a straightforward adaptation of the procedure from \cite[Proposition 7.2]{GK}. There, $V=\mathbb{H}$, $\Omega=\AdS$, and $h$ is (globally) strictly contracting. We include the proof for the readers convenience.
\begin{proof}
The key fact we use is that timelike geodesics $L_{p_1,q_1}$ and $L_{p_2,q_2}$ intersect if and only if $$d_\sigma(p_1,p_2)=d_\sigma(q_1,q_2).$$ With this in mind, given a locally strictly contracting mapping $g: V\to\mathbb{H}\times \mathbb{H}$ with the properties above, timelike geodesics of the form $L_{p,g(p)}$ and $L_{q,g(q)}$ never intersect. Thus, the geodesics $L_{p,g(p)}$ sweep out a connected set $\Omega\subset \AdS$ as $p$ ranges over $V$. 

We argue that $\Omega$ is open. We record that $X\in L_{p,g(p)}$ if and only if $$X^{-1}\circ g(p) =p.$$ For small $\epsilon>0$, let $B_\epsilon(p)\subset V$ denote the $\epsilon$-ball around $p$ in $\mathbb{H}$. Let $B\subset \AdS$ be the open ball consisting of isometries $Y$ such that $$d_{\sigma}(p,Y^{-1}g(p))< (1-\textrm{Lip}(g|_{B_\epsilon(p)}))\epsilon.$$ Then for any $q\in B_\epsilon(p)$ and $Y\in B$, $$d_{\sigma}(Y^{-1}\circ g(q),p)\leq d(Y^{-1} g(q), Y^{-1}g(p)) + d(Y^{-1} g(p),p)< \epsilon.$$ Thus, $Y^{-1}g$ takes the closure of $B_\epsilon(p)$ to itself, and by the Banach fixed point theorem there is a unique $q\in B_\epsilon(p)$ such that $Y\circ g (q) =q$. So $B\subset \Omega$. This argument also shows that the fibration from $\Omega\to V$ described by $L_{f(p),h(p)}\mapsto p$ is continuous.

For the other direction, any circle fibration $\Omega\to V$ with timelike geodesic fibers determines a map $F:V\to \mathbb{H}\times \mathbb{H}$ by $F(p)=(h(p),f(p))$, where $L_{f(p),h(p)}$ is the geodesic lying over $p$ in $\Omega$. $F$ preserves connectedness---using the product structure, \cite[Theorem 2.2]{JJ} guarantees it is continuous when $f$ is non-constant. If $f$ is a constant $q$, then because $\Omega$ is open, for any $p$ and path from $p$ to $q$, we can find a continuous path of isometries $r\mapsto X_r$ such that $h(r)=X_r^{-1}q$. Thus we have continuity here as well. As the timelike geodesics never intersect, $d(f(p),f(q))\neq d(h(p),h(q))$ for $p\neq q$. As the diagonal in $\mathbb{H}\times \mathbb{H}$ has codimension $2$, a connectedness argument shows $d(f(p),f(q))<d(h(p),h(q))$ or $d(f(p),f(q))>d(h(p),h(q))$ always for $p\neq q$. By switching coordinates, we may assume we have the former. This condition ensures that $h$ is injective. Therefore, $g=f\circ h^{-1}$ is a well-defined locally strictly contracting map.
\end{proof}
Proposition \ref{equiv} is just the equivariant version of this: for a pair $(\rho_1,\rho_2)$ with $\rho_1$ acting properly discontinuously on $V$, we have $$\rho_2(\gamma)L_{p,g(p)}\rho_1(\gamma)^{-1} = L_{\rho_1(\gamma)p,\rho_2(\gamma)g(p)},$$ so $\rho_1\times \rho_2$ acts properly discontinuously on $\Omega$ and equivariance of the fibration is clear. 

It is seen in the proof that $\Omega \subset \AdS$ consists of all isometries $X$ such that $X^{-1}\circ g$ has a fixed point.

\begin{remark}
The results here generalize, almost word for word, for quotients of proper domains in the rank $1$ Lie groups $G=\textrm{O}(n,1)$, $\textrm{SO}(n,1)$, $\textrm{SO}_0(n,1)$, and $\textrm{PO}(n,1)$. One can consider the action by left and right multiplication and equivariant $K$-fibrations $G\supset \Omega\to V\subset\mathbb{H}^n$, $n\geq 2$, where $K\subset G$ is the maximal compact subgroup. Here the fibers are copies of $K$, each of the form $\{X\in G: X\cdot p=q\}$ for some $p,q\in\mathbb{H}^n$.
\end{remark}
\begin{remark}
Proposition \ref{equiv} applies to non-reductive representations. They have been largely excluded from our discussion because harmonic maps and maximal surfaces do not exist for these representations.
\end{remark}

\end{subsection}

\begin{subsection}{Theorem C}
Here we give the proof of Theorem C. We make use of results from the paper \cite{GK}. Fix reductive representations $\rho_1,\rho_2:\Gamma\to\PSL(2,\mathbb{R})$ with $\rho_1$ Fuchsian. 
\begin{defn}
Let $V\subset \mathbb{H}$ be a $\rho_1$-invariant domain, and $f:V\to \mathbb{H}$ a $(\rho_1,\rho_2)$-equivariant map realizing the minimal Lipschitz constant $L$ among equivariant maps. The stretch locus is the set of points $x\in \mathbb{H}$ such that the restriction of $f$ to any neighbourhood of $x$ has Lipschitz constant exactly $L$ and no smaller.
\end{defn}
The result below is culled from \cite[Theorem 1.3 and 5.1]{GK}. See the reference for more general statements and details.
\begin{thm}[Gu{\'e}ritaud-Kassel]\label{maxstretch}
Assume there exists a $(\rho_1,\rho_2)$-equivariant map with minimal Lipschitz constant $L=1$, and let $E$ be the intersection of all the stretch loci among such maps. Then there exists an ``optimal" $(\rho_1,\rho_2)$-equivariant $1$-Lipschitz map whose stretch locus is exactly $E$. $E$ projects under the action of $\rho_1(\Gamma)$ to the convex core for $\rho_1$, and is either empty or the union of a lamination and $2$-dimensional convex sets with extremal points only in the limit set $\Lambda_{\rho_1(\Gamma)}\subset \partial_\infty\mathbb{H}$.
\end{thm}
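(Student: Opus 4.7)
The plan is to proceed in three stages, roughly matching the three assertions of the theorem. Throughout, one exploits the $\mathrm{CAT}(0)$ (in fact $\mathrm{CAT}(-1)$) structure of $\mathbb{H}$ to combine equivariant Lipschitz maps and to analyze maximally stretched directions.

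First, I would establish existence of an optimal map whose stretch locus is exactly $E$. The key input is a barycenter construction: if $f_1,f_2:\mathbb{H}\to\mathbb{H}$ are $(\rho_1,\rho_2)$-equivariant and $1$-Lipschitz, then the map $m(x) = \mathrm{midpoint}(f_1(x),f_2(x))$ is again $(\rho_1,\rho_2)$-equivariant and $1$-Lipschitz (since $\rho_2(\Gamma)$ acts by isometries and the distance function on $\mathbb{H}$ is convex along geodesics). Moreover, at any pair $(x,y)$ where either $f_1$ or $f_2$ is \emph{strictly} contracting, the CAT$(-1)$ comparison forces $m$ to be strictly contracting as well, so the stretch locus of $m$ is contained in the intersection of the stretch loci of $f_1$ and $f_2$. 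Now choose a countable dense family of points $(x_n)$ in $\mathbb{H}\setminus E$; by definition of $E$ we can pick for each $n$ a $1$-Lipschitz equivariant $f_n$ that is strictly contracting near $x_n$. Iterating the barycenter construction and extracting an Arzel\`a--Ascoli limit (uniform Lipschitz bounds and the standard equivariant normalization yield equicontinuity plus local boundedness) produces a $1$-Lipschitz equivariant map whose stretch locus is exactly $E$.

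Second, I would show $E$ projects into the convex core of $\mathbb{H}/\rho_1(\Gamma)$. Let $\pi:\mathbb{H}\to\tilde C_{\rho_1}$ denote the nearest-point projection onto the convex hull of the limit set; it is $\rho_1(\Gamma)$-equivariant and $1$-Lipschitz, and it is \emph{strictly} contracting along any pair of points not both lying in $\tilde C_{\rho_1}$ together with the tangential direction to $\partial \tilde C_{\rho_1}$. If $f$ is any optimal map, replacing it by $f\circ \pi$ remains $(\rho_1,\rho_2)$-equivariant and $1$-Lipschitz, while strictly decreasing distances at points outside $\tilde C_{\rho_1}$. Thus any point of $E$ outside $\tilde C_{\rho_1}$ would violate the intersection-of-stretch-loci definition of $E$, giving the claimed projection property.

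Third, and this is where the main work lies, I would analyze the local structure of $E$ to recover a lamination plus $2$-dimensional convex pieces. At a point $x\in E$, pick sequences $(y_k,z_k)\to(x,x)$ of distinct pairs with $d(f(y_k),f(z_k))/d(y_k,z_k)\to 1$; these exist by the very definition of stretch locus at the optimal constant $1$. The CAT$(-1)$ rigidity of geodesics in $\mathbb{H}$ forces the directions $[y_k,z_k]$ to accumulate on a genuine geodesic direction at $x$, and in fact the segments $[y_k,z_k]$ subconverge to a bi-infinite geodesic $\gamma_x$ through $x$ contained entirely in $E$ (otherwise one could again use the barycenter trick with a strictly contracting perturbation along $\gamma_x$). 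The endpoints of $\gamma_x$ must lie in $\Lambda_{\rho_1(\Gamma)}$, for if an endpoint were in the ordinary set, one could compose with a small equivariant isotopy supported near a fundamental domain for the action on the complementary funnel/cusp to strictly contract near that endpoint. If at $x$ only one such direction exists, $\gamma_x$ is a leaf of a lamination (disjointness of leaves follows from the fact that two distinct optimal geodesics through nearby points would force a strict contraction along the transverse direction, again via barycenters). If two transverse maximally stretched directions exist at $x$, the geodesic hull of all such directions at $x$ lies in $E$ and is a convex subset whose extremal points, by the endpoint analysis above, lie in $\Lambda_{\rho_1(\Gamma)}$.

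The main obstacle is the last step: upgrading the existence of maximally stretched directions at each point of $E$ to an honest integrated geometric structure (lamination plus convex pieces). This requires precise control over how infinitesimal stretching directions fit together under the CAT$(-1)$ geometry, and the standard way to extract this is via a careful tangent-cone analysis combined with repeated barycenter perturbations in directions transverse to candidate leaves.
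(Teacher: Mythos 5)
This statement is not proved in the paper at all: it is quoted verbatim (in slightly specialized form) from Gu\'eritaud--Kassel, and the paper simply cites \cite[Theorems 1.3 and 5.1]{GK}. So the benchmark for your proposal is their proof, and your outline does follow the same broad strategy they use: barycentric averaging of $1$-Lipschitz equivariant maps, reduction to the convex hull of the limit set via the nearest-point projection, and a local analysis of maximally stretched directions. Your stage two is essentially correct as stated. But two genuine gaps remain. In stage one, iterating midpoint maps and passing to an Arzel\`a--Ascoli limit does not obviously give a map whose stretch locus is exactly $E$: the strict local contraction near $x_n$ obtained at a finite stage can degrade to local Lipschitz constant $1$ in the limit. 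The standard repair is a single countable weighted barycenter, say with weight $2^{-n}$ on $f_n$, so that convexity of the distance function yields a definite local contraction factor $1-2^{-n}\epsilon_n$ near $x_n$ that survives; one also has to justify that countably many maps suffice to realize $E$ as an intersection (separability plus upper semicontinuity of the local Lipschitz constant), and that the countable barycenter is well defined and equivariant.

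The more serious gap is stage three, which is where almost all of the content of the cited theorem lives, and which your sketch asserts rather than proves. One must show that at each point of $E$ the map restricts to an actual isometry on some segment, that such segments propagate to complete geodesics contained in $E$ with both endpoints in $\Lambda_{\rho_1(\Gamma)}$, and that the resulting set decomposes as a geodesic lamination together with $2$-dimensional convex pieces on which the optimal map is a local isometry. The appearance of $2$-dimensional pieces is special to the case $L=1$ (for $L>1$ the stretch locus is a lamination), so the dichotomy you propose --- disjoint leaves versus a convex hull of transverse directions --- is exactly the delicate point, and the barycenter trick alone does not deliver it; in \cite{GK} this occupies the bulk of Sections 4--5. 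Likewise, your argument that endpoints lie in the limit set (``a small equivariant isotopy supported near a fundamental domain for the funnel/cusp'') is not an argument as stated: the actual mechanism is that a maximally stretched ray exiting the convex core, or going arbitrarily far up a cusp, permits a strict improvement via the projection/averaging machinery, and the cusp case requires controlling the image under $\rho_2$ of the corresponding peripheral element. You correctly flag this as the main obstacle, but as it stands the proposal is an outline of the Gu\'eritaud--Kassel strategy rather than a proof of the structural statement.
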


\begin{proof}[Proof of Theorem C]
The equivalence between (1) and (3) is contained in Theorem A. Assuming (1) we prove (2). Take any optimal map $g$, and the map $\tilde{C}(\mathbb{H}/\rho_1(\Gamma))\to\mathbb{H}\times \mathbb{H}$ given by $p\mapsto (p,g(p))$. In the case that there exists a peripheral on which $\rho_1$ is hyperbolic, suppose for the sake of contradiction that there is a choice of $g$ so that the domain extends to give a fibration over a larger subsurface. From the other direction of Proposition \ref{equiv}, we obtain a $(\rho_1,\rho_2)$-equivariant and a locally contracting map defined on the preimage of this subsurface in the universal cover. This implies there is a peripheral $\gamma$ with $\ell(\rho_1(\gamma))>\ell(\rho_2(\gamma))$, which contradicts our original Definition \ref{asd}.

Now we prove that (2) implies (1).  Given such a domain and fibration, from Proposition \ref{equiv} we obtain a strictly $1$-Lipschitz $(\rho_1,\rho_2)$-equivariant map defined on $\tilde{C}(\mathbb{H}/\rho_1(\Gamma))$. If $\rho_1$ has no hyperbolic peripherals, then we get (1) for free and we're done. So assume there is a peripheral $\zeta$ with $\rho_1(\zeta)$ hyperbolic. Any $1$-Lipschitz map $g$ defined inside $\tilde{C}(\mathbb{H}/\rho_1(\Gamma))$ extends to a $1$-Lipschitz map of the frontier inside $\mathbb{H}$, and hence $$\ell(\rho_2(\zeta))\leq \ell(\rho_1(\zeta)).$$ We extend $g$ to all of $\mathbb{H}$ by precomposing with the $1$-Lipschitz $(\rho_1,\rho_1)$-equivariant nearest point projection onto $\tilde{C}(\mathbb{H}/\rho_1(\Gamma))$, so we know that the set of globally defined Lipschitz maps is non-empty. From Lemma 4.10 in \cite{GK} (an application of Arz{\`e}la-Ascoli), there exists an optimal $(\rho_1,\rho_2)$-equivariant Lipschitz map $g'$. As for the optimal Lipschitz constant, $g$ shows $L\leq 1$, and if $L<1$ then $\rho_1\times\rho_2$ acts properly discontinuously on the whole $\AdS$, and hence $L=1$. Applying Theorem \ref{maxstretch}, we have a stretch locus $E$. 

$E$ is contained in the intersection of the stretch loci of $g$ and $g'$. Since $g$ does not maximally stretch in the interior of $\tilde{C}(\mathbb{H}/\rho_1(\Gamma))$, $E$ is contained in the boundary of $\tilde{C}(\mathbb{H}/\rho_1(\Gamma))$. If $E$ is missing the lifts of one boundary component of $\mathbb{C}(\mathbb{H}/\rho_1(\Gamma))$, then $g'$ is strictly contracting inside the half-spaces in $\mathbb{H}$ that project to the infinite funnel bounding this component in $\mathbb{H}/\rho_1(\Gamma)$. From Proposition \ref{equiv}, we can thus find a $\rho_1\times\rho_2$-equivariant domain that yields a fibration onto the union of the convex core with this funnel, which contradicts our standing assumption. We conclude that the stretch locus is exactly these components, and hence $g'$ is an almost strictly dominating map.

For the final statement, we use the homeomorphism $\Psi=\Psi^0$ from Theorem B to parametrize the space of representations. We take the domains in $\AdS$ associated to the spacelike maximal immersions with $0$ twist parameter (any one will do). The energy domination implies that they yield proper quotients by Proposition \ref{equiv}. Since the harmonic maps for irreducible representations vary continuously with the representation (Section 5.2), so do the domains in $\AdS$. Hence, when we restrict to these classes, $\Psi$ parametrizes a deformation space of AdS $3$-manifolds.
\end{proof}

To produce more representations that give such incomplete quotients, take an almost strictly dominating pair $(\rho_1,\rho_2)$ (Theorem B shows there are many) and an optimal map $g$. To relax the condition that all boundary lengths agree, first choose a collection of peripherals, but not all of them. For each of the selected peripherals, there is a geodesic or a horocycle in $\mathbb{H}/\rho_1(\Gamma)$. We then specify a transversely intersecting geodesic arc that does not intersect any other peripheral geodesic or horocycle, and apply strip deformations to $\mathbb{H}/\rho_1(\Gamma)$ along these arcs (see \cite{DGK1}, \cite[Section 6]{S}). This gives a new hyperbolic surface whose holonomy is a Fuchsian representation $j$, and for some $\lambda<1$, a strictly $\lambda$-Lipschitz $(j,\rho_1)$-equivariant map $g'$. We can extend $g$ outside of the convex hull of the limit set by using the $1$-Lipschitz $(\rho_1,\rho_1)$-equivariant closest-point projection. We then take the composition $g\circ g'$ and the corresponding circle bundle.

With the main theorems complete, we briefly digress to discuss the topology of the quotients. The quotients naturally acquire an orientation. Since the surface is not compact, the bundle is topologically trivial: $\textrm{BU}(1)=\mathbb{CP}^\infty$ and $[\Sigma,\mathbb{CP}^\infty]=H^2(\Sigma,\mathbb{Z})=0$.

However, the global trivialization is by no means compatible with the AdS structure. To be precise, the $3$-manifold is not ``standard" in the sense of \cite{KR}: its casual double cover does not possess a timelike Killing field. If it did, the holonomy would normalize the isometric flow generated by the Killing field, and it follows from \cite[pages 237-238]{KR} that this is impossible for reductive $\rho_2$.
\end{subsection}

\begin{subsection}{Parabolic Higgs bundles}
In \cite{AL}, Alessandrini and Li use Higgs bundles to construct AdS structures on closed $3$-manifolds. They build circle bundles explicitly, rather than first passing through Theorem 1.3. In \cite[Section 7]{S}, we followed their work closely to construct AdS structures on $3$-manifolds, which we now recognize as the same structures from Theorem C.

Since it would take us really far afield, we do not repeat the construction here. We'll just make some comments. More details and justifications for the discussion are contained in the author's PhD thesis.

As we are working over surfaces with punctures, one should use parabolic Higgs bundles (see \cite{Si}). An equivariant harmonic map gives a Higgs bundle, and the residue of the Hopf differential is encoded in the residue of the Higgs field (see \cite[page 117]{Si}). 
\begin{remark}
The Higgs bundle construction is actually more convenient if one uses the double cover of $\AdS$ as the model space. This introduces a few subtleties that we ignore below.
\end{remark}
In \cite[Section 3.5]{B}, Baraglia finds a correspondence between real projective structures on circle bundles over surfaces and equivariant maps into Grassmanians. In the case of a circle bundle with an $\textrm{AdS}^3$ structure--equivalently a real projective structure whose developing map image lies in the Klein model of $\AdS$--the equivariant map takes its image in the space of timelike $2$-planes of $\mathbb{R}^4$, otherwise known as the timelike Grassmanian $\textrm{Gr}^t(2,4).$ Equipped with its pseudo-Riemannian metric obtained through the Pl{\"u}cker embedding (see \cite[Section 7]{AL}, for instance), $\textrm{Gr}^t(2,4)$ identifies isometrically with $(\mathbb{H}\times \mathbb{H},\sigma\oplus(-\sigma)).$ Under this identification, Baraglia's construction can actually be translated to a different proof of Proposition 6.1. 

For the parabolic Higgs bundles used in the construction from \cite{S}, the equivariant mapping to $\textrm{Gr}^t(2,4)$ identifies with none other than the maximal surface from Theorem A. This does require a proof, and does not follow immediately from work contained in \cite{AL}, but it is a basic Higgs bundle computation that we omit here. When we build the circle bundles with AdS structures using Higgs bundles, the fibration in \cite{AL} is seen (via the map to the Grassmanian) to be the same as that of \cite{DT}, and the one in our previous work \cite{S} is the fibration from Theorem C.

Following the local computations from \cite{AL}, the Higgs bundles can be used to write down the AdS metric $g=(g_{ij})$ in terms of the harmonic maps data, and also to compute the volume. Explicitly, one arrives at the formula
\begin{equation}\label{volume}
    \textrm{Vol}(g_{ij})=  \pi\int_\Sigma (J(h) + J(f))dx\wedge dy,
\end{equation}
 where $h$ and $f$ are the associated harmonic maps. Since infinite energy harmonic maps converge exponentially to projections onto a geodesic, both of these Jacobians can be integrated over the surface. They also satisfy the requirements from \cite{KM}, so that the integral is the volume of the representation, depending only on the representation and computed via the relative Euler number and the total rotation (see \cite{BIW} for details).
When the $\AdS$ geometric structure is complete, this recovers Tholozan's formula for the volume of $\AdS$ quotients (see \cite[Theorem 1]{ThV}).  We leave it to the interested reader to check, but it turns out that we can also arrive at the formula (\ref{volume}) above by imitating Tholozan's integration over the fibers in \cite{ThV}.

\end{subsection}

\end{section}

\bibliographystyle{plain}
\bibliography{bibliography}

\end{document}